\providecommand{\R}{\mathbb{R}}
\providecommand{\Z}{\mathbb{Z}}
\providecommand{\N}{\mathbb{N}}
\providecommand{\B}{\mathbb{B}}
\providecommand{\K}{\mathbb{K}}
\providecommand{\Sp}{\mathbb{S}}
\providecommand{\dih}{\mathbb{D}}
\providecommand{\apr}{\mathbb{A}}
\providecommand{\pri}{\mathbb{P}}
\providecommand{\pyr}{\mathbb{Y}}
\providecommand{\st}{\, :\ }
\providecommand{\refl}{\textsf{R}} 
\providecommand{\hsd}{\mathscr{H}}
\providecommand{\gsum}{\mathfrak{g}}  
\providecommand{\bsum}{\mathfrak{b}} 
\providecommand{\compgamma}{K}
\providecommand{\FKS}{\Theta}
\providecommand{\KKMS}{\Sigma^{\mathrm{KKMS}}} 
\newcommand{\vard}{\ensuremath{\boldsymbol{\mathrm{F}}}}
\newcommand{\gelem}{\ensuremath{h}}
\DeclarePairedDelimiter\abs{\lvert}{\rvert}
\DeclarePairedDelimiter\nm{\lVert}{\rVert}
\DeclarePairedDelimiter\sk{\langle}{\rangle}
\DeclarePairedDelimiter\interval{]}{[}
\DeclarePairedDelimiter\Interval{[}{[}
\DeclarePairedDelimiter\intervaL{]}{]}
\DeclareMathOperator{\Ogroup}{O}
\DeclareMathOperator{\ind}{ind}
\DeclareMathOperator{\II}{I\hspace*{-.5pt}I}
\DeclareMathOperator{\Ric}{Ric}
\pgfmathsetmacro{\picscale}{0.39}
\pgfmathsetmacro{\unitscale}{\picscale*\textwidth/2cm}
\providecommand{\FBMS}[2][\picscale]{\draw(0,0,0)node[inner sep=0]{\includegraphics[width=#1\textwidth]{#2}};}
\pgfmathsetmacro{\thetaO}{72}
\pgfmathsetmacro{\phiO}{90-45/2}
\newenvironment{unitball}[1][\picscale]{%
\tdplotsetmaincoords{\thetaO}{\phiO}
\begin{tikzpicture}[scale={#1*\textwidth/2cm},tdplot_main_coords,line cap=round,line join=round,semithick,baseline={(0,0,0)}]
\pgfmathsetmacro{\laengeA}{-135-45/2+\phiO}
\pgfmathsetmacro{\laengeB}{\laengeA+45}
\begin{scope}[black!30]
\path[tdplot_screen_coords,inner color=white,outer color=cyan!5!black!20](0,0)circle(1);
\foreach\laengengrad in {\laengeA,\laengeB,...,\phiO}{
\tdplotsetthetaplanecoords{\laengengrad}
\pgfmathsetmacro{\start}{-atan( 1/(tan(\thetaO)*( cos(\laengengrad)*sin(\phiO)-cos(\phiO)*sin(\laengengrad))))}
\tdplotdrawarc[tdplot_rotated_coords]{(0,0,0)}{1}{180+\start}{360+\start}{}{}
}
\foreach\breitengrad in {-60,-30,...,60}{
\pgfmathsetmacro{\breitenkreisradius}{cos(\breitengrad)}
\pgfmathsetmacro{\hoehe}{sin(\breitengrad)}\tdplotdrawarc{(0,0,\hoehe)}{\breitenkreisradius}{0}{360}{}{}
}
\end{scope}
}{\begin{scope}[black]
\foreach\laengengrad in {\laengeA,\laengeB,...,\phiO}{
\tdplotsetthetaplanecoords{\laengengrad}
\pgfmathsetmacro{\start}{-atan( 1/(tan(\thetaO)*( cos(\laengengrad)*sin(\phiO)-cos(\phiO)*sin(\laengengrad))))}
\tdplotdrawarc[tdplot_rotated_coords]{(0,0,0)}{1}{\start}{180+\start}{}{}
} 
\foreach\breitengrad in {-60,-30,...,60}{
\pgfmathsetmacro{\breitenkreisradius}{1*cos(\breitengrad)}
\pgfmathsetmacro{\hoehe}{1*sin(\breitengrad)}
\pgfmathsetmacro{\visiblerange}{acos(-sign(\breitengrad)*min(1,abs(tan(\breitengrad)*tan(90-\thetaO)))  )}
\ifdim\visiblerange pt>0pt
\tdplotdrawarc{(0,0,\hoehe)}{\breitenkreisradius}{\phiO-90+\visiblerange}{\phiO-90-\visiblerange}{}{}
\fi
}
\draw[tdplot_screen_coords] (0,0) circle (1);
\end{scope}%
\end{tikzpicture}%
}
\tikzset{
    scale plot marks/.is choice,
    scale plot marks/true/.style={},	
    scale plot marks/false/.code={
        \def\pgfuseplotmark##1{\pgftransformresetnontranslations\csname pgf@plot@mark@##1\endcsname}
    },
every mark/.append style={scale plot marks=false},
plus/.style={mark=+,mark size=2.25pt},
vdash/.style={mark=|,mark size=2.25pt},
hdash/.style={mark=-,mark size=2.25pt},
bullet/.style={mark=*,mark size=1.125pt},
}
\theoremstyle{plain}
\newtheorem{theorem}{Theorem}[section]
\newtheorem{lemma}[theorem]{Lemma}
\newtheorem{proposition}[theorem]{Proposition}
\theoremstyle{definition}
\newtheorem{definition}[theorem]{Definition}
\theoremstyle{remark}
\newtheorem{remark}[theorem]{Remark}
\title{Genus one critical catenoid}
\author{Giada Franz, Daniel Ketover and Mario B. Schulz}
\date{\vspace*{-3ex}}
\newcommand\printaddress{{
\setlength{\parindent}{17pt}
\small
\smallskip
\hfill September 2024
\par
{\scshape Giada Franz}
\newline
MIT, Department of Mathematics, 77 Massachusetts Avenue, Cambridge, MA 02139, USA
\newline
\textit{E-mail address:} 
\texttt{gfranz@mit.edu}
\par\medskip
{\scshape Daniel Ketover}
\newline 
Rutgers University, Busch Campus -- Hill Center, 110 Freylinghausen Road, Piscataway, NJ 08854, USA
\newline
\textit{E-mail address:} 
\texttt{dk927@math.rutgers.edu}
\par\medskip
{\scshape Mario B. Schulz}
\newline 
Università di Trento, 
Dipartimento di Matematica, 
via Sommarive 14, 
38123 Povo di Trento, 
Italy 
\newline
\textit{E-mail address:} 
\texttt{mario.schulz@unitn.it}
\par
}} 
\begin{document}

\maketitle

\begin{abstract}
We use variational methods to construct a free boundary minimal surface in the three-dimensional unit ball with genus one, two boundary components and prismatic symmetry. 
Key ingredients are an extension of the equivariant min-max theory to include orientation-reversing isometries and the discovery of a nontrivial two-parameter sweepout.
\end{abstract}

\section{Introduction}

The study of minimal surfaces in a given three-dimensional ambient manifold has long been a central theme in differential geometry. 
In the case where the ambient manifold $M$ has nonempty boundary $\partial M$, it is natural to study minimal surfaces with free boundary, i.\,e.~ciritical points for the area functional among all surfaces $\Sigma\subset M$ with boundary $\partial\Sigma$ constraint to $\partial M$. 
Equivalently, a free boundary minimal surface has vanishing mean curvature while meeting $\partial M$ orthogonally.  
In convex ambient manifolds with nonnegative Ricci curvature, free boundary minimal surfaces are necessarily unstable, which complicates their construction. 
The three-dimensional Euclidean unit ball $\B^3$ is a particularly interesting ambient manifold of this type, because this setting is closely related to the optimization problem for the first Steklov eigenvalue on surfaces with boundary \cite{FraserSchoen2011,FraserSchoen2016,KarpukhinKokarevPolterovich2014,GirouardLagace2021,KarpukhinStern2024}. 
Recently, Karpukhin, Kusner, McGrath and Stern \cite[Theorem~1.2]{KarpukhinKusnerMcGrathStern2024} developed equivariant Steklov eigenvalue optimization methods to prove that 
any compact, orientable surface with boundary can be realised as an embedded free boundary minimal surface with area strictly below $2\pi$ in $\B^3$. 
Other methodologies used to establish existence results include gluing methods and min-max theory, with the latter being the main focus of this article.

Gluing methods have been employed to construct solutions with large topological complexity \cite{FolhaPacardZolotareva2017,KapouleasLi2017,KapouleasMcGrath2023,KapouleasWiygul2017,KapouleasZou2021}, including 
pairs of solutions with the same topology and symmetry group \cite{CarlottoSchulzWiygul2022}, and families of solutions with unbounded area \cite{CSWstackings}. 

Min-max methods on the other hand allow the construction of free boundary minimal surfaces with low topological complexity in various ambient manifolds, not limited to Euclidean balls \cite{GruterJost1986,Ketover2016FBMS,Li2015,CarlottoFranzSchulz2022,FranzSchulz2023,HaslhoferKetover,SchulzEllipsoids}. 
Min-max theory has been pioneered by Almgren--Pitts \cite{Almgren1965, Pitts1981}, Marques--Neves \cite{MarquesNeves2014, MarquesNeves2017},  Simon--Smith \cite{Smith1982} and Colding--De Lellis \cite{ColdingDeLellis2003}, and was adapted to the free boundary setting by Li \cite{Li2015}. 
The equivariant version of min-max theory has been developed in \cite{Ketover2016Equivariant,Ketover2016FBMS,FranzSchulz2023} for the actions of finite groups of orientation-preserving ambient isometries.   
In Theorem~\ref{thm:EquivMinMax}, we extend the theory allowing for arbitrary finite group actions, including those generated by (orientation-reversing) reflections.
 
Combining this extension with a two-parameter min-max scheme we obtain the existence of a ``genus one critical catenoid'' $\FKS\subset\B^3$. 
The surface $\Theta$ is preserved by the prismatic group $\pri_2$ of order $8$ (generated by the reflective symmetries with respect to the three coordinate planes $\{x_1=0\}$, $\{x_2=0\}$, $\{x_3=0\}$, cf.~Remark~\ref{rem:prismatic}). 
The following is our main result:

\begin{figure}
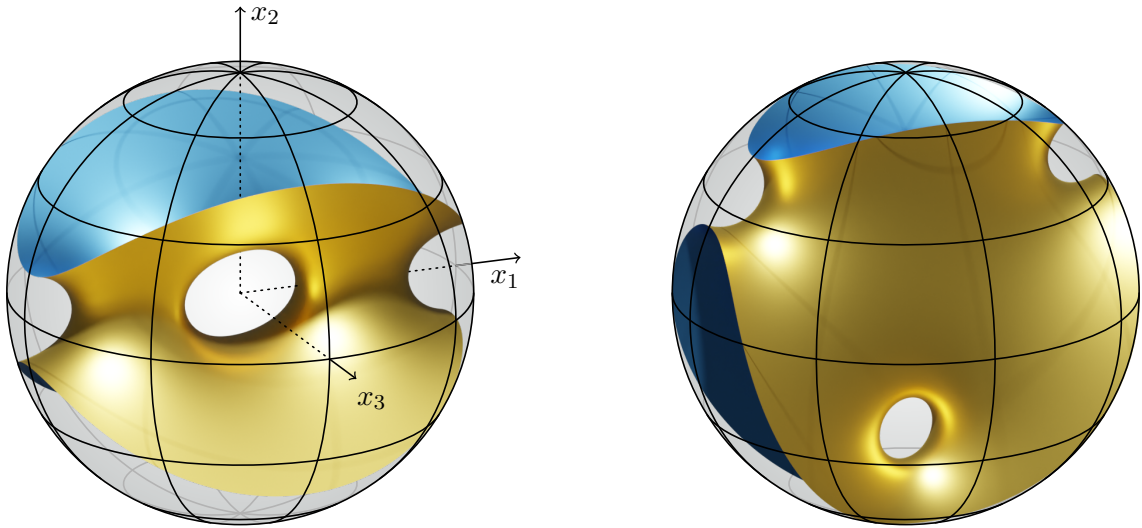
%
\begin{unitball}
\draw[dotted]
(0,0,0)--(0,0,0.209594853515861)
(0,0.269050102990999,0)--(0,1,0)
;
\FBMS{figures/g1b2-P2}
\draw[dotted]
(0,0,0)--(1,0,0)
(0,0,0)--(0,0.269050102990999,0)
(0,0,0.44)--(0,0,1);
\draw[->](1,0,0)--++(0.3,0,0)node[below right={1ex and 0},inner sep=0]{$x_3$};
\draw[->](0,1,0)--++(0,0.3,0)node[below left={1ex and 0},inner sep=0]{$x_1$};
\draw[->](0,0,1)--++(0,0,0.3)node[below right,inner sep=0]{~$x_2$};
\begin{scope}[black!40]
\clip[tdplot_screen_coords](0,0)circle(1);
\draw[->](0,1,0)--++(0,0.3,0);
\end{scope}
\end{unitball}
\hfill
\begin{unitball}
\FBMS{figures/g1b2-Y2}
\end{unitball}
\caption{Left image: Simulation of a $\pri_2$-equivariant genus one catenoid in the unit ball $\B^3$. 
Right image: Less symmetric solution with the same topology but with more area.}%
\label{fig:g1b2}%
\end{figure}

\begin{theorem}\label{thm:ExistenceGenusOneCatenoid}
The Euclidean unit ball $\B^3$ contains an embedded, $\pri_2$-equivariant free boundary minimal surface $\FKS$ with the following properties.
\begin{enumerate}[label={\normalfont(\roman*)},nosep]
\item $\FKS$ has genus one and two boundary components. 
\item The $\pri_2$-equivariant index of $\FKS$ is equal to $2$. 
\item The area of $\FKS$ is strictly greater than that of the critical catenoid and strictly less than $2\pi$.
\item The first nonzero Steklov eigenvalue on $\FKS$ is equal to $1$.
\end{enumerate}
\end{theorem}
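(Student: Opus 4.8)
The plan is to apply the extended equivariant min-max theorem (Theorem~\ref{thm:EquivMinMax}) to a carefully designed two-parameter family of $\pri_2$-equivariant surfaces with free boundary on $\partial\B^3$. First I would construct a continuous two-parameter sweepout $\{\Sigma_v\}_{v\in[0,1]^2}$ by such surfaces so that: the slices over $\partial([0,1]^2)$ are degenerate or of small area (pairs of disks, arcs, or the empty set near the corners); the family is nontrivial in the relevant relative homotopy class, so that its two-parameter width $W$ is positive and not attained on any one-parameter subfamily of the boundary; and every slice has area strictly below $2\pi$, the area of two equatorial disks, which is the natural barrier for configurations with two boundary components. Theorem~\ref{thm:EquivMinMax} then yields a smooth, properly embedded, $\pri_2$-equivariant free boundary minimal surface $\FKS\subset\B^3$, a priori with integer multiplicity, with area counted with multiplicity equal to $W$ and with $\pri_2$-equivariant index at most $2$. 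In particular $\mathrm{area}(\FKS)\le W<2\pi$, giving the upper bound in~(iii) and the upper bound in~(ii).

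Next I would identify the topology and multiplicity of $\FKS$. The genus control built into the equivariant Simon--Smith scheme bounds the genus of the limit, plus that of any neck regions lost in the degeneration, by that of the slices; combined with the $\pri_2$-symmetry and an Euler-characteristic count on the quotient $\FKS/\pri_2$, this caps the genus at one and the number of boundary components at two. To exclude lower complexity and higher multiplicity I would use that $W$ strictly exceeds the area of the critical catenoid---which I would obtain by locating inside the chosen family a one-parameter subfamily whose one-parameter equivariant width equals the area of the critical catenoid and checking that $W$ strictly dominates it---since an equatorial disk, a critical catenoid, a genus-zero surface with two boundary components, or any multiplicity-$k$ limit with $k\ge 2$ would have area at most that of the critical catenoid. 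Hence $\FKS$ has multiplicity one, genus exactly one, exactly two boundary components, and $\mathrm{area}(\FKS)=W\in\bigl(\mathrm{area}(\text{critical catenoid}),2\pi\bigr)$, which proves~(i) and~(iii).

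For the index lower bound in~(ii): convexity of $\B^3$ and nonnegativity of its Ricci curvature force $\FKS$ to be unstable, so $\ind_{\pri_2}(\FKS)\ge 1$; and if the equivariant index were exactly $1$, a Lusternik--Schnirelmann deformation argument would realise $\FKS$ by a one-parameter equivariant min-max of width at most $\mathrm{area}(\FKS)=W$, contradicting that $W$ strictly exceeds the relevant one-parameter equivariant width. Therefore $\ind_{\pri_2}(\FKS)=2$. For~(iv), the restrictions $x_1|_{\FKS}$, $x_2|_{\FKS}$, $x_3|_{\FKS}$ are harmonic on $\FKS$ and, since $\FKS$ meets $\partial\B^3$ orthogonally, satisfy $\partial_\nu x_i=x_i$ along $\partial\FKS$; hence they are Steklov eigenfunctions of eigenvalue $1$, and by the $\pri_2$-symmetry they have vanishing boundary average, so $\sigma_1(\FKS)\le 1$. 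Conversely, a Steklov eigenfunction $u$ of eigenvalue $\sigma$ satisfies $Q(u,u)=(\sigma-1)\int_{\partial\FKS}u^2-\int_{\FKS}\abs{A}^2 u^2$ for the free boundary Jacobi quadratic form $Q$, so any eigenvalue $\sigma<1$ would create negative directions; a bookkeeping of these across the $\pri_2$-isotypic components---using the index in each component, in particular the coordinate functions $x_i$ themselves occupying the three coordinate-character components, and the constraint $\ind_{\pri_2}(\FKS)=2$---leaves no room for a Steklov eigenvalue below $1$, so $\sigma_1(\FKS)=1$.

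I expect the main obstacle to be the construction of the two-parameter sweepout together with the two-sided area estimate. One must arrange simultaneously that the family is homotopically nontrivial (so that $W>0$ and the min-max limit is a genuine surface rather than a point or a flat piece), that every slice has area strictly below the barrier $2\pi$, and that the family is tight enough to force the limit to have genus exactly one, two boundary components and multiplicity one. Equivalently, the crux is certifying that $W$ lies strictly inside $\bigl(\mathrm{area}(\text{critical catenoid}),2\pi\bigr)$: the strict lower bound is what distinguishes $\FKS$ from the critical catenoid and pins down the equivariant index, while the strict upper bound keeps $\FKS$ below the flat-disk threshold. The remaining delicate points are the topological control of the equivariant min-max limit when orientation-reversing symmetries are present---needed for multiplicity one and the exact genus---and the isotypic-component analysis of the Morse index that underlies the Steklov statement~(iv).
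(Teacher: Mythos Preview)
Your overall plan---two-parameter $\pri_2$-sweepout plus Theorem~\ref{thm:EquivMinMax}---matches the paper, and you correctly identify the strict inequality $\hsd^2(\K)<W_\Pi<2\pi$ as the crux. But the logical structure of your argument for this inequality, and for the equivariant index, is inverted relative to what actually works.

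In the paper, the statement ``any embedded $\pri_2$-equivariant free boundary minimal surface of genus one with two boundary components has $\pri_2$-equivariant index at least $2$'' (Proposition~\ref{prop:EquivIndexMoreThan2}) is proved \emph{independently} by a direct PDE argument: the function $x^\perp=\langle x,\nu\rangle$ solves $\mathcal{L}_\Sigma x^\perp=0$ with Dirichlet data, and on the fundamental octant it has at least two nodal domains, so by Courant the second Dirichlet eigenvalue is $\leq 0$, which forces the second Robin eigenvalue to be $<0$. This a~priori index bound is then \emph{used as input} in the Lusternik--Schnirelmann argument (Lemma~\ref{lem:width-2}) to rule out the Case~1 alternative $W_{\Pi_2}<\hsd^2(\K)$: a one-parameter min-max there would produce a surface with $\pri_2$-index $\leq 1$, and the only topological type not excluded by Nitsche, McGrath, and the parity Lemma~\ref{lem:boundary} is genus one with two boundaries---contradicting Proposition~\ref{prop:EquivIndexMoreThan2}. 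You instead assume $W>\hsd^2(\K)$ first and try to deduce index $\geq 2$ by claiming that an index-$1$ surface would be realised by a one-parameter min-max of width $<W$; but nothing prevents that one-parameter width from equalling $W$, so your argument does not close. Without the independent index lower bound you also have no mechanism to prove the strict inequality $W>\hsd^2(\K)$ in the first place.

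Two further gaps. First, your topological exclusion list omits $(\gsum,\bsum)=(1,0)$ and $(0,2)$; these are ruled out by the parity argument (Lemma~\ref{lem:boundary}: a $\pri_2$-equivariant surface other than a flat disc has an even number of boundary components), which you should state explicitly. Second, your Steklov argument does not work: knowing $\ind_{\pri_2}=2$ constrains only one isotypic component, while a hypothetical eigenfunction with $\sigma<1$ could live in any component, so the ``bookkeeping'' you describe does not produce a contradiction. The paper instead invokes McGrath's theorem (\cite{McGrath2018}*{Theorem~4.2}), after verifying via Gauss--Bonnet that the fundamental piece $\Sigma\cap\Omega$ is a simply connected pentagon meeting $\partial\B^3$ in a single arc (Proposition~\ref{prop:Steklov}).
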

 
The $\pri_2$-equivariant genus one catenoid $\FKS$ constructed in Theorem~\ref{thm:ExistenceGenusOneCatenoid} should be compared with the $\pyr_2\cong\Z_2\times\Z_2$-equivariant free boundary minimal surface $\KKMS$ with genus one and two boundary components found in \cite[Theorem~1.2]{KarpukhinKusnerMcGrathStern2024}. 
The numerical simulations depicted in Figure~\ref{fig:g1b2} indicate that $\KKMS$ has smaller symmetry group but greater area than that of $\FKS$ (cf.~Table~\ref{table:complexity}). 
In particular, we expect the two solutions $\FKS$ and $\KKMS$ to be distinct.  
We also expect this pair to be of the simplest topological type for which the phenomenon of topological nonuniqueness of free boundary minimal surfaces in $\B^3$ occurs. 

It remains an open question whether the critical catenoid is the unique free boundary minimal surface in $\B^3$ with the second least area and second least Morse index after the equatorial disc. 
Ordering the known embedded free boundary minimal surfaces in $\B^3$ by geometric complexity, it is likely that the surface $M_1$ with genus one and connected boundary constructed in \cite{CarlottoFranzSchulz2022} is the next surface on the list after the critical catenoid (see Table \ref{table:complexity}). 
We expect that the genus one catenoid $\FKS$ obtained in Theorem \ref{thm:ExistenceGenusOneCatenoid} is the fourth simplest example in $\B^3$, followed by the free boundary minimal trinoid constructed in \cite{FranzSchulz2023} 
and the aforementioned solution $\KKMS$ found in \cite{KarpukhinKusnerMcGrathStern2024}.  
Moreover, we conjecture that $\FKS$ has Morse index equal to $6$. 
The critical catenoid is known to have Morse index equal to $4$ \cite{Devyver2019,Tran2020,SmithZhou2019}. 
Existence of a solution with Morse index $5$ is also known \cite[§\,2]{FranzSchulz2023}, and this surface conjecturally coincides with $M_1$.  

\begin{table} 
\caption{Free boundary minimal surfaces in $\B^3$ by order of complexity.
Data marked with ${}^{*}$ are conjectural based on numerical simulations. 
See Table~\ref{table:group} for a list of symmetry groups.  
} 
\label{table:complexity}
\smallskip
\begin{tabular}{
>{\raggedright\arraybackslash}p{0.345\textwidth-2\tabcolsep} 
>{\centering\arraybackslash}p{0.13\textwidth-3\tabcolsep}  
>{\centering\arraybackslash}p{0.13\textwidth-1\tabcolsep}  
>{\centering\arraybackslash}p{0.13\textwidth-2\tabcolsep}  
>{\centering\arraybackslash}p{0.13\textwidth-2\tabcolsep}  
>{\centering\arraybackslash}p{0.13\textwidth-2\tabcolsep}
}
free boundary minimal surface             & genus & boundaries & symmetry      &  area           & index   \\\hline\hline
equatorial disc                           & $0$   & $1$        & $\Ogroup(2)$  &  $\pi$          & $1$     \\\hline   
critical catenoid $\K$                    & $0$   & $2$        & $\Ogroup(2)$  &  $1.6671\,\pi$  & $4$     \\\hline
$M_1$ from \cite{CarlottoFranzSchulz2022} & $1$   & $1$        & $\apr_2$      &  $1.8032\,\pi^{\mathrlap*}$ &  $5^{\mathrlap*}$    \\\hline
genus one critical catenoid $\FKS$                 & $1$   & $2$        & $\pri_2$      &  $1.8559\,\pi^{\mathrlap*}$ &  $6^{\mathrlap*}$    \\\hline
trinoid from \cite{FranzSchulz2023}       & $0$   & $3$        & $\pri_3$      &  $1.9117\,\pi^{\mathrlap*}$ &  $6^{\mathrlap*}$    \\\hline
$\KKMS$ from \cite{KarpukhinKusnerMcGrathStern2024} & $1$   & $2$        & $\pyr_2$      &  $1.9460\,\pi^{\mathrlap*}$ &  $7^{\mathrlap*}$    \\\hline
\end{tabular}
\end{table}

Let us sketch some of the main ideas involved in the proof of Theorem~\ref{thm:ExistenceGenusOneCatenoid}. 
Given Cartesian coordinates $x_1,x_2,x_3$ in $\R^3\supset\B^3$, there are three different realizations of the critical catenoid in $\B^3$ which are invariant under reflection across the three coordinate planes. 
We denote the one which is disjoint from the $x_i$-axis by $\K_{x_i}$ for each $i\in\{1,2,3\}$. 
The first key insight is that there is a one-parameter $\pri_2$-equivariant family $\{\Sigma_s\}_{s\in [0,1]}$ of genus one surfaces with two boundary components interpolating between $\K_{x_3}$ (together with arcs) and $\K_{x_1}$ (together with arcs). 
Roughly speaking, near $\K_{x_3}$, the surface is obtained by adding two ribbons connecting the two boundary components of $\K_{x_3}$ along the intersection $\partial\B\cap \{x_2=0\}$
(see Figure~\ref{fig:sweepout}). 
Performing a one-parameter equivariant min-max procedure on this family, however, does not produce a genus one catenoid as there is no guarantee that the width of this family is greater than the area of the critical catenoid. 
In fact we show in Proposition~\ref{prop:EquivIndexMoreThan2} that the desired surface must have equivariant index at least $2$ and therefore it would not be possible to obtain it with a one-parameter min-max procedure. 

To remedy this,  we extend the family to a two-parameter family $\{\Sigma_{s,t}\}_{(s,t)\in [0,1]^2}$ by considering for each $s$, a family of surfaces that foliates each component of $\B^3\setminus \Sigma_s$.
In this way,  for each $s$, the family $\{\Sigma_{s, t}\}_{t\in [0,1]}$ gives a nontrivial sweepout of the ball $\B^3$. 
We can then use Lusternick--Schnirelman theory to ensure the following dichotomy: either the width of this two-parameter family is indeed greater than the area of the critical catenoid or else there is a one parameter family of genus one catenoids with area less than that of the critical catenoid. 
In both cases we obtain a new genus one free boundary catenoid. 
The fact that the equivariant index is at least $2$ then rules out the second case in the dichotomoy. 
Key ingredients in controlling the topological type of our limiting free boundary minimal surface are  
the topological lower semicontinuity result from \cite{FranzSchulz2023} and the uniqueness of the critical catenoid among embedded free boundary minimal annuli in $\B^3$ with prismatic symmetry \cite{McGrath2018}*{Theorem~1}. 
The fact that the prismatic symmetry group includes orientation-reversing isometries motivates our extension  
of the equivariant min-max procedure to accommodate all finite group of isometries without requiring them to be orientation-preserving (see Theorem~\ref{thm:EquivMinMax}).  

Two-parameter families that ``flip'' a Heegaard splitting (or optimal foliation) to one with the opposite orientation were introduced in \cite{Ketover2022} to produce index $2$ minimal surfaces in three-manifolds in generality.  Here the situation is novel in that we consider two-parameter families that \emph{rotate} a critical catenoid by angle $\pi/2$ in addition to interchanging the sides.  As a technical point, by allowing the initial and final foliations to slide, we may interpolate instead between differently oriented annuli instead of the critical catenoid which gives more flexibility in constructing the two-parameter family.  

\begin{remark}
Our generalization of the equivariant min-max procedure also applies in the closed case (e.\,g.~in the unit sphere $\Sp^3$). 
Previous equivariant min-max constructions of minimal surfaces use orientation-preserving isometry groups, such as the dihedral group (see Table~\ref{table:group}). 
In many instances, these constructions can now be replicated verbatim with a larger symmetry group, such as the prismatic or antiprismatic group, which includes orientation-reversing isometries and is typically expected to be the full symmetry group of the surfaces in question (see e.\,g. \cite{FranzSchulz2023}*{Remark~5.2}). 
This is for example the case for the surfaces constructed in \cite{Ketover2016Equivariant}*{Section~6.2} (see Remark~6.1 therein), \cite{Ketover2016FBMS}*{Theorem~1.1}, \cite{BuzanoNguyenSchulz2021}*{Theorem~1.2}, \cite{CarlottoFranzSchulz2022}*{Theorem~1.1}, \cite{FranzSchulz2023}*{Theorem~5.1} and \cite{SchulzEllipsoids}*{Theorems~1.1--1.3}.
\end{remark}

\begin{remark}
In the case when the finite group of isometries is generated by reflection symmetries, the equivariant min-max approach is related to a (nonequivariant) min-max procedure in a fundamental domain of the group action -- a locally wedge-shaped manifold. 
Almgren--Pitts min-max theory in locally wedge-shaped manifolds has recently been studied in \cite{MazurowskiWang2023}. 
However, we consider here the Simon--Smith setting rather than the Almgren--Pitts one. 
We also refer to \cite{Wang2023} for equivariant Almgren--Pitts min-max theory in the free boundary setting.
\end{remark}

The paper is organized as follows:
\begin{itemize}[nosep]
\item In Section~\ref{sec:minmax}, we state and prove our extension of the equivariant min-max theorem.
\item In Section~\ref{sec:prismaticsurfaces}, we investigate the general structure of surfaces with prismatic symmetry.
\item In Section~\ref{sec:sweepout}, we design the two-parameter sweepout intended for the min-max procedure.
\item In Section~\ref{sec:topology}, we apply the equivariant min-max theorem with the sweepout constructed in the previous section, and obtain a free boundary minimal surface with the desired properties, concluding the proof of Theorem~\ref{thm:ExistenceGenusOneCatenoid}. 
\end{itemize}

\paragraph{Acknowledgements.} 
G.\,F.~was partially supported by NSF grant DMS-2405361. Moreover, part of this work was performed while G.\,F. was in residence at the Simons Laufer Mathematical Sciences Institute (formerly MSRI) during the Fall 2024 semester, supported by NSF grant DMS-1928930.
D.\,K.~was partially supported by NSF DMS-1906385. 
M.\,S.~has received funding from the European Research Council (ERC) under the European Union’s Horizon 2020 research and innovation programme (grant agreement No. 947923).

\section{Equivariant min-max theory with reflection symmetries} \label{sec:minmax}

\subsection{Symmetry groups}

Let $M$ be a $3$-dimensional Riemannian manifold and let $G$ be a finite group of isometries of $M$. 
The \emph{isotropy group} at $x\in M$ is defined as 
\begin{align*}
G_x&=\{h\in G\st h(x)=x\}.
\intertext{Equivalently, we say that \emph{$x$ is of isotropic type $G_x$}. 
The \emph{singular locus} of $G$ is then defined as}
\mathcal{S}&=\{x\in M \st G_x\neq\{\mathrm{id}\}\}.
\end{align*} 
Given any $x\in M$ the isotropy group $G_x$ acts on the tangent space $T_x M$ as a finite subgroup of the orthogonal group $\Ogroup(3)$. 
The finite subgroups of $\Ogroup(3)$ are fully characterized (see e.\,g.~\cite{ConwaySmith2003}*{§\,3}). 
We list them in Table~\ref{table:group} and include a description of their singular loci. 
In each case, the isotropic type of the origin is equal to the entire subgroup; hence, we mention only the isotropic types of points other than the origin. 
Note that some of the groups (e.\,g.~the pro-antiprismatic group and the full symmetry groups of the platonic solids) contain glide reflections (also called rotoreflections), which have only one fixed point at the origin.  

The literature uses various notation systems to refer to finite subgroups of $\Ogroup(3)$, with no universally accepted standard. 
We employ the notation introduced in \cite[§\,2]{CarlottoSchulzWiygul2022} as well as Conway--Thurston's orbifold notation, which encodes the transformation types in the group action and the isotropic types of points in the singular locus. 
More precisely, the asterix $\mathord*$ indicates a reflection, while a number $n$ represents a rotation of angle $2\pi/n$. 
When a number $n$ appears on the right of an asterix, then the singular locus contains points of isotropic type $\mathord*nn$. 
For example, $*11$ denotes the group generated by the reflection with respect to one plane.
The orbifold notation $\mathord*22n$ for the prismatic group $\pri_n$ suggests the presence of reflections, rotations of angles $\pi$ respectively $2\pi/n$, and points of isotropic types $\mathord*22$ and $\mathord*nn$.

\begin{table} 
\caption{List of finite subgroups of $\Ogroup(3)$. The prefix ``chiro-'' stands for \emph{orientation-preserving}, while the prefix ``holo-'' stands for \emph{whole}. 
}
\label{table:group}
\smallskip
\begin{tabular}{
>{\raggedright\arraybackslash}p{0.3\textwidth-2\tabcolsep} 
>{\centering\arraybackslash}p{0.1\textwidth-2\tabcolsep} 
>{\raggedleft\arraybackslash}p{0.1\textwidth-2\tabcolsep} 
>{\centering\arraybackslash}p{0.198\textwidth-2\tabcolsep} 
>{\raggedright\arraybackslash}p{0.3\textwidth-2\tabcolsep}}
name          & \multicolumn{2}{c}{notation(s)} & order& singular locus \\\hline\hline
cyclic               &$nn$           & $\Z_n$   &  $n$ & $1$ line  of type $nn$ \\\hline
dihedral             &$22n$          & $\dih_n$ & $2n$ & $n$ lines of type $22$,\newline 
                                                         $1$ line of type $nn$  \\\hline
(holo-)pyramidal     &$\mathord*nn$  & $\pyr_n$ & $2n$ & $1$ line of type $\mathord*nn$,\newline 
                                                         $n$ planes of type $\mathord*11$ \\\hline
(holo-)prismatic     &$\mathord*22n$ & $\pri_n$ & $4n$ & $n$ lines of type $\mathord*22$,\newline  
                                                         $1$ line of type $\mathord*nn$,\newline 
                                                         $n+1$ planes of type $*11$ \\\hline
(holo-)antiprismatic &$2\mathord*n$  & $\apr_n$ & $4n$ & $n$ lines of type $22$,\newline 
                                                         $1$ line of type $*nn$,\newline 
                                                         $n$ planes of type $*11$ \\\hline
pro-prismatic        &$n\mathord*$   &          & $2n$ & $1$ line of type $nn$,\newline 
                                                         $1$ plane of type $\mathord*11$ \\\hline
pro-antiprismatic    &$n\times$      &          & $2n$ & $1$ line of type $nn$ \\\hline
chiro-tetrahedral    &$332$          &          & $12$ & $3$ lines of type $22$,\newline
                                                         $4$ lines of type $33$ \\\hline
chiro-octahedral     &$432$          &          & $24$ & $6$ lines of type $22$,\newline 
                                                         $4$ lines of type $33$,\newline 
                                                         $3$ lines of type $44$ \\\hline
chiro-icosahedral    &$532$          &          & $60$ & $15$ lines of type $22$,\newline 
                                                         $10$ lines of type $33$,\newline 
                                                         $12$ lines of type $55$ \\\hline
pyritohedral         &$3\mathord*2$  &          & $24$ &  $3$ lines of type $\mathord*22$,\newline
                                                         $4$ lines of type $33$ \newline
                                                         $3$ planes of type $\mathord*11$ \\\hline
(holo-)tetrahedral   &$\mathord*332$ &          & $24$ & $3$ lines of type $\mathord*22$,\newline
                                                         $4$ lines of type $\mathord*33$,\newline
                                                         $6$ planes of type $\mathord*11$  \\\hline
(holo-)octahedral    &$\mathord*432$ &          & $48$ & $6$ lines of type $\mathord*22$,\newline 
                                                         $4$ lines of type $\mathord*33$,\newline 
                                                         $3$ lines of type $\mathord*44$,\newline 
                                                         $9$ planes of type $\mathord*11$   \\\hline
(holo-)icosahedral   &$\mathord*532$ &          & $120$& $15$ lines of type $\mathord*22$,\newline 
                                                         $10$ lines of type $\mathord*33$,\newline 
                                                         $12$ lines of type $\mathord*55$,\newline 
                                                         $15$ planes of type $\mathord*11$  \\\hline\hline
\end{tabular}
\end{table}

As a corollary of the full characterization of finite subgroups of $\Ogroup(3)$, we obtain the following description of the singular locus of a finite group of isometries acting on a three-dimensional Riemannian manifold. 

\begin{proposition} \label{prop:SingLocusStrata}
Let $G$ be a finite group of isometries acting on a three-dimensional Riemannian manifold $M$. Then the singular locus $\mathcal{S}$ can be written as $\mathcal{S}=\mathcal{S}_0\cup\mathcal{S}_1\cup\mathcal{S}_2$, where:
\begin{itemize}[nosep]
\item $\mathcal{S}_0$ is a finite set of isolated points;
\item $\mathcal{S}_1$ is a union of (geodesic) lines of isotropic type $nn$ or $\mathord*nn$ for some $n\geq 2$;
\item $\mathcal{S}_2$ is a union of (totally geodesic) surfaces of isotropic type $\mathord*11$.
\end{itemize}
\end{proposition}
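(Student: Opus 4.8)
The plan is to deduce Proposition~\ref{prop:SingLocusStrata} from the local structure near each point of $\mathcal{S}$, reducing the global statement to the classification of finite subgroups of $\Ogroup(3)$ recorded in Table~\ref{table:group}. First I would fix $x\in\mathcal{S}$ and pass to normal coordinates: since $G$ is finite and acts by isometries, the exponential map $\exp_x\colon B_r(0)\subset T_xM\to M$ is a $G_x$-equivariant diffeomorphism for $r$ small, intertwining the isotropy action on $T_xM$ with the $G_x$-action on a neighbourhood $U_x$ of $x$. Consequently, within $U_x$ the singular locus $\mathcal{S}\cap U_x$ is the image under $\exp_x$ of the singular locus of the linear action of $G_x\leq\Ogroup(3)$ on $T_xM\cong\R^3$, and the isotropic type at $\exp_x(v)$ equals the stabiliser of $v$ in $G_x$. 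Thus the \emph{local} picture is entirely governed by the list in Table~\ref{table:group}: reading off the ``singular locus'' column, the linear action of any finite subgroup of $\Ogroup(3)$ has singular set consisting of the origin together with a finite union of lines through the origin of rotational type $nn$ or $\mathord*nn$ and a finite union of planes through the origin of reflective type $\mathord*11$; crucially, the only points fixed by a rotoreflection or by the full group are isolated (the origin).

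Next I would globalize. Define $\mathcal{S}_2$ to be the union of all connected components of $\mathcal{S}$ that locally look like the $\mathord*11$ planes, $\mathcal{S}_1$ the union of components that locally look like $nn$ or $\mathord*nn$ lines, and $\mathcal{S}_0$ the remaining points. The local model shows that every $x\in\mathcal{S}$ falls into exactly one of these cases \emph{except} at points where strata of different dimensions meet (e.g.\ a line of type $\mathord*22$ lying in a plane of type $\mathord*11$, or several reflection planes crossing along a line); I would handle this by declaring a point to lie in the lowest-dimensional stratum containing it, i.e.\ $\mathcal{S}_0\supset$ isolated fixed points of rotoreflections and intersection points of $\mathcal{S}_1$-components, and $\mathcal{S}_1$ includes the $1$-dimensional loci even where they sit inside $\mathcal{S}_2$. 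To see that $\mathcal{S}_2$ is a union of \emph{totally geodesic} surfaces and $\mathcal{S}_1$ a union of geodesic lines, I would argue as usual: a connected component $N$ of the fixed-point set of a single isometry $h$ (a reflection, respectively a rotation) is a closed totally geodesic submanifold, since the differential $dh_x$ fixes $T_xN$ and, being an isometry, preserves geodesics; hence a geodesic tangent to $N$ stays in the fixed-point set and therefore in $N$. The component $N$ is the image under $\exp$ of a linear subspace at each of its points, so its dimension is locally constant, giving surfaces and lines respectively. Finally $\mathcal{S}_0$ is discrete because each of its points is isolated in $\mathcal{S}$ within some $U_x$ by the local model, and $M$ can be taken compact (or one argues locally finitely); so $\mathcal{S}_0$ is at most countable and discrete, hence a union of isolated points, and in the compact case finite.

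The main obstacle I anticipate is purely bookkeeping rather than conceptual: making the decomposition $\mathcal{S}=\mathcal{S}_0\cup\mathcal{S}_1\cup\mathcal{S}_2$ \emph{well defined and consistent across overlapping charts}, since the same point can be an interior point of a reflection surface in one description and an intersection point of several singular strata in another, and one must check that the assignment ``put $x$ in the lowest-dimensional stratum whose local model contains a submanifold through $x$'' is independent of the chart $U_x$ and matches up along chart overlaps. This follows because the isotropy group $G_x$, and hence its entire linear singular pattern, is intrinsic to $x$; so the local model at $x$ is canonical and the stratification it prescribes is the same in every normal chart centred at $x$. Once this coherence is in hand, the three bulleted properties are immediate from the local linear models together with the total geodesy argument above.
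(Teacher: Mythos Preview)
Your approach is correct and is exactly what the paper has in mind: the paper presents this proposition without a formal proof, simply stating it ``as a corollary of the full characterization of finite subgroups of $\Ogroup(3)$'' (i.e.\ Table~\ref{table:group}), and your argument via the $G_x$-equivariant exponential map is the standard way to turn that classification into the local models and then globalize. Your discussion is considerably more detailed than what the paper supplies; the only minor point is that the finiteness of $\mathcal{S}_0$ indeed uses compactness of $M$ (which the paper assumes throughout §\,\ref{sec:minmax}), and the ``bookkeeping obstacle'' you flag largely evaporates once one defines the strata directly by isotropy type rather than by local dimension.
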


\begin{remark}[Prismatic group action]\label{rem:prismatic} 
The group relevant for Theorem~\ref{thm:ExistenceGenusOneCatenoid} is the prismatic group $\pri_2$ of order $8$ acting on $\B^3$. 
It is isomorphic to $\Z_2\times\Z_2\times\Z_2$. 
In general the prismatic group $\pri_n$ is isomorphic to $\Z_2\times\dih_n$ for any $2\leq n\in\N$. 
We typically equip $\B^3$ with Cartesian coordinates $x_1,x_2,x_3$ such that the action of $\pri_n$ on $\B^3$ is generated by the reflection across the plane $\{x_3=0\}$, the rotation of angle $\pi$ around the $x_1$-axis and the rotation of angle $2\pi/n$ around the $x_3$-axis.
\end{remark}

\subsection{Equivariant free boundary minimal surfaces}

We adopt the notation and definitions from \cite{Franz2022}*{Sections~11 and~12}. 
For completeness, we will recall the relevant ones and prove some additional lemmata. 
As before, $M$ is a compact, $3$-dimensional Riemannian manifold with boundary and $G$ a finite group of isometries of $M$.  

\begin{definition}\label{defn:varifolds}
We denote by $\mathcal{V}^2_G(M)$ the set of $2$-dimensional $G$-equivariant varifolds supported in $M$, endowed with the weak topology. Moreover, let us denote by $\vard$ a metric metrizing this topology (see \cite{Pitts1981}*{pp.~66} or \cite{MarquesNeves2014}*{pp.~703}).
We denote by $\nm{V}$ the Radon measure in $M$ associated to $V$, and we call $\nm{V}(M)$ the area of $V$.
\end{definition}

\begin{definition}
Given a $G$-equivariant varifold $V\in\mathcal{V}^2_G(M)$, we say that $V$ is \emph{free boundary stationary} if $\delta V(X)=0$ for all vector fields $X$ on $M$  tangent to $\partial M$, meaning $X(x)\in T_x\partial M$ for all $x\in \partial M$. Here, $\delta V(X)$ is the first variation of the area of $V$ along $X$ as defined e.\,g.~in \cite{Franz2022}*{Definition~1.1.1}.
By Palais' \cite{Palais1979} principle of symmetric criticality, it is equivalent for $V$ to satisfy $\delta V(X) = 0$ for all $G$-equivariant vector fields $X$ on $M$ tangent to $\partial M$ (see also \cite{Ketover2016Equivariant}*{Lemma~3.8} for the result in this setting).
\end{definition}

\begin{definition}%\label{def:GequivIndex}
Let $\Sigma\subset M$ be a $G$-equivariant free boundary minimal surface in $M$ 
(which means that the associated varifold is free boundary stationary) 
and let $\Gamma_G(N\Sigma)$ denote the sections of the normal bundle of $\Sigma$ obtained as restriction to $\Sigma$ of $G$-equivariant vector fields in $M$.
Let $Q^\Sigma$ be the quadratic form such that $Q^\Sigma(Y,Y)$ is the second variation of the area of $\Sigma$ along $Y$, for every $Y\in\Gamma_G(N\Sigma)$.
Then, the \emph{$G$-equivariant (Morse) index} $\ind_G(\Sigma)$ of $\Sigma$ is defined as the maximal dimension of a linear subspace of $\Gamma_G(N\Sigma)$ where $Q^\Sigma$ is negative definite.
Moreover, we say that $\Sigma$ is \emph{$G$-stable} if $\ind_G(\Sigma)=0$.
\end{definition}

In the case when $\Sigma\subset M$ is a $G$-equivariant, \emph{two-sided}, free boundary minimal surface, 
any $G$-equivariant section $Y\in\Gamma_G(N\Sigma)$ of the normal bundle can be written as $Y=u\nu$, 
where $u\in C^\infty(\Sigma)$ and $\nu$ is a choice of unit normal on $\Sigma$.
Then, the second variation of the area of $\Sigma$ along $Y$ is given by
\begin{align*}
Q^\Sigma(Y,Y)=Q_\Sigma(u,u) 
&\vcentcolon=\int_\Sigma \left(\abs{\nabla u}^2 - (\Ric_M(\nu,\nu) + \abs{A}^2)u^2\right) \, d\hsd^2 + \int_{\partial\Sigma} \II^{\partial M}(\nu,\nu)u^2 \, d\hsd^1 \\
&\hphantom{\vcentcolon}=-\int_\Sigma u \mathcal{L}_\Sigma u \,d \hsd^2 
+\int_{\partial \Sigma}\bigl(u\partial_\eta u + \II^{\partial M}(\nu,\nu) u^2 \bigr) \,d \hsd^{1},
\end{align*}
where $\mathcal{L}_\Sigma\vcentcolon=\Delta_\Sigma+\Ric_M(\nu,\nu) + \abs{A}^2$ is the Jacobi operator associated to $\Sigma$.

Now, let us further assume that $\Sigma$ is connected. 
Note that, if $Y=u\nu$ is $G$-equivariant, for all $h\in G$ the fact that $\gelem_*(u\nu) = u\nu$ implies
\[
u\bigl(\gelem(x)\bigr)\nu\bigl(\gelem(x)\bigr)
=d\gelem_x[u(x)\nu(x)] = u(x) \,d\gelem_x[\nu(x)] 
=\operatorname{sgn}_\Sigma(\gelem) u(x) \nu\bigl(\gelem(x)\bigr),
\]
where $\operatorname{sgn}_\Sigma(\gelem) = 1$ if $\gelem_*\nu = \nu$ and $\operatorname{sgn}_\Sigma(\gelem)=-1$ if $\gelem_*\nu = -\nu$. 
Indeed, $h_*\nu $ is equal to $\nu$ or $-\nu$, because $h$ is an isometry and $h(\Sigma)=\Sigma$ (hence $h_*(N\Sigma)=N\Sigma$), and $\Sigma$ is connected (thus the sign is constant). 
Note that $\operatorname{sgn}_\Sigma(\gelem_1\circ\gelem_2) = \operatorname{sgn}_\Sigma(\gelem_1)\operatorname{sgn}_\Sigma(\gelem_2)$.
Therefore, defining
\[
C^\infty_G(\Sigma) \vcentcolon= \{u\in C^\infty(\Sigma) \st u\circ \gelem = \operatorname{sgn}_\Sigma(\gelem) u\ \ \forall \gelem\in G\}
\]
as in \cite{Franz2022}*{Definition~11.1.3}, we have that $Y=u\nu$ is $G$-equivariant if and only if $u\in C^\infty_G(\Sigma)$.
Hence, the $G$-equivariant index of $\Sigma$ coincides with the maximal dimension of a subspace of $C^\infty_G(\Sigma)$ where $Q_\Sigma$ is negative definite.
Moreover, as stated e.\,g.~in \cite{Franz2022}*{Theorem~11.1.5}, the elliptic problem
\begin{equation}\label{eq:EigenvProblem}
\left\{
\begin{aligned}
-\mathcal{L}_\Sigma\varphi&= \lambda \varphi &&\text{ in $\Sigma$, }\\
\partial_\eta \varphi&=-\II^{\partial M}(\nu,\nu)\varphi &&\text{ on $\partial\Sigma$}
\end{aligned}\right.
\end{equation}
admits a discrete spectrum $\lambda_1\le \lambda_2\le \ldots\le \lambda_k\le \ldots\to+\infty$ with associated $L^2_G(\Sigma)$-orthonormal basis of eigenfunctions $(\varphi_k)_{k\ge 1}\subset C^\infty_G(\Sigma)$ of $L^2_G(\Sigma)$. The $G$-equivariant index equals the number of negative eigenvalues and we have the following variational characterization
\begin{equation}\label{eq:VarCharact}
\lambda_k=
\adjustlimits\inf_{S < C^\infty_G(\Sigma),\;\dim S=k~}\sup_{0\neq u\in S}
\frac{Q_\Sigma(u,u)}{\int_\Sigma u^2}.
\end{equation}

\begin{lemma} \label{lem:StableGStable}
Let $\Sigma\subset M$ be a $G$-equivariant free boundary minimal surface and assume that the unit normal vector field $\nu$ on $\Sigma$ is invariant under the $G$-action in the sense that $\gelem_* \nu = \nu$ for all $\gelem\in G$.
Then $\Sigma$ is $G$-stable if and only if it is stable.
\end{lemma}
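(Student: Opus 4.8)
The plan is to exploit the sign character $\operatorname{sgn}_\Sigma\colon G\to\{\pm1\}$ introduced just before the statement. Since we are assuming $\gelem_*\nu=\nu$ for every $\gelem\in G$, we have $\operatorname{sgn}_\Sigma\equiv 1$, and therefore the space of admissible test functions simplifies to
\[
C^\infty_G(\Sigma)=\{u\in C^\infty(\Sigma)\st u\circ\gelem=u\ \ \forall\gelem\in G\},
\]
i.e.\ the genuinely $G$-invariant functions on $\Sigma$. One direction is immediate: $C^\infty_G(\Sigma)\subset C^\infty(\Sigma)$, so if $Q_\Sigma$ is positive semidefinite on all of $C^\infty(\Sigma)$ (stability) then it is a fortiori positive semidefinite on the subspace $C^\infty_G(\Sigma)$, hence $\ind_G(\Sigma)=0$ and $\Sigma$ is $G$-stable. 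The content is the converse.

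For the converse, I would argue by contrapositive: assume $\Sigma$ is unstable and produce a $G$-invariant destabilising function. Let $\varphi_1$ be a first eigenfunction of the problem \eqref{eq:EigenvProblem} on $C^\infty(\Sigma)$ (the \emph{nonequivariant} problem), with eigenvalue $\lambda_1<0$; by standard elliptic theory $\varphi_1$ can be taken to be of one sign, say $\varphi_1>0$ in the interior of $\Sigma$. The key point is that, because $\nu$ is $G$-invariant and each $\gelem\in G$ is an isometry preserving $\Sigma$ and $\partial M$, the Jacobi operator $\mathcal{L}_\Sigma$ and the Robin boundary condition in \eqref{eq:EigenvProblem} are $G$-invariant; hence $\varphi_1\circ\gelem$ is again a positive first eigenfunction for the same $\lambda_1$. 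By simplicity of the first eigenvalue (the eigenspace of a positive eigenfunction is one-dimensional), $\varphi_1\circ\gelem=c_\gelem\varphi_1$ for a constant $c_\gelem>0$; comparing $L^2$ norms (each $\gelem$ is volume-preserving on $\Sigma$) forces $c_\gelem=1$. Thus $\varphi_1\in C^\infty_G(\Sigma)$, and since $Q_\Sigma(\varphi_1,\varphi_1)=\lambda_1\int_\Sigma\varphi_1^2<0$, the function $\varphi_1$ witnesses $\ind_G(\Sigma)\ge 1$, so $\Sigma$ is not $G$-stable. Equivalently, via \eqref{eq:VarCharact} with $k=1$, one sees directly that the equivariant $\lambda_1$ equals the nonequivariant $\lambda_1$ whenever the latter's eigenfunction is automatically $G$-invariant.

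The main obstacle — and the only place any care is needed — is justifying that the nonequivariant first eigenfunction is $G$-invariant, which rests on (a) simplicity of $\lambda_1$ for the Robin-type eigenvalue problem \eqref{eq:EigenvProblem} on a connected surface, and (b) the observation that $G$ acts by isometries preserving all the data of the problem. If $\Sigma$ is not assumed connected one should apply the argument componentwise, or alternatively phrase the whole proof using the averaging/symmetrisation operator $u\mapsto \frac{1}{|G|}\sum_{\gelem\in G}u\circ\gelem$, which maps $C^\infty(\Sigma)\to C^\infty_G(\Sigma)$ and does not increase the Rayleigh quotient — but the cleanest writeup is the eigenfunction-simplicity argument above, and I would present it that way.
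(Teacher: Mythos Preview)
Your proposal is correct and follows essentially the same approach as the paper: both arguments take the positive first eigenfunction $\varphi_1$ of the nonequivariant problem \eqref{eq:EigenvProblem}, observe that $\varphi_1\circ\gelem$ is again a positive first eigenfunction (hence $\varphi_1\circ\gelem=c_\gelem\varphi_1$ with $c_\gelem>0$ by simplicity), and then conclude $c_\gelem=1$. The only cosmetic difference is in this last step --- you compare $L^2$ norms, while the paper iterates $\gelem^{|G|}=\mathrm{id}$ to obtain $c_\gelem^{|G|}=1$ --- but both are immediate.
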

\begin{proof}
The proof is similar to \cite{Ketover2016Equivariant}*{Proposition~4.6}.
We recall that $\gelem_* \nu = \nu$ for all $\gelem\in G$ if and only if the function $\operatorname{sgn}_\Sigma\colon G\to \{-1,1\}$, used to defined $C^\infty_G(\Sigma)$, is constant equal to $1$.
Let $\psi_1\in C^\infty(\Sigma)$ be the first eigenfunction of the Jacobi operator $\mathcal{L}_\Sigma$, without equivariance. Up to changing sign, we can assume that $\psi_1>0$.
Let us show that $\psi_1\in C^\infty_G(\Sigma)$.
By the variation characterization of eigenvalues, the function $\psi_1\circ\gelem$ is also a first eigenfunction of the Jacobi operator for all $\gelem\in G$. 
Therefore, by uniqueness of the first eigenfunction, $\psi_1\circ\gelem = c \psi_1$ for some constant $c\in\R$. 
In fact $c>0$ because $\psi_1$ is positive. 
Iterating, we have $\psi_1=\psi_1\circ h^{\abs G} = c^{\abs G}\psi_1$, which proves that $c=1$.
As a consequence, for all $\gelem\in G$, we have that $\psi_1\circ\gelem = \psi_1 = \operatorname{sgn}_\Sigma(\gelem)\psi_1$, since $ \operatorname{sgn}_\Sigma(\gelem)=1$ by assumption. This proves that $\psi_1\in C^\infty_G(\Sigma)$ and therefore $\psi_1$ is also the first equivariant eigenfunction. 
\end{proof}

The previous lemma allows us to drop the assumption that $G$ is orientation-preserving in \cite{Franz2023}*{Theorem~4.3}. 

\begin{proposition} \label{prop:ConvergenceOfSurfaces}
Let $\{\Sigma_k\}_{k\in\N}$ be a sequence of $G$-equivariant free boundary minimal surfaces in $M$, with uniformly bounded area, such that $\ind_G(\Sigma_k)\le n$ for some fixed $n\in\N$. Moreover, assume that $\Sigma_k$ intersects orthogonally the singular locus consisting of points of isotropic type $*nn$ for $n\ge 1$. 
Then a subsequence of $\{\Sigma_k\}_{k}$ converges locally graphically and smoothly (possibly with multiplicity) to a free boundary minimal surface $\tilde\Sigma\subset M\setminus(\mathcal{S}_0\cup\mathcal{S}_1\cup\mathcal{Y})$, where $\mathcal{S}_0$ and $\mathcal{S}_1$ are defined in Proposition~\ref{prop:SingLocusStrata} and where $\mathcal{Y}$ is a finite subset of $M$ with $\abs{\mathcal{Y}}\le n\abs{G}$. 
Furthermore, if there exists a $G$-equivariant free boundary minimal surface $\Sigma\subset M$ such that $\tilde\Sigma = \Sigma\setminus (\mathcal{S}_0\cup\mathcal{S}_1\cup\mathcal{Y})$ (namely if $\tilde\Sigma$ extends smoothly to $M$), then $\ind_G(\Sigma)\le n$.
\end{proposition}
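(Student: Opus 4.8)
The plan is to run the $G$-equivariant bounded-index compactness scheme of \cite{Franz2023}*{Theorem~4.3} and to observe that the single step in which orientation-preservation of $G$ was used there is now supplied by Lemma~\ref{lem:StableGStable}. \emph{First,} I would localise the index: using the variational characterisation \eqref{eq:VarCharact} together with additivity of the $G$-equivariant index over disjoint $G$-invariant open sets, the hypothesis $\ind_G(\Sigma_k)\le n$ rules out $n+1$ pairwise disjoint $G$-invariant open subsets of $M$ in each of which $\Sigma_k$ is $G$-unstable. A covering argument then produces, after passing to a subsequence, at most $n$ orbits whose union $\mathcal{Y}_0$ satisfies $\abs{\mathcal{Y}_0}\le n\abs{G}$ and such that every point of $M\setminus(\mathcal{S}_0\cup\mathcal{S}_1\cup\mathcal{Y}_0)$ has a $G$-invariant neighbourhood on which $\Sigma_k$ is $G$-stable for all large $k$; one sets $\mathcal{Y}=\mathcal{Y}_0\setminus(\mathcal{S}_0\cup\mathcal{S}_1)$.

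\emph{Second,} I would upgrade $G$-stability to uniform local curvature bounds away from $\mathcal{S}_0\cup\mathcal{S}_1\cup\mathcal{Y}$. For a point $p\notin\mathcal{S}$ the isotropy is trivial, the balls $\{h(B_r(p)):h\in G\}$ are pairwise disjoint for small $r$, and spreading any negative test section from $B_r(p)\cap\Sigma_k$ over this orbit (which preserves $G$-equivariance and multiplies the second variation by $\abs{G}$) shows that $G$-stability forces genuine stability of $\Sigma_k$ on $B_r(p)$; Schoen-type curvature estimates for stable minimal surfaces, in their free boundary form along $\partial M$, then bound $\abs{A_{\Sigma_k}}$ near $p$. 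For a point $p$ on a reflection surface ($\mathcal{S}_2$, isotropic type $\mathord*11$) lying on no other stratum, either $\Sigma_k$ is contained in the totally geodesic mirror through $p$ (nothing to prove) or, by the orthogonality hypothesis, $\Sigma_k$ meets the mirror orthogonally; then the generating reflection $R$ fixes the unit normal of $\Sigma_k$ along the intersection, so $\operatorname{sgn}_{\Sigma_k}(R)=1$ locally, and \emph{this is exactly where Lemma~\ref{lem:StableGStable} enters}: its local version shows that the genuine first eigenfunction of the Jacobi operator on $B_r(p)\cap\Sigma_k$ is $R$-invariant, so the $\langle R\rangle$-equivariant stability obtained in the first step is genuine stability, and the curvature estimate applies again (equivalently, one reflects $\Sigma_k$ across the mirror and estimates the doubled minimal surface). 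Standard compactness for minimal surfaces with locally bounded second fundamental form then yields, along a further subsequence, local smooth graphical convergence with some integer multiplicity $m$ to a $G$-equivariant free boundary minimal surface $\tilde\Sigma\subset M\setminus(\mathcal{S}_0\cup\mathcal{S}_1\cup\mathcal{Y})$.

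\emph{Third,} I would prove the index inequality by contradiction. Suppose $\tilde\Sigma=\Sigma\setminus(\mathcal{S}_0\cup\mathcal{S}_1\cup\mathcal{Y})$ for a smooth $G$-equivariant free boundary minimal surface $\Sigma$ with $\ind_G(\Sigma)\ge n+1$, and pick an $(n+1)$-dimensional subspace of $\Gamma_G(N\Sigma)$ on which $Q^\Sigma$ is negative definite. Because $\Sigma\cap(\mathcal{S}_0\cup\mathcal{S}_1\cup\mathcal{Y})$ is a finite set of points, hence of codimension two in $\Sigma$, a $G$-invariant logarithmic cutoff deforms this subspace into an $(n+1)$-dimensional space of $G$-equivariant sections supported in $\Sigma\setminus(\mathcal{S}_0\cup\mathcal{S}_1\cup\mathcal{Y})$ on which $Q^\Sigma$ is still negative definite. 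Over the compact support, $\Sigma_k$ consists for large $k$ of $m$ disjoint graphical sheets over $\Sigma$; transplanting each section to all $m$ sheets, using that $\operatorname{sgn}_{\Sigma_k}$ agrees with $\operatorname{sgn}_\Sigma$ for large $k$ (since $\nu_{\Sigma_k}\to\nu_\Sigma$), yields $G$-equivariant sections on $\Sigma_k$ whose $Q^{\Sigma_k}$-values converge to $m$ times the corresponding values on $\Sigma$. Hence for $k$ large there is an $(n+1)$-dimensional space of $G$-equivariant sections on $\Sigma_k$ on which $Q^{\Sigma_k}$ is negative definite, contradicting $\ind_G(\Sigma_k)\le n$.

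I expect the genuine obstacle, and the reason this proposition requires leaving the orientation-preserving setting of \cite{Franz2023}, to be the reflection-surface case of the second step: $G$-stability alone does \emph{not} control reflection-odd normal directions, so one must invoke the positivity of the first eigenfunction, i.e.\ a localised Lemma~\ref{lem:StableGStable}, to obtain genuine stability and apply the curvature estimate. A secondary technical nuisance is keeping the sign characters $\operatorname{sgn}_{\Sigma_k}$ and $\operatorname{sgn}_\Sigma$ aligned when the limit multiplicity $m$ exceeds one, so that the transplanted test sections really belong to the equivariant class.
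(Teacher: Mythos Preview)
Your proposal is correct and follows essentially the same route as the paper: both reduce to the orientation-preserving argument of \cite{Franz2022}*{Theorem~4.3} (which you correctly unpack into index localisation, curvature estimates from local stability, and the transplantation argument for the index bound), and both identify Lemma~\ref{lem:StableGStable} as the sole new ingredient, used precisely where you place it---at points of the reflection locus, where the orthogonality hypothesis forces $\operatorname{sgn}_{\Sigma_k}=1$ so that $G$-stability upgrades to genuine stability and the Schoen-type curvature estimates apply. The paper's proof is terser (it simply asserts that the cited argument goes through on $M\setminus\mathcal{S}'$ once Lemma~\ref{lem:StableGStable} is invoked), but the content is the same.
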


\begin{proof}
Let us denote by $\mathcal{S}'$ the union of $\mathcal{S}_0$ and the points in $\mathcal{S}_1$ of isotropic type $nn$. Then, by Lemma~\ref{lem:StableGStable}, the proof of \cite{Franz2022}*{Theorem~4.3} also applies in our setting when considering $M\setminus\mathcal{S}'$. 
Indeed, around any point in $M\setminus\mathcal{S}'$, the notions of stability and $G$-stability are equivalent because $\Sigma_k$ intersects the singular locus on $M\setminus\mathcal{S}'$ orthogonally, and we can apply standard convergence arguments (see \cite{SchoenSimonYau1975}, \cite{GuangLiZhou2020}*{Theorem~1.1}).
\end{proof}

\subsection{Equivariant Dehn's lemma}
 
Given a smooth, simple closed, $G$-equivariant curve $\gamma$ in $\R^3$ let $\Gamma$ be a solution to the associated Plateau problem, i.\,e.~an area minimizing disc with boundary $\partial\Gamma=\gamma$. 
Then $\Gamma$ is not necessarily $G$-equivariant itself. 
A counterexample with $G=\dih_2$ can be found in \cite[§\,2]{Nitsche1968}.
(This phenomenon of symmetry breaking can even be observed one dimension lower: The union of north and south pole on the round sphere $\Sp^2$ is $\Ogroup(2)$-equivariant but any length minimizing geodesic on $\Sp^2$ connecting the poles clearly is not.)
However, under additional assumptions on the prescribed boundary $\gamma$, the following statement holds. 
The lemma applies in any interior geodesic ball with sufficiently small radius and it is a key result to prove the regularity of equivariant min-max limits in the Simon--Smith setting. 
 
\begin{lemma}\label{lem:EquivDehnInterior}
Given any three-dimensional Riemannian manifold $M$, let $B\subset M$ be a convex topological ball in the interior of $M$ 
and let $G$ be a group of isometries which is isomorphic to $nn$ or $\mathord*nn$ for some $n\in\N$ and acts on $B$. 
Let $\gamma\subset\partial B$ be any simple closed $G$-equivariant curve which is disjoint from the line of isotropic type~$G$ in the case $n\geq2$ respectively not contained in the singular locus of the group action in the case $n=1$. 
Then, any area minimizing disc $D$ bounded by $\gamma$ is $G$-equivariant. 
\end{lemma}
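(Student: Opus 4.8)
The plan is to show that $h(D)=D$ for each $h$ in a generating set of $G$, treating the rotations and, when $G\cong\mathord*nn$, a single reflection separately. Throughout I would rely on two standard facts about the Plateau problem for a simple closed curve $\gamma\subset\partial B$ on the boundary of a convex ball: (a)~every area minimizing disc with boundary $\gamma$ is smooth, embedded, lies in the convex hull of $\gamma$, and has interior contained in the interior of $B$; and (b)~if $D_1,D_2$ are two area minimizing discs with $\partial D_1=\partial D_2=\gamma$, then either $D_1=D_2$ or $\operatorname{int}(D_1)\cap\operatorname{int}(D_2)=\varnothing$ --- the Meeks--Yau exchange-roundoff dichotomy (a transverse interior or boundary intersection arc would be innermost-exchanged into an area minimizer carrying a genuine crease, which could be rounded off to strictly decrease area, while a tangential intersection forces coincidence by the strong maximum principle and unique continuation). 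Since every $h\in G$ is an isometry of $M$ with $h(B)=B$ and $h(\gamma)=\gamma$, the disc $h(D)$ is again area minimizing with boundary $\gamma$; hence by (b) either $h(D)=D$ or $\operatorname{int}(D)\cap\operatorname{int}(h(D))=\varnothing$, and the whole task reduces to excluding the second alternative.

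Next I would fix the geometry. Being finite, $G$ fixes the barycenter $o$ of $B$, hence fixes pointwise the geodesic axis $\ell$ through $o$ (the common fixed set of the rotations in $G$, when $n\geq2$); $\ell$ meets $\partial B$ in two points $p_\pm$, and by hypothesis $p_\pm\notin\gamma$. The curve $\gamma$ separates $\partial B$ into two open discs $E_1\ni p_+$ and $E_2$, and every rotation $r\in G$ about $\ell$, fixing $p_+$, preserves both $E_1$ and $E_2$. Now let $r$ generate the cyclic subgroup of $G$ (of order $n$; for $n=1$ this step is vacuous), and suppose $r(D)\neq D$. By (b) the $\langle r\rangle$-orbit of $D$ is a family $D_0=D,D_1=rD,\dots,D_{k-1}$ of $k\geq2$ pairwise distinct embedded discs, all with boundary $\gamma$ and pairwise disjoint interiors, on which $r$ acts as the $k$-cycle $D_i\mapsto D_{i+1}$. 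Each $D_i$ is properly embedded and separates $B$ into two balls; declare $D_i\prec D_j$ if $\operatorname{int}(D_j)$ lies in the component of $B\setminus D_i$ whose closure meets $E_2$. Disjointness and the usual separation properties of the $D_i$ make $\prec$ a strict total order on the orbit, and $r$ preserves it because it preserves $E_1$ and $E_2$. But an order-preserving permutation of a finite totally ordered set is the identity, whereas a $k$-cycle with $k\geq2$ is not: contradiction. Hence $D$ is invariant under every rotation of $G$, which settles the case $G\cong nn$.

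When $G\cong\mathord*nn$ it remains to show $D$ is invariant under one reflection $\refl$, across a totally geodesic surface $P\ni o$ containing $\ell$; together with the rotation invariance this yields the full $G$-invariance, since $\mathord*nn$ is generated by its rotations and any one reflection. I would first check $\gamma\not\subset P$: if $n\geq3$, then $r(\gamma)=\gamma\subset rP\neq P$ would force $\gamma\subset P\cap rP=\ell$, excluded by hypothesis; if $n=2$, then $\gamma\subset P$ would give $\gamma\cap P'\subset\ell$ for the second reflection plane $P'$, so $\gamma$ would be disjoint from $P'$ yet $\refl_{P'}$-invariant, which is impossible; if $n=1$, it is the hypothesis. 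Since the involution $\refl|_\gamma$ of the circle $\gamma=\partial D$ is nontrivial, $\gamma\cap P=\operatorname{Fix}(\refl|_\gamma)$ consists of exactly two points $q_1,q_2$, at each of which $\gamma$ crosses $P$ orthogonally (as $\refl$-invariance of $\gamma$ together with $\gamma\not\subset P$ forces $T_{q_i}\gamma=N_{q_i}P$). Suppose now $\refl(D)\neq D$; by (b) the interiors of $D$ and $\refl(D)$ are disjoint. At $q_1$ the tangent half-plane of $D$ has edge $T_{q_1}\gamma=N_{q_1}P$ and inward conormal lying in $T_{q_1}P$, so the linear reflection $d\refl_{q_1}$ --- which negates $N_{q_1}P$ and fixes $T_{q_1}P$ --- maps this half-plane to itself; hence $D$ and $\refl(D)$ have the same tangent plane at $q_1$. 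Thus $D$ and $\refl(D)$ are minimal surfaces sharing the boundary $\gamma$, with disjoint interiors and a common tangent plane at the boundary point $q_1$; consequently one lies on one side of the other near $q_1$, the boundary strong maximum principle for minimal surfaces forces them to agree near $q_1$, and unique continuation (real-analyticity) then yields $\refl(D)=D$ globally --- a contradiction. Therefore $D$ is $\refl$-invariant, and the proof is complete.

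The delicate points will be the rigorous proof of the dichotomy (b), especially the analysis of intersection arcs with endpoints on $\gamma$ and the roundoff estimate there, and the closing maximum-principle argument: one must check that ``disjoint interiors'' together with ``tangent at the boundary point $q_1$'' genuinely puts the two discs on opposite sides of each other near $q_1$, so that Hopf's boundary-point lemma applies to their (suitably renormalized) difference. The remaining ingredients --- embeddedness and the convex-hull property from convexity, the separation statements for properly embedded discs in a ball, and analyticity for unique continuation --- are standard but would need to be cited with care.
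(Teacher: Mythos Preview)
Your argument is correct and takes a genuinely different route from the paper's, particularly for the reflection symmetry. For the cyclic part the paper simply cites Meeks--Yau \cite{MeeksYau1981}*{Theorem~5}; your ordering argument (the orbit of $D$ under $\langle r\rangle$ is totally ordered by which side of each disc the others lie on, and a nontrivial cycle cannot preserve a total order) is essentially a sketch of what that theorem proves, and is fine. For the reflection $\refl$ across a plane $P$, however, the paper does \emph{not} invoke the dichotomy~(b) on the pair $(D,\refl D)$. Instead it works directly with $D\cap P$: assuming first that this intersection is transversal, one shows by an innermost-curve exchange (on pieces of $D$ alone) that $D\cap P$ is a single arc joining the two points of $\gamma\cap P$; then one reflects the half of $D\setminus P$ with lesser area to manufacture a $\refl$-symmetric minimizer $D''$ that agrees with $D$ on an open set and concludes $D=D''$ by unique continuation. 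The non-transversal case requires a separate approximation argument through transversal minimizing sequences. Your route---dichotomy~(b) plus the Hopf boundary-point lemma at $q_1\in\gamma\cap P$---is slicker in that it bypasses the transversality case-split and the approximation step entirely; the price is that you must justify $T_{q_1}\gamma=N_{q_1}P$ (hence the inward conormals of $D$ and $\refl D$ coincide at $q_1$), which you do correctly, and that writing $\refl D$ as a normal graph over $D$ near $q_1$ really puts you in the setting of the boundary Hopf lemma, which is straightforward since the tangent planes agree and $q_1$ is a smooth boundary point of $D$. Both approaches ultimately rest on the same Meeks--Yau regularity and embeddedness package and on unique continuation for minimal surfaces.
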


\begin{proof}
By Proposition~\ref{prop:SingLocusStrata} and Table~\ref{table:group}, the singular locus of the group action contains a line $\xi\subset B$ of isotropic type~$G$ if $n\geq2$ and, if $G$ is isomorphic $*nn$, additionally a union $\mathcal{S}_2$ of $n$ totally geodesic surfaces $P_1,\ldots,P_n\subset B$ of isotropic type $\mathord*11$. 

Existence, regularity and embeddedness of an area minimizing disc $D$ with boundary $\gamma\subset\partial B$ follows from \cite{AlmgrenSimon1979,MeeksYau1982} (see also \cite{MeeksYau1981}*{Theorem~2}). 
Since the (sub)group of type $nn$ is orientaton-preserving and acts freely on $\gamma$ by assumption, the proof of \cite{MeeksYau1981}*{Theorem~5} shows that any disc $D$ of least area with boundary $\gamma$ inherits its $nn$-equivariance.  
It remains to prove that $D$ is in fact $\mathord*nn$-equivariant in the case where $G$ is isomorphic to $\mathord*nn$. 
Given any $j\in\{1,\ldots,n\}$ let $G_j\cong\mathord*11$ be the subgroup of $G$ which is generated by the reflection $\refl_j$ across the surface $P_j$. 
Since $G$ is generated by $\refl_1,\ldots,\refl_n$, it suffices to prove that $D$ is $G_j$-equivariant. 

Let us first prove the claim in the case when the disc $D$ intersects $P_j$ transversally. 
Then the set $D\cap P_j$ is a disjoint union of smooth connected curves.  
By assumption, the boundary curve $\gamma$ is not contained in $P_j$. 
(In the case $n\geq2$ this follows from the fact that $\gamma$ is disjoint from $\xi$.)
Since $\gamma$ is simple closed and $G_j$-equivariant, $\gamma\cap P_j=\{p_1,p_2\}$.  
The set $D\cap P_j$ necessarily contains a curve $\sigma$ connecting $p_1$ and $p_2$ because $B$ is convex. 
We claim that $D\cap P_j=\sigma$.
Towards a contradiction, we assume that $(D\cap P_j)\setminus\sigma$ is nonempty. 
Since $(D\cap P_j)\setminus\sigma$ is disjoint from $\gamma$ and since $D$ is a topological disc, 
there exists a simple closed curve $\alpha\subset(D\cap P_j)\setminus\sigma$ 
with the property that $D\setminus\alpha$ consists of two connected components $D_1$ and $D_2$ such that $D_1$ is the connected component bounded by $\gamma$ and $\alpha$, and $D_2$ is contained on one side of $P_j$. 
(Indeed, $D_2$ cannot be contained in $P_j$ by the transversality assumption.)
Consider the union $D'=D_1\cup\refl_j D_2$.  
By construction, $D'$ is a topological disc with boundary $\gamma$ and the same area as $D$. 
In particular, it is still area minimizing, and therefore smooth and embedded by \cite{MeeksYau1982}. 
However, $D'$ is not smooth along $\alpha$, which is a contradiction. 

We have shown that $D\cap P_j$ consists of only one curve (the one connecting the two points $p_1,p_2\in\gamma$). 
Hence, the set $D\setminus P_j$ has exactly two connected components. 
Let $\tilde D$ denote the connected component of $D\setminus P_j$ with least area (or any of the two in case they have equal area).
Consider the topological disc $D''=\tilde D\cup\refl_j\tilde D$.
Then $D''$ is a $G_j$-equivariant, area minimizing disc with boundary $\gamma$. 
By \cite{MeeksYau1982}*{Theorem~6}, $D''$ is smooth and embedded. 
Moreover, it coincides with $D$ on $\tilde D$, therefore $D=D''$ by the unique continuation property of minimal surfaces. 
This proves $G_j$-equivariance of $D$.

Let us finally explain how to conclude in the case when the intersection of $D$ and $P_j$ is not transversal. 
In this case, there exists a minimizing sequence $\{D_k\}_{k\in\N}$ of smooth, properly embedded discs $D_k\subset B$ intersecting $P_j$ transversally with boundary $\gamma$, converging (in the parametrized sense of \cite{Morrey1948}*{Definition~2.8}) to $D$ as $k\to\infty$.
For every $k\in\N$ the set $D_k\cap P_j$ consists of one curve $\sigma_k$ connecting the boundary points $p_1$ and $p_2$ defined above, and possibly a finite union of simple closed curves in the interior of $B$. 
In any case, $D_k\setminus\sigma_k$ has exactly two connected components, both of disc type. 
Let $\tilde D_k$ be the connected component of $D_k\setminus\sigma_k$ with least area (or any of the two if both areas are equal) and consider the surface $D_k''=\tilde D_k\cup \refl_j\tilde D_k$. 
By construction $D_k''$ is an immersed, $G_j$-equivariant disc in $B$ with boundary $\gamma$ and area $\hsd^2(D_k'')\leq\hsd^2(D_k)$. 
In particular, $\{D_k''\}_{k\in\N}$ is still a minimizing sequence. 
Moreover, the intersection $D_k''\cap D_k$ contains $\tilde D_k$ for every $k\in\N$. 
By the compactness theorem \cite{Morrey1948}*{Theorem~2.12} and the regularity results in \cite{Morrey1948,AlmgrenSimon1979,MeeksYau1982}, 
a subsequence of $\{D_k''\}_{k\in\N}$ converges to a smooth, embedded, $G_j$-equivariant disc $D''$ with boundary $\gamma$.
Hence, the area $\hsd^2(\tilde D_k)=\frac{1}{2}\hsd^2(D_k'')$ is bounded from below uniformly in $k$, implying that $D''$ coincides with $D$ in an open subset. 
By the unique continuation property of minimal surfaces, $D''$ coincides with $D$ everywhere in $B$, which concludes the proof.
\end{proof}

\begin{lemma} \label{lem:EquivDehnBoundary}
Given any three-dimensional Riemannian manifold $M$ with mean convex boundary $\partial M$, let $B\subset M$ be a topological ball 
such that  $\partial B\setminus\partial M$ is convex and $B\cap\partial M$ a topological disc.
Let $G$ be a group of isometries isomorphic to $\mathord*11$ acting on $B$ and let 
$\gamma\subset\partial B\setminus\partial M$ be a $G$-equivariant curve which is not contained in the singular locus of the group action and connects two points on $\partial M$. 
Then, any area minimizing disc with prescribed boundary $\gamma$ and partially free boundary on $\partial M$ is $G$-equivariant.
\end{lemma}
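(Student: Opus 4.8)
The plan is to reduce Lemma~\ref{lem:EquivDehnBoundary} to the same doubling-and-unique-continuation argument used in the proof of Lemma~\ref{lem:EquivDehnInterior}, adapting it to the partially free boundary setting. Write $G=\{\mathrm{id},\refl\}$ with $\refl$ the reflection across a totally geodesic surface $P\subset M$, and let $P\cap B$ be the single plane of isotropic type $\mathord*11$. Since $B\cap\partial M$ is a topological disc and $\partial M$ is mean convex, existence, interior regularity, boundary regularity along the free boundary, and embeddedness of an area minimizing disc $D$ with fixed boundary $\gamma\subset\partial B\setminus\partial M$ and free boundary on $\partial M$ follow from \cite{GruterJost1986} together with \cite{MeeksYau1982} (for the fixed-boundary portion and embeddedness). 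The curve $\gamma$ connects two points $q_1,q_2\in\partial M$; since $\gamma$ is $\refl$-equivariant but not contained in $P$, it meets $P$ in exactly one point (the midpoint of the arc swapped by $\refl$), so $\gamma\cap P$ is a single point, and $q_1,q_2$ are either swapped by $\refl$ or each fixed. In either case $\refl$ maps the free boundary problem to itself.

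First I would treat the transversal case, where $D$ meets $P$ transversally (and, since $\partial M$ is $\refl$-invariant and $D$ meets $\partial M$ orthogonally, $D\cap\partial M$ meets $P\cap\partial M$ transversally as well). Then $D\cap P$ is a disjoint union of smooth curves and arcs. The arc component containing $\gamma\cap P$ must run from $\gamma$ to $\partial M$; call it $\sigma$. As in Lemma~\ref{lem:EquivDehnInterior}, I claim $D\cap P=\sigma$: if some other component $\alpha$ (a closed curve in the interior, or an arc meeting only $\partial M$) existed, it would separate $D$ into two pieces $D_1\supset\gamma$ and $D_2$ lying on one side of $P$, and replacing $D_2$ by $\refl D_2$ would yield a competitor $D'=D_1\cup\refl D_2$ of the same area, still area minimizing, hence smooth by \cite{MeeksYau1982,GruterJost1986}, yet with a corner along $\alpha$ — a contradiction. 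Consequently $D\setminus P$ has exactly two components; letting $\tilde D$ be the one of least area and setting $D''=\tilde D\cup\refl\tilde D$ gives a $G$-equivariant area minimizing disc with the same prescribed and free boundary data, smooth by the regularity theory, agreeing with $D$ on $\tilde D$, hence equal to $D$ by unique continuation.

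Second I would handle the non-transversal case by the approximation scheme of Lemma~\ref{lem:EquivDehnInterior}: take a minimizing sequence $\{D_k\}$ of properly embedded discs transversal to $P$ with the prescribed boundary $\gamma$ and free boundary on $\partial M$, converging to $D$ in the parametrized sense; for each $k$, the component $\tilde D_k$ of $D_k\setminus\sigma_k$ of least area doubles to a $G$-equivariant immersed competitor $D_k''$ with $\hsd^2(D_k'')\le\hsd^2(D_k)$, so $\{D_k''\}$ is still minimizing, a subsequence converges to a smooth embedded $G$-equivariant disc $D''$ with the right boundary behavior, and since $\hsd^2(\tilde D_k)=\tfrac12\hsd^2(D_k'')$ is bounded below, $D''$ and $D$ agree on an open set, hence everywhere by unique continuation. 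The main obstacle is bookkeeping the boundary regularity and the topology of $D\cap P$ near the free boundary $\partial M$: one must ensure that the free-boundary portion of the doubled surface $D''$ is again of class $C^\infty$ (or at least $C^{1,\alpha}$, enough for unique continuation) across $P\cap\partial M$, which requires that $\partial M$ is $\refl$-invariant and that $D$ meets $\partial M$ orthogonally along a curve transversal to $P$; this is where the partially free boundary version of \cite{GruterJost1986}'s regularity theory, rather than the interior \cite{MeeksYau1982} theory, does the work, and it is the only point where the argument genuinely departs from the proof of Lemma~\ref{lem:EquivDehnInterior}.
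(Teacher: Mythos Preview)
Your proposal is correct and follows essentially the same route as the paper's proof: reduce to the transversal case, show via the doubling-contradiction argument of Lemma~\ref{lem:EquivDehnInterior} that $D\cap P$ is a single arc from the unique point $\gamma\cap P$ to $\partial M$, then reflect the half of least area and conclude by unique continuation. The paper is more terse about the non-transversal case (it simply refers back to Lemma~\ref{lem:EquivDehnInterior}), whereas you spell out the approximation scheme and flag the free-boundary regularity near $P\cap\partial M$, but the substance is identical.
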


\begin{proof}
The group $G$ is generated by a reflection $\refl$ and the singular locus of the group action is a totally geodesic surfaces $P$ of isotropic type $\mathord*11$.  
Existence, regularity and embeddedness of an area minimizing disc $D$ with partially free boundary on $\partial M$ and fixed boundary on $\gamma$ follows from \cite{GruterJost1986}*{p.~380 and Section~5}. Alternatively, one can use \cite{Jost1986I}*{Theorems~6.1 and~6.2} (applying the theorems with $X$ being a closed manifold containing $M$, $K=X\setminus M$ such that $\partial K = \partial M$, and $\Gamma=\gamma\subset\partial B\setminus\partial M$). 
See also \cite{MeeksYau1981}*{Theorem~3\,(2)} and the remarks after the theorem for the result in a slightly different setting.

Similarly to the proof of Lemma~\ref{lem:EquivDehnInterior}, it is sufficient to deal with the case when $D$ intersects $P$ transversally. 
In this case, $D\cap P$ consists of a disjoint union of smooth connected curves.
By assumption, $\gamma$ is not contained in $P$. 
Since $\gamma$ is $G$-equivariant, $\gamma\cap P=\{p\}$ consists of a single point. 
Let $\sigma$ be the connected component of $D\cap P$ containing $p$.
Assume by contradiction that $(D\cap P)\setminus\sigma$ is not empty. 
Then, there exists another curve $\alpha \subset (D\cap P)\setminus\sigma$ such that $D\setminus\alpha$ consists of two connected components $D_1$ and $D_2$ with $D_1$ bounded by $\gamma$ and $\alpha$, and $D_2$ contained in one side of $P$. 
Now, one can reach a contradiction as in the proof of Lemma~\ref{lem:EquivDehnInterior}, with the only difference that here $\alpha$ could be a curve connecting two points in $\partial M$ (but disjoint from $\partial B\setminus\partial M$) and $D_2$ could be a disc with partially free boundary on $\partial M$.  

As a result, $D\cap P$ consists of only one connected component $\sigma$, which is a curve connecting the point  $p\in\gamma\cap P$ to a point in $P\cap\partial M$. 
Hence, $D\setminus P$ has exactly two connected components. 
Let $\tilde D$ denote a connected component with least area and consider $D''=\tilde D\cup\refl\tilde D$ as in the proof of Lemma~\ref{lem:EquivDehnInterior}. 
Then, $D''$ is another area minimizing disc, which is $G$-equivariant and coincides with $D$ on an open set. 
Therefore, $D=D''$ by the unique continuation property of minimal surfaces which concludes the proof.
\end{proof}

\subsection{The min-max theorem} \label{subsec:minmax}

Our proof of Theorem~\ref{thm:ExistenceGenusOneCatenoid} uses an equivariant min-max procedure in the Simon--Smith setting. 
This approach allows us to control the topological type of the limiting (free boundary) minimal surface. 
As explained in the introduction, the theory is based on several contributions \cite{Smith1982,ColdingDeLellis2003,DeLellisPellandini2010,Ketover2019,Ketover2016Equivariant,Ketover2016FBMS,Li2015,Franz2022,Franz2023,FranzSchulz2023}. 
In this section, we extend the existing literature by removing the assumption that the action of the finite group $G$ on the ambient manifold $M$ must be orientation-preserving. 
Let us recall the key definitions of sweepout and saturation (c.f.~\cite{FranzSchulz2023}*{Definitions~1.1 and~1.3}), with some minor modifications suitable for our setting. 

\begin{definition}[$G$-sweepout] \label{defn:G-sweepout}
Given a three-dimensional ambient manifold $M$ and a group $G$ of isometries, we say that $\{\Sigma_t\}_{t\in[0,1]^n}$ is a $n$-parameter $G$-sweepout of $M$ if the following properties are satisfied:
\begin{enumerate}[label={\normalfont(\roman*)}]
\item\label{defn:G-sweepout-i} $\Sigma_t$ is a $G$-equivariant subset of $M$ for all $t\in[0,1]^n$.
\item\label{defn:G-sweepout-ii}  Given $t\in[0,1]^n$ the set $\Sigma_t$ is either empty, or a smooth, properly embedded surface in $M$. 
\item\label{defn:G-sweepout-iii} $\Sigma_t$ depends continuously, in the sense of varifolds, on $t\in[0,1]^n$.
\item\label{defn:G-sweepout-iv} 
$\Sigma_t$ depends smoothly on $t$ restricted to the interior of any $k$-face of $[0,1]^n$ for any $k\in\{1,\ldots,n\}$. 
\end{enumerate}
\end{definition}

\begin{remark}
The $k$-faces of $[0,1]^n$ are the $k$-dimensional subsets of $[0,1]^n$ obtained by fixing $n-k$ coordinates to either $0$ or $1$. 
Condition \ref{defn:G-sweepout-iv} allows the topology of $\Sigma_t$ to change when $t\in[0,1]^n$ approaches the boundary of any face from its interior. 
One could relax the definition further by allowing finite sets of points in $M$ and parameters in $[0,1]^n$ where the smoothness in \ref{defn:G-sweepout-ii} and \ref{defn:G-sweepout-iv} may be violated. 
\end{remark}

\pagebreak[2]

\begin{definition}\label{def:SaturationWidth}
Given a $G$-sweepout $\{\Sigma_t\}_{t\in[0,1]^n}$ of $M$, we define its \emph{$G$-saturation} $\Pi$ as the set of all 
$\{\Phi(t,\Sigma_t)\}_{t\in[0,1]^n}$, where 
$\Phi\colon[0,1]^n\times M\to M$ is a smooth map such that $\Phi(t,\cdot)$ is a diffeomorphism of $M$ which commutes with the $G$-action for all $t\in[0,1]^n$.
The \emph{min-max width} of $\Pi$ is then defined as
\[
W_\Pi\vcentcolon=\adjustlimits\inf_{\{\Lambda_t\}\in \Pi~}\sup_{t\in[0,1]^n}\hsd^2({\Lambda_t}).
\]
If a sequence $\{\{\Lambda_t^j\}_{t\in[0,1]^n}\}_{j\in\N}$ in $\Pi$ is minimizing in the sense that 
$\sup_{t\in[0,1]^n}\hsd^2({\Lambda_t^j})\to W_\Pi$ as $j\to\infty$ and if $\{t_j\}_{j\in\N}$ is a sequence in $[0,1]^n$ such that $\hsd^2({\Lambda_{t_j}^j})\to W_\Pi$ as $j\to\infty$, then we call 
$\{\Lambda_{t_j}^j\}_{j\in\N}$ a \emph{min-max sequence}. 
\end{definition}

\begin{remark}
In Definition~\ref{def:SaturationWidth}, we 
do not require $\Phi(t,\cdot)$ do coincide with the identity when $t\in\partial[0,1]^n$, unlike in other references such as \cite{Franz2023,FranzSchulz2023}. 
We emphasize that all elements of the $G$-saturation $\Pi$ are $G$-sweepouts in the sense of Definition~\ref{defn:G-sweepout}. 
Moreover, $\Phi(t,\Sigma_t)$ has the same topology as $\Sigma_t$ for all $t\in[0,1]^n$.
\end{remark}
 
A version of the equivariant min-max theorem (including topological lower semicontinuity) 
has been stated in \cite{FranzSchulz2023}*{Theorems~1.4, 1.8 and~1.9} for the the case where $G$ is a group of \emph{orientation-preserving} isometries -- an assumption which we are able to drop in the following theorem.

\begin{theorem}[Equivariant min-max]\label{thm:EquivMinMax}
Let $M$ be a three-dimensional Riemannian manifold with strictly mean convex boundary and let $G$ be a finite group of isometries of $M$.
Let $\{\Sigma_t\}_{t\in[0,1]^n}$ be a $G$-sweepout of $M$. 
If the min-max width $W_\Pi$ of its $G$-saturation satisfies  
\[
W_\Pi > 0
\]
then there exists a min-max sequence $\{\Sigma^j\}_{j\in\N}$ of (smooth) $G$-equivariant surfaces converging in the sense of varifolds to 
\[
\Gamma\vcentcolon=\sum_{i=1}^\compgamma m_i\Gamma_i,
\]
where the varifolds $\Gamma_1,\ldots,\Gamma_\compgamma$ are induced by pairwise disjoint, connected, embedded free boundary minimal surfaces in $M$ and where the multiplicities $m_1,\ldots,m_\compgamma$ are positive integers.
Moreover, the support of $\Gamma$ is $G$-equivariant and its $G$-equivariant index is less or equal than $n$. 
The first Betti number $\beta_1$ and the genus complexity $\gsum$ are lower semicontinuous along the min-max sequence, in the sense that
\begin{align} \label{eq:LscTopI}
\beta_1(\Gamma)&\leq\liminf_{j\to\infty}\beta_1(\Sigma^j), &
\gsum(\Gamma)&\leq\liminf_{j\to\infty}\gsum(\Sigma^j). 
\end{align}
Finally, if all surfaces in the min-max sequence $\{\Sigma^j\}_{j\in\N}$ are orientable, then the sum of the genus and boundary complexities is lower semicontinuous along the min-max sequence in the sense that 
\begin{align} \label{eq:LscTopII}
\bsum(\Gamma)+\gsum(\Gamma)&\leq\liminf_{j\to\infty}\bigl(\bsum(\Sigma^j)+\gsum(\Sigma^j)\bigr).
\end{align}
\end{theorem}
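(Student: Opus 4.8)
The plan is to reduce this statement to the corresponding theorem in \cite{FranzSchulz2023} (stated there for orientation-preserving $G$) by a careful analysis of what goes wrong — and what does not — when $G$ contains reflections. The overall architecture of the Simon--Smith equivariant min-max argument is unaffected by the orientation issue: one runs the standard pull-tight and combinatorial argument to produce an almost minimizing (in $G$-equivariant annuli) varifold $\Gamma$ that is a min-max limit, and the positivity $W_\Pi>0$ is what guarantees $\Gamma$ is nontrivial. The two places where orientation-preservation was previously used are (a) the interior and boundary \emph{equivariant regularity} (equivariant Dehn-type) arguments, and (b) the index and topological lower-semicontinuity bounds that are imported from the compactness statement Proposition~\ref{prop:ConvergenceOfSurfaces}. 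Item (a) is exactly what Lemmata~\ref{lem:EquivDehnInterior} and~\ref{lem:EquivDehnBoundary} now supply in the presence of reflections, so the regularity theory carries over verbatim: away from the singular locus the min-max limit is a smooth embedded free boundary minimal surface, and across the reflection planes $\mathcal{S}_2$ and the axes $\mathcal{S}_1$ one uses the equivariant Dehn lemmata in small geodesic balls (interior) and small half-balls (boundary) to conclude smoothness and embeddedness of the extension through the singular locus.

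First I would set up the decomposition of $\mathcal{S}$ via Proposition~\ref{prop:SingLocusStrata} into $\mathcal{S}_0\cup\mathcal{S}_1\cup\mathcal{S}_2$ and recall that the min-max limit $\tilde\Gamma$ a priori lives on $M\setminus(\mathcal{S}_0\cup\mathcal{S}_1\cup\mathcal{Y})$ by Proposition~\ref{prop:ConvergenceOfSurfaces}; the finite bad set $\mathcal{Y}$ (of size $\le n|G|$) and the lower strata are then removed by standard removable-singularity arguments for minimal surfaces with bounded area together with the equivariant Dehn lemmata near $\mathcal{S}_2$, giving that $\Gamma$ extends to a smooth $G$-equivariant free boundary minimal surface $\sum m_i\Gamma_i$ with the $\Gamma_i$ pairwise disjoint, connected and embedded. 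Next I would derive the index bound: by the last sentence of Proposition~\ref{prop:ConvergenceOfSurfaces}, once we know $\tilde\Gamma$ extends smoothly and $G$-equivariantly to $\Gamma$, we get $\ind_G(\Gamma)\le n$. The subtle point here is that the almost-minimizing property only gives stability in annuli in $M$ minus finitely many points, and passing from that to an $n$-dimensional bound on the equivariant Morse index uses the $G$-stability/stability equivalence — but precisely this equivalence is what Lemma~\ref{lem:StableGStable} provides when $\sgn_\Sigma$ is trivial, and one applies it componentwise on each $\Gamma_i$ (on a component where $\sgn_{\Gamma_i}$ is nontrivial, equivariant functions must vanish on a reflection plane, which only makes the equivariant index smaller, so the bound is preserved a fortiori).

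For the topological lower-semicontinuity statements \eqref{eq:LscTopI} and \eqref{eq:LscTopII}, I would follow the proof of \cite{FranzSchulz2023}*{Theorems~1.8 and~1.9} essentially line by line: the key mechanism there is that in the catenoid/neck-pinch local models, genus and boundary components can only be lost in the limit, never gained, and this is a statement about the local surgery picture near the finitely many concentration points, which is orientation-agnostic. The one genuinely new check is that the local models near points of $\mathcal{S}_2$ (reflection planes) and near the axes in $\mathcal{S}_1$ of type $*nn$ are compatible with the counting: here the min-max surfaces $\Sigma^j$ meet the singular set orthogonally by hypothesis of Proposition~\ref{prop:ConvergenceOfSurfaces}, so in a small ball centered on $\mathcal{S}_2$ the picture is the reflection double of a free-boundary-type local model in a half-ball, and the Betti-number/genus bookkeeping for the quotient transfers to the double without loss. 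I expect \textbf{this last compatibility check near the reflection strata to be the main obstacle} — not because it is deep, but because it requires carefully matching the surgery/neck-analysis conventions of \cite{FranzSchulz2023} with the equivariant Dehn lemmata proved above, and verifying that the orientability hypothesis in \eqref{eq:LscTopII} (needed to separate genus from boundary complexity) is stable under passing to the double across a reflection plane. Everything else is a transcription of the existing orientation-preserving argument with Lemma~\ref{lem:StableGStable} and Lemmata~\ref{lem:EquivDehnInterior}--\ref{lem:EquivDehnBoundary} substituted at the two points where orientation was previously invoked.
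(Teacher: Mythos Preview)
Your overall strategy matches the paper's: reduce to the orientation-preserving theorem by identifying the points where that hypothesis was used and substituting Lemmata~\ref{lem:StableGStable}, \ref{lem:EquivDehnInterior}, \ref{lem:EquivDehnBoundary} and Proposition~\ref{prop:ConvergenceOfSurfaces}. However, two aspects of your write-up are off.

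First, you misattribute the regularity of the min-max limit to Proposition~\ref{prop:ConvergenceOfSurfaces}. That proposition is a compactness statement for sequences of \emph{already smooth} $G$-equivariant free boundary minimal surfaces with bounded equivariant index; it has nothing to do with showing that the varifold limit of a min-max sequence is smooth. In the paper the regularity of $\Gamma$ comes from the replacement theory in \cite{Franz2022}*{Section~13} (extending \cite{ColdingDeLellis2003,Li2015,Ketover2016Equivariant,Ketover2016FBMS}), and Proposition~\ref{prop:ConvergenceOfSurfaces} enters only afterwards, exactly where you use it a second time --- as the substitute for \cite{Franz2023}*{Theorem~4.3} in the deformation argument that yields $\ind_G(\Gamma)\le n$.

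Second, you lump the strata $\mathcal{S}_1$ and $\mathcal{S}_2$ together and propose handling both via the equivariant Dehn lemmata, and you flag the compatibility check for the topological lower semicontinuity near $\mathcal{S}_2$ as the ``main obstacle.'' The paper's organization is cleaner and sidesteps this entirely: it distinguishes five types of points and observes that points of $\mathcal{S}_2$ (isotropy $\mathord*11$) are treated \emph{exactly} as free boundary points, so the existing theory of \cite{Li2015} applies with no new argument. The genuinely new regularity case is only the points of $\mathcal{S}_1$ with isotropy $\mathord*nn$, $n\ge2$, and there Lemmata~\ref{lem:StableGStable}, \ref{lem:EquivDehnInterior}, \ref{lem:EquivDehnBoundary} plug directly into the arguments of \cite{Ketover2016Equivariant}*{Section~4.3} and \cite{Ketover2016FBMS}*{Section~7.2}. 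Once regularity is established this way, the paper asserts that the topological lower semicontinuity \eqref{eq:LscTopI}--\eqref{eq:LscTopII} follows as in \cite{FranzSchulz2023} without further modification --- so the obstacle you anticipate does not materialize.
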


\begin{remark}\label{rem:complexities}
The genus complexity $\gsum$ and the boundary complexity $\bsum$ have been defined in \cite[Definition~1.6]{FranzSchulz2023}. 
In the connected, orientable case, 
$\gsum$ simply measures the genus, and $1+\bsum$ the number of boundary components of the surface in question.
\end{remark}

\begin{remark}
The width $W_\Pi$ can possibly be realized along the boundary $\partial[0,1]^n$ in the sense that the surfaces $\Sigma^j$ in a min-max sequence may coincide with surfaces $\Lambda_t$ for $t\in\partial[0,1]^n$ of some sweepouts in the saturation. 
In any case, the convergence of the min-max sequence implies that
\(
W_\Pi=\sum_{i=1}^\compgamma m_i\hsd^2(\Gamma_i).
\)
\end{remark}

\begin{remark}\label{rem:B3}
The Euclidean unit ball $\B^3$ is a simply connected ambient manifold with nonnegative Ricci curvature and strictly convex boundary. 
Therefore, every properly embedded free boundary minimal surface in $\B^3$ is necessarily 
\emph{orientable}, 
\emph{connected} by \cite[Lemma~2.4]{FraserLi2014}, 
and has \emph{area} at least $\pi$ by \cite[Theorem~5.4]{FraserSchoen2011}. 
In particular, if $M=\B^3$ in Theorem~\ref{thm:EquivMinMax}, then $\Gamma=m_1\Gamma_1$ with $\hsd^2(\Gamma_1)\geq\pi$.
\end{remark}

\begin{proof}[Proof of Theorem~\ref{thm:EquivMinMax}]
We take \cite{Franz2022}*{Section~13} as a reference for the proof and we describe here the modifications needed to deal with group actions which are not necessarily orientation-preserving. 
Previously, this assumption has been used to prove the local regularity result for min-max surfaces stated in \cite{Franz2022}*{Theorem~13.4.3}. 
Recalling Proposition~\ref{prop:SingLocusStrata} and the notation therein, we distinguish five types of points:
\begin{itemize}
\item Points in $M$ of trivial isotropic type. In this case, the local regularity theory follows from classical results without equivariance (cf.~\cite{ColdingDeLellis2003,Li2015}).
\item Points in $\mathcal{S}_0$. These are isolated points, and therefore the regularity around them follows from the fact that having the good replacement property in annuli is sufficient to prove regularity everywhere (see \cite{ColdingDeLellis2003}*{Proposition~6.3}). Therefore, isolated points are ``removable singularities''.
\item Points in $\mathcal{S}_1$ of isotropic type $nn$ for $n\ge 2$. The local regularity around these points follows from the orientation-preserving case (cf.~\cite{Ketover2016Equivariant,Ketover2016FBMS}) and it is contained in \cite{Franz2022}*{Theorem~13.4.3}.
\item Points in $\mathcal{S}_1$ of isotropic type $*nn$ for $n\ge 2$. 
This case is not covered by the regularity theory \cite{Franz2022}*{Section~13}.
However, inspection of the arguments in \cite{Ketover2016Equivariant}*{Section~4.3} and \cite{Ketover2016FBMS}*{Section~7.2} reveals that the regularity and genus control may be deduced by inputting Lemmata~\ref{lem:StableGStable}, \ref{lem:EquivDehnInterior} and \ref{lem:EquivDehnBoundary} in place of their counterparts for orientation-preserving isometries.
\item Points in $\mathcal{S}_2$. The regularity around these points can be treated as for points at the free boundary, contained in \cite{Li2015}.  
\end{itemize}

This concludes the proof of the regularity of the resulting surface. The topological lower semicontinuity of $\beta_1$, $\gsum$ and $\bsum+\gsum$ now follows as in \cite{FranzSchulz2023}.
Finally, the proof that the support of $\Gamma$ has $G$-equivariant index less or equal to $n$ is the same as in \cite{Franz2023}. 
Indeed, the only point where we used the orientation-preserving assumption in \cite{Franz2023} is in Theorem~4.3 (see Remark~1.11 therein), which is substituted by Proposition~\ref{prop:ConvergenceOfSurfaces} in this setting.
\end{proof}

\section{Surfaces with prismatic symmetry} \label{sec:prismaticsurfaces}

In this section, we study the general structure of $\pri_2$-equivariant surfaces in $\B^3$. 
The first two lemmata concern their topology. 
We recall that the prismatic group $\pri_2$ of order $8$ is generated by the reflections across the three coordinate planes (cf.~Remark~\ref{rem:prismatic}). 
In particular, the octant 
\begin{align}\label{eqn:octant}
\Omega\vcentcolon=\{(x_1,x_2,x_3)\in\B^3\st x_1,x_2,x_3\geq0\} 
\end{align}
is a fundamental domain for the $\pri_2$-action on $\B^3$. 

\begin{lemma}\label{lem:boundary}
Let $\Sigma\subset\B^3$ be any smooth, connected, properly embedded, $\pri_2$-equivariant (not necessarily minimal) surface with boundary. 
Then $\Sigma$ is either a flat disc or the number of its boundary components is even. 
\end{lemma}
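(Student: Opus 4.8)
The plan is to analyze how the reflections in $\pri_2$ act on the set of boundary circles $\partial\Sigma \subset \partial\B^3 = \Sp^2$. Each boundary component is a smooth embedded circle in $\Sp^2$, and the three coordinate planes intersect $\Sp^2$ in three great circles $C_1 = \{x_1 = 0\}\cap\Sp^2$, etc. The group $\pri_2$ acts on the finite set $\mathcal{C}$ of boundary components of $\Sigma$; I would split into cases according to whether this action is free or has a fixed component. First I would observe that if some boundary circle $\gamma$ is fixed (as a set) by the full group $\pri_2$, then $\gamma$ is a $\pri_2$-equivariant simple closed curve in $\Sp^2$; the only such curves that are invariant under all three reflections are the three coordinate great circles themselves (any other orbit would be larger), so $\gamma$ is, say, $C_3 = \{x_3=0\}\cap\Sp^2$. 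But then the plane $\{x_3=0\}$ meets $\Sigma$ in a curve containing $\gamma$, and by equivariance and connectedness one can argue $\Sigma$ is the flat equatorial disc bounded by $C_3$: the reflection $\refl_3$ fixes $\partial\Sigma = \gamma$ pointwise and fixes $\Sigma$ setwise, so $\Sigma$ is either contained in $\{x_3=0\}$ (hence equals the flat disc since it is connected, embedded, properly embedded with that boundary) or is symmetric across it with $\gamma$ in the fixed plane — and in the latter case one checks using the other two reflections that this still forces the flat disc. This is the ``flat disc'' alternative in the statement.

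In the remaining case no boundary component is $\pri_2$-invariant. Then for each boundary circle $\gamma$, its stabilizer $H_\gamma \leq \pri_2$ is a proper subgroup, so $[\pri_2 : H_\gamma] \in \{2,4,8\}$ and the orbit of $\gamma$ has even cardinality $8/|H_\gamma|$. Thus $\partial\Sigma$ decomposes into a disjoint union of $\pri_2$-orbits each of even size, whence the total number of boundary components is even. The one subtlety is that an individual orbit could a priori have size $2$ with the two circles swapped by a reflection but each circle individually invariant under a $\Z_2 \times \Z_2$ subgroup containing a reflection — this is fine, the orbit still has even size. So the genuine content is entirely in handling the fixed-component case and showing it produces only the flat disc.

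The main obstacle I expect is the rigidity argument in the fixed-component case: showing that a $\pri_2$-invariant boundary circle forces $\Sigma$ to be the flat disc rather than some other connected equivariant surface with $C_3$ among its boundary circles (e.g. one could worry about a surface that has $C_3$ as one boundary circle and other circles elsewhere, symmetric across $\{x_3=0\}$). Here I would use that $\refl_3$ fixes the boundary circle $C_3$ pointwise: near a point $p \in C_3$, the surface $\Sigma$ meets $\partial\B^3$ and is preserved by $\refl_3$, which fixes $p$; examining the possible local pictures (a half-disc of $\Sigma$ near $p$ together with its $\refl_3$-image) shows the tangent plane to $\Sigma$ at $p$ must be $\{x_3=0\}$, and then one propagates this: $\Sigma \cap \{x_3 = 0\}$ is a $1$-manifold containing $C_3$, and the component of $\Sigma$ on one side of $\{x_3=0\}$, reflected, patches to give that $\Sigma$ is smooth across $\{x_3 = 0\}$ only if $\Sigma$ does not actually cross it there — combined with connectedness and properness this pins down $\Sigma$ as the equatorial disc. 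An alternative, cleaner route for this step: a connected properly embedded surface with a boundary circle equal to a great circle $C$ and invariant under reflection $\refl_C$ either lies in the plane of $C$ (done) or meets the plane of $C$ transversally along $C$; in the transversal case $C$ would be a boundary circle where $\Sigma$ is tangent to $\partial\B^3$ along a curve in $\{x_3=0\}$, contradicting that free boundary (or even just proper embeddedness with boundary on $\partial\B^3$) forces $\Sigma$ to meet $\partial\B^3$ cleanly — one shows transversality to $\{x_3=0\}$ along $C \subset \partial\B^3$ is incompatible with $\gamma = C$ being a full boundary component. I would present whichever of these is shortest given the conventions already fixed in the paper.
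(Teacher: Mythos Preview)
Your orbit-stabilizer strategy on the set of boundary circles is viable and genuinely different from the paper's argument. The paper works in a fundamental octant $\Omega$: it first invokes the standard dichotomy that a smooth connected $\pri_2$-equivariant surface either \emph{coincides} with one of the coordinate discs $D_i=\B^3\cap\{x_i=0\}$ or \emph{meets each $D_i$ orthogonally}; this single sentence absorbs the entire ``flat disc'' rigidity you identify as the main obstacle. In the orthogonal case, each connected arc of $\partial\Sigma\cap\Omega$ on the spherical triangle $\partial\B^3\cap\Omega$ avoids the three vertices and hence misses at least one edge, so its $\pri_2$-orbit contributes $8$, $4$, or $2$ full boundary components of $\Sigma$ according to whether the arc touches $0$, $1$, or $2$ edges. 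Your route trades this fundamental-domain bookkeeping for a cleaner group-theoretic count, at the cost of having to prove the rigidity step by hand.

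There is one real gap in your write-up. Your assertion that a $\pri_2$-invariant simple closed curve $\gamma\subset\Sp^2$ must be a coordinate great circle is correct, but ``any other orbit would be larger'' is not an argument---you are already assuming the orbit of $\gamma$ has size one. A valid justification: the induced map $\pri_2\cong\Z_2^3\to\operatorname{Homeo}(\gamma)$ has nontrivial kernel, since a finite group acting effectively on a circle is cyclic or dihedral and $\Z_2^3$ is neither; the only nontrivial elements of $\pri_2$ whose fixed set in $\Sp^2$ contains a curve are the reflections $\refl_i$ (the $\pi$-rotations fix two antipodal points, the antipodal map fixes none), so $\gamma\subset C_i$ and hence $\gamma=C_i$. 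With this in hand, your first sketch of the rigidity step---forcing $T_p\Sigma\subset\{x_3=0\}$ along $C_3$ via $\refl_3$-invariance of the tangent half-plane, then writing $\Sigma$ locally as a graph $x_3=f$ and deducing $f\equiv0$ on an open-and-closed subset of the connected surface---does go through; your ``alternative, cleaner route'' at the end is muddled and should be dropped.
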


\begin{proof}
The singular locus of the $\pri_2$-action coincides with the the union of the flat discs $D_i=\B^3\cap\{x_i=0\}$ over $i\in\{1,2,3\}$. 
Being smooth, connected and $\pri_2$-equivariant, $\Sigma$ either coincides with $D_i$ or intersects $D_i$ orthogonally. 
In the first case the proof concludes. 
In the second case, we restrict $\Sigma$ to the octant $\Omega$ defined in \eqref{eqn:octant} obtaining a properly embedded surface $\tilde{\Sigma}\vcentcolon=\Sigma\cap\Omega$ in $\Omega$ with piecewise smooth boundary. 
Let $\gamma$ be one of the connected components of $\partial\B^3\cap\tilde{\Sigma}$ and let $\tilde\gamma\subset\partial\Sigma$ be the orbit of $\gamma$ under the $\pri_2$-action. 
Then $\gamma$ is either a smooth, closed curve, or a smooth curve connecting two points on the edges of the spherical triangle $\partial\B^3\cap\Omega$, excluding the vertices.
In particular, $\gamma$ is disjoint from at least one of the three edges of $\partial\B^3\cap\Omega$. 
We conclude as follows:  
\begin{itemize}[nosep]
\item If $\gamma$ is closed, then $\tilde\gamma$ has $8$ connected components. 
\item If $\gamma$ connects two points on the same edge, then $\tilde\gamma$ has $4$ connected components. 
\item If $\gamma$ connects two points on different edges, then $\tilde\gamma$ has $2$ connected components. \qedhere
\end{itemize}
\end{proof}

The ambient coordinate functions restricted to any free boundary minimal surface $\Sigma$ in the unit ball are harmonic and satisfy the Robin boundary condition $\partial_\eta u=u$ on $\partial\Sigma$. 
Hence, they necessarily are Steklov eigenfunctions with eigenvalue one. 
A conjecture by Fraser and Li \cite{FraserLi2014} states that $\sigma=1$ is actually the \emph{first} Steklov eigenvalue on any compact, embedded free boundary minimal surface in the unit ball. 
The following result confirms this conjecture for $\pri_2$-equivariant genus one catenoids in $\B^3$. 

\begin{proposition} \label{prop:Steklov}
Let $\Sigma\subset\B^3$ be any embedded, $\pri_2$-equivariant free boundary minimal surface with genus one and two boundary components. 
Then its first Steklov eigenvalue is equal to $1$. 
\end{proposition}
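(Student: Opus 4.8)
The plan is to exploit the $\pri_2$-equivariance together with the structure theory of surfaces with prismatic symmetry developed earlier in this section, reducing the Steklov problem on $\Sigma$ to problems on the fundamental domain $\Omega$ and its faces. First I would observe that the coordinate functions $x_1,x_2,x_3$ are Steklov eigenfunctions with eigenvalue $1$, so it suffices to prove $\sigma_1(\Sigma)\geq 1$; equivalently, every Steklov eigenfunction orthogonal to the constants and to those with eigenvalue $\leq$ some threshold must have eigenvalue at least $1$. Since $\pri_2\cong\Z_2\times\Z_2\times\Z_2$ acts on $\Sigma$ and the action commutes with the Dirichlet-to-Neumann operator, the eigenspaces decompose according to the eight characters of $\pri_2$, i.e.\ according to the sign behavior $(\pm,\pm,\pm)$ of an eigenfunction under the three reflections. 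A Steklov eigenfunction that is invariant (the trivial character) descends to a Neumann/Steklov-type eigenfunction on the quotient, which is a surface with corners, and a first nonconstant such eigenfunction would force $\Sigma$ itself to be disconnected or to have $\sigma_1=0$ — but this cannot happen below eigenvalue $1$ by a nodal-domain count once one knows the topology. The real content is in the seven nontrivial characters.

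The key step is a nodal-domain argument adapted to the free boundary setting, in the spirit of the Fraser–Li conjecture verifications. For a Steklov eigenfunction $u$ with eigenvalue $\sigma<1$ lying in a nontrivial character, the nodal set of $u$ must contain the fixed-point sets of the reflections under which $u$ is odd — i.e.\ $u$ vanishes along $\Sigma\cap\{x_i=0\}$ for each $i$ in the support of the character — and hence the nodal set of $u$ is already quite large. Using Lemma~\ref{lem:boundary} (which pins down that $\Sigma$ either is a disc or has an even number of boundary components, and here genus one with two boundary components) together with the explicit description of how $\Sigma\cap\partial\B^3$ sits over the fundamental domain, I would count the number of nodal domains forced on $\Sigma$ and compare with the bound coming from Courant-type nodal domain theorems for Steklov eigenvalues (e.g.\ the first nonconstant Steklov eigenfunction has exactly two nodal domains, higher ones bounded accordingly). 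The genus-one, two-boundary-component topology enters because the relevant Euler characteristic and the number of boundary arcs of $\Sigma\cap\{x_i=0\}$ control how many components $\Sigma\setminus\{u=0\}$ can have. The upshot is that an eigenfunction with $\sigma<1$ in a nontrivial character would be one of the coordinate functions $x_i$ (which have eigenvalue exactly $1$), a contradiction, so $\sigma_1(\Sigma)\geq 1$; combined with the coordinate functions realizing $1$, equality follows.

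The main obstacle I expect is making the nodal-domain count rigorous: one must show that no nontrivial-character Steklov eigenfunction with eigenvalue strictly less than $1$ can exist, and this requires carefully bookkeeping the forced nodal set along the singular locus, the interaction of interior nodal curves with the free boundary $\partial\Sigma$ (where the Robin condition $\partial_\eta u=u$ must be respected), and the fact that multiplicity could a priori inflate the count. A clean way to organize this is to work on the fundamental domain $\Omega$: the restriction of any eigenfunction to $\tilde\Sigma=\Sigma\cap\Omega$ satisfies mixed Dirichlet (on faces where it is odd), Neumann (on faces where it is even), and Robin (on the free boundary) conditions, and one can appeal to the corresponding mixed-eigenvalue monotonicity — imposing more Dirichlet conditions only raises eigenvalues — to reduce to the worst case, a fully Dirichlet problem on $\tilde\Sigma$ away from $\partial\B^3$. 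Since $\tilde\Sigma$ is a surface with nontrivial topology only if $\Sigma$ has genus, one checks that the genus-one hypothesis is exactly what is needed for this reduced problem to have first eigenvalue $\geq 1$ — and if that direct estimate proves unwieldy, the fallback is the nodal-count contradiction above, which only uses that the candidate eigenfunction would have to coincide with a coordinate function by Hersch–type rigidity in its character class.
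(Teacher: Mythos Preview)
Your proposal sketches a character-decomposition and nodal-domain strategy, which is in the spirit of how such results are proved, but the paper's argument is structurally quite different and much shorter. The paper does no nodal-domain count at all. Instead it shows, via a Gauss--Bonnet computation on the quotient, that the fundamental-domain piece $\tilde\Sigma=\Sigma\cap\Omega$ is simply connected with exactly five boundary edges, exactly one of which lies on $\partial\B^3$; it then invokes \cite{McGrath2018}*{Theorem~4.2} as a black box, which yields $\sigma_1=1$ for any free boundary minimal surface in $\B^3$ whose fundamental-domain piece has precisely this structure. So the work in the paper's proof is purely topological (relating $\chi(\Sigma)$ to $\chi(\tilde\Sigma)$ and the corner count $j$, and solving $4\chi(\tilde\Sigma)=j-1$), while the analytic content is entirely delegated to McGrath.

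Your outline, by contrast, has concrete gaps that would need to be filled. First, the trivial-character case is not handled correctly: a nonconstant $\pri_2$-invariant Steklov eigenfunction with eigenvalue $<1$ does \emph{not} force $\Sigma$ to be disconnected or $\sigma_1=0$; it simply descends to a mixed Steklov--Neumann eigenfunction on $\tilde\Sigma$, and one still has to prove that problem has first eigenvalue at least $1$, which is the heart of the matter and not automatic. Second, for the nontrivial characters the nodal count is never actually carried out---you assert that ``the genus-one hypothesis is exactly what is needed'' without pinning down how, and the fallback that an eigenfunction ``would have to coincide with a coordinate function by Hersch-type rigidity'' is not an available theorem here. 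The ingredient you are missing is precisely the paper's topological determination of $\tilde\Sigma$: once one knows $\tilde\Sigma$ is a disc with a single free-boundary arc, the mixed-boundary monotonicity you allude to can indeed be made rigorous (and this is essentially what McGrath's theorem packages), but without that input your argument does not close.
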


\begin{proof}
As defined in \eqref{eqn:octant}, the fundamental domain $\Omega$ of the $\pri_2$-action on $\B^3$ is a four-sided wedge bounded by three planes and an octant of $\partial\B^3$. 
We claim that $\tilde\Sigma=\Sigma\cap\Omega$ is simply connected with piecewise smooth boundary consisting of exactly five edges which intersect $\partial\Sigma$ in a single connected curve. 
We may then conclude by applying \cite{McGrath2018}*{Theorem~4.2}.

Since $\Sigma$ is necessarily connected (see Remark~\ref{rem:B3}), a similar argument as for \cite[Lemma~7.2]{GirouardLagace2021} proves that $\tilde\Sigma$ is also connected. 
Moreover, $\tilde\Sigma$ meets every face of $\partial\Omega$ orthogonally because $\Sigma$ is smooth, $\pri_2$-equivariant and satisfies the free boundary condition. 
In particular, the exterior angles at the $j$ vertices of $\partial\tilde\Sigma$ are given by the angles between the faces of $\partial\Omega$, which all coincide with $\frac{\pi}{2}$. 
Let $K$ denote the Gauss curvature of $\Sigma$ and $\kappa$ the geodesic curvature along the smooth pieces of $\partial\tilde\Sigma$. 
The Gauss--Bonnet theorem implies that the Euler characteristics $\chi(\Sigma)$ and $\chi(\tilde\Sigma)$ are related by the equation 
\begin{align*}
-4\pi=
2\pi\,\chi(\Sigma)
&=\int_{\Sigma}K+\int_{\partial\Sigma}\kappa
=8\biggr(\int_{\tilde\Sigma}K+\int_{\partial\tilde\Sigma}\kappa\biggr)
=8\Bigl(2\pi\,\chi(\tilde\Sigma)-j\tfrac{\pi}{2}\Bigr)
\end{align*}
which implies $4\chi(\tilde\Sigma)=j-1$. 
On the one hand, $\chi(\tilde\Sigma)\leq1$ because $\tilde\Sigma$ is connected with nonempty boundary. 
On the other hand, $0\leq j\neq1$ because every smooth piece of $\partial\tilde\Sigma$ is either a closed curve or a curve connecting \emph{two} vertices. 
Hence, $\chi(\tilde\Sigma)=1$ and $j=5$ as claimed. 
Finally, since $\Sigma$ has two boundary components, the same argument as in the proof of Lemma~\ref{lem:boundary} reveals that $\tilde\Sigma\cap\partial\B^3$ consists of a single connected curve. 
Therefore, \cite{McGrath2018}*{Theorem~4.2} applies which concludes the proof. 
\end{proof}

The next proposition concerns the $\pri_2$-equivariant index of a $\pri_2$-equivariant free boundary minimal surface with genus one and two boundary components.
Before proving the result, we determine a ``null-direction'' for the Jacobi operator $\mathcal{L}_\Sigma=\Delta_\Sigma+\abs{A_\Sigma}^2$ on free boundary minimal surfaces $\Sigma\subset\B^3$. 
A similar computation (for the function $v^\perp$ given a fixed vector $v$ in place of $x^\perp$) can be found in the proof of \cite{FraserSchoen2016}*{Theorem~3.1}. 

\begin{lemma}\label{lem:xperp}
Let $\Sigma$ be any free boundary minimal surface in $\B^3$ and $\nu$ a choice of unit normal on $\Sigma$. 
Then the function $x^\perp\vcentcolon=\sk{x,\nu}$ satisfies $\mathcal{L}_\Sigma(x^\perp) = 0$ in $\Sigma$ and $x^\perp=0$ on $\partial\Sigma$.
\end{lemma}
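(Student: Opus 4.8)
The plan is to verify the two claimed assertions directly from the geometry of a free boundary minimal surface in $\B^3$. The boundary statement $x^\perp = 0$ on $\partial\Sigma$ is the easier of the two: along $\partial\Sigma$ the free boundary condition says that the outward conormal $\eta$ to $\partial\Sigma$ coincides with the position vector $x$ (since $\Sigma$ meets $\partial\B^3$ orthogonally and $\partial\B^3$ is the unit sphere, whose outward normal at $x$ is $x$ itself). Since $\eta$ is tangent to $\Sigma$, it is orthogonal to $\nu$, hence $x^\perp = \langle x,\nu\rangle = \langle\eta,\nu\rangle = 0$ on $\partial\Sigma$.

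For the interior equation $\mathcal{L}_\Sigma(x^\perp) = 0$, I would first record the standard first- and second-order identities for the position vector field $x$ restricted to a minimal surface. Decompose $x = x^\top + x^\perp\nu$ along $\Sigma$. Using that $\Sigma$ is minimal (zero mean curvature) in the flat ambient space $\R^3$, the tangential Laplacian of the coordinate functions is $\Delta_\Sigma x = H\nu = 0$, so each component of $x$ is harmonic on $\Sigma$. Then compute $\Delta_\Sigma\langle x,\nu\rangle$ by expanding with the product rule and the Weingarten/Codazzi relations: the second fundamental form $A$ of $\Sigma$ enters through $\nabla_\Sigma\nu = -A$ and through $\nabla_\Sigma x^\top$ (which contributes an $A$-term because the tangential part of the flat connection sees the shape operator). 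Carrying this out, one obtains the Simons-type identity $\Delta_\Sigma\langle x,\nu\rangle = -|A|^2\langle x,\nu\rangle$, equivalently $\bigl(\Delta_\Sigma + |A|^2\bigr)x^\perp = 0$, which is exactly $\mathcal{L}_\Sigma(x^\perp) = 0$ since $\Ric_{\R^3} \equiv 0$.

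Concretely I would argue as follows. Fix a point and a local orthonormal frame $\{e_1,e_2\}$ for $T\Sigma$ with $\nabla_\Sigma e_i = 0$ at that point. Write $\partial_i$ for $e_i$-derivatives. Then $\partial_i\langle x,\nu\rangle = \langle e_i,\nu\rangle + \langle x,\nabla_{e_i}\nu\rangle = \langle x, \bar\nabla_{e_i}\nu\rangle$ where $\bar\nabla$ is the flat connection and $\langle e_i,\nu\rangle = 0$; and $\bar\nabla_{e_i}\nu = -\sum_j A_{ij}e_j$. Differentiating again and summing over $i$, one uses $\bar\nabla_{e_i}x = e_i$, the Codazzi equation $\partial_i A_{ij} = \partial_j A_{ii}$ (which vanishes upon summation by minimality $\sum_i A_{ii}=0$), and $\bar\nabla_{e_i}e_j = A_{ij}\nu$ at the point, to collect the terms into $\sum_{i}\partial_i\partial_i\langle x,\nu\rangle = -|A|^2\langle x,\nu\rangle$. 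The main technical point — and the only place where care is needed — is bookkeeping the normal and tangential components correctly when passing from $\bar\nabla$ to $\nabla_\Sigma$, and invoking minimality at the right moment so that the Codazzi term and the mean-curvature term drop out; this is the routine ``$v^\perp$'' computation of Fraser--Schoen adapted to $v = x$, and there is no genuine obstacle beyond getting the signs right.
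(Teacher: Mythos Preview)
Your proposal is correct and takes essentially the same approach as the paper: both verify the boundary condition from the orthogonality of $\nu$ and $x$ along $\partial\Sigma$, and both compute $\Delta_\Sigma x^\perp = -|A|^2 x^\perp$ directly via the Weingarten relation, flatness of the ambient space, and $H=0$. The only cosmetic difference is that the paper first identifies $\nabla x^\perp = S(\hat x)$ and then takes the divergence using the commutator formula $D_{E_i}D_{\hat x}\nu = D_{\hat x}D_{E_i}\nu + D_{[E_i,\hat x]}\nu$, whereas you work in components with the Codazzi equation; these are equivalent bookkeeping choices.
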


\begin{proof}
Denote by $\nabla$ and $D$ the covariant derivatives on $\Sigma$ and $\B^3$, respectively.
For any vector field $X$ tangent to $\Sigma$, we have that
\begin{equation*}
X(x^\perp) = X \sk {x,\nu} = \sk{D_X x,\nu}+\sk{x,D_X \nu} = \sk{X,\nu}-A(X, \hat x)= -A(X, \hat x) = 
\sk{S(\hat x),X} ,
\end{equation*}
where $\hat x = x - x^\perp \nu$ is the tangential component of $x$ and $S(\hat x) = D_{\hat x}\nu$.
Hence, \(\nabla x^\perp = S(\hat x)\).
To compute the Laplacian of $x^\perp$ we consider a normal frame $\{E_1,E_2\}$ on $\Sigma$ at a point $p$. 
Then, at $p$, we have
\begin{align}
\Delta_\Sigma x^\perp &=\operatorname{div}_\Sigma\bigl(S(\hat x)\bigr)
=\sum_{i=1}^2\sk[\big]{\nabla_{E_i}(S(\hat x)),E_i}
=\sum_{i=1}^2\sk[\big]{D_{E_i}(D_{\hat x}\nu),E_i} 
\notag\\
&=\sum_{i=1}^2 \sk[\big]{\nabla_{\hat x}(S(E_i)),E_i} + \sum_{i=1}^2 \sk{S(E_i),\nabla_{E_i}\hat x - \nabla_{\hat x}E_i} 
\label{eqn:star}\\
&=\hat x(H) + \sum_{i=1}^2 \sk[\big]{S(E_i),\nabla_{E_i}\hat x - 2\nabla_{\hat x}E_i}
\notag\\
&=\sum_{i=1}^2 \sk[\big]{S(E_i),\nabla_{E_i}{\hat x}} 
 =\sum_{i=1}^2 \sk[\big]{S(E_i),D_{E_i}{x}-D_{E_i}{(x^\perp \nu)}}
\notag\\
&= H-\sum_{i=1}^2\sk[\big]{S(E_i),D_{E_i}(x^\perp \nu)}= -\sum_{i=1}^2 
\sk[\big]{S(E_i),S(E_i)}x^\perp = -\abs A^2 x^\perp.
\notag
\end{align}
For \eqref{eqn:star} we used that $D_{E_i}D_X\nu = D_XD_{E_i} \nu + D_{[E_i,X]}\nu$, since $\B^3$ is flat. 
We also used that $H=0$ and $\nabla_{\hat x} E_i = (D_{\hat x} E_i)^\top=0$.
This proves that $\mathcal{L}_\Sigma (x^\perp) = 0$ in $\Sigma$. 
The fact that $x^\perp=0$ on $\partial\Sigma$ follows from the free boundary condition.
\end{proof}

\begin{proposition} \label{prop:EquivIndexMoreThan2}
Any $\pri_2$-equivariant, embedded, free boundary minimal surface $\Sigma\subset\B^3$ with genus one and two boundary components has $\pri_2$-equivariant index at least $2$.
\end{proposition}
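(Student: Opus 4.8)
The plan is to exhibit at least two linearly independent functions in $C^\infty_{\pri_2}(\Sigma)$ on which the quadratic form $Q_\Sigma$ is negative, using the ambient coordinate functions together with the null-direction $x^\perp$ from Lemma~\ref{lem:xperp}. The starting observation is that each Cartesian coordinate $x_i$, restricted to $\Sigma$, is a Jacobi field in the weak sense: since $\Sigma$ is free boundary minimal in $\B^3$, one has $\Delta_\Sigma x_i = -\abs{A}^2 x_i$ and $\partial_\eta x_i = x_i$ on $\partial\Sigma$, so $x_i$ solves the eigenvalue problem \eqref{eq:EigenvProblem} with eigenvalue $0$ but with the \emph{wrong} (Robin, not Neumann-type with $\II^{\partial M}(\nu,\nu)$) boundary term; more precisely $Q_\Sigma(x_i,x_i)=\int_{\partial\Sigma}(x_i\partial_\eta x_i + x_i^2)\,d\hsd^1 = 2\int_{\partial\Sigma} x_i^2\,d\hsd^1 > 0$ on $\B^3$ where $\II^{\partial M}(\nu,\nu)=1$, so the $x_i$ themselves are not test functions for instability. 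The genuinely negative direction is $x^\perp=\langle x,\nu\rangle$: by Lemma~\ref{lem:xperp} it lies in the kernel of $\mathcal{L}_\Sigma$ and vanishes on $\partial\Sigma$, hence $Q_\Sigma(x^\perp,x^\perp)=0$, and by the unique continuation property $x^\perp\not\equiv 0$; a standard argument (perturbing a zero-energy Dirichlet function that is not an eigenfunction of the Robin problem, or noting $x^\perp$ is $L^2$-orthogonal to the positive first eigenfunction while having zero energy) shows the index without equivariance is already at least... this only gives one direction, so the real content is to produce \emph{two} equivariant ones.

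The key is to split $x^\perp$ according to the $\pri_2$-symmetry type. Under the reflection $\refl_i$ across $\{x_i=0\}$ we have $x^\perp\circ\refl_i = \operatorname{sgn}_\Sigma(\refl_i)\,(x^\perp)$ only if $x^\perp$ transforms like the sign character; but in general the three ambient coordinate-direction pieces decompose $x$ itself, so I would instead consider, for a fixed linear functional, the functions built from $\langle x,\nu\rangle$ localized on pieces of $\Sigma$, or more cleanly: use the three functions $u_i := \langle x, \nu\rangle$ is already $\pri_2$-type-compatible, and additionally a second function obtained from a \emph{rotation} vector field. Concretely, for the rotation $R_k$ of angle $\pi$ about the $x_k$-axis, the associated Killing field on $\R^3$ restricts to a Jacobi field on $\Sigma$, and its normal component $w_k := \langle e_k\times x, \nu\rangle$ satisfies $\mathcal{L}_\Sigma w_k = 0$ (rotations are ambient isometries fixing $\B^3$) with $\partial_\eta w_k = w_k - \langle$tangential correction$\rangle$; one checks $w_k$ vanishes appropriately on the singular planes and has the right equivariance. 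The span of $x^\perp$ and a suitable rotational normal component, once one verifies both lie in $C^\infty_{\pri_2}(\Sigma)$ and are linearly independent (which uses genus $= 1$, two boundary components, and embeddedness so that $\Sigma$ is not one of the coordinate discs or the critical catenoid), gives a $2$-dimensional subspace on which $Q_\Sigma$ is negative semidefinite with zero energy; then the usual trick — if $Q_\Sigma$ were negative semidefinite but had a kernel on this subspace, the kernel functions would be smooth solutions of \eqref{eq:EigenvProblem} with $\lambda=0$, contradicting the boundary condition since our functions satisfy the Robin condition $\partial_\eta u = u \ne -\II^{\partial M}(\nu,\nu)u$ — upgrades negative semidefinite to negative definite, yielding $\ind_{\pri_2}(\Sigma)\ge 2$.

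In carrying this out I would proceed in the order: (1) record that coordinate functions and rotational Killing normal components are $\mathcal{L}_\Sigma$-Jacobi fields, via Lemma~\ref{lem:xperp} and an analogous computation using that $\B^3$ is flat and rotations are isometries; (2) compute their boundary behavior, extracting that $Q_\Sigma$ evaluated on $x^\perp$ and on the appropriate rotational component equals a boundary integral that is $\le 0$ after using the free boundary condition and $\II^{\partial M}=\mathrm{id}$ — here $x^\perp\equiv 0$ on $\partial\Sigma$ makes that term vanish, and for the rotational field one must check the tangential part of the Killing field along $\partial\Sigma$ interacts correctly; (3) identify which symmetry characters of $\pri_2$ these functions carry and confirm membership in $C^\infty_{\pri_2}(\Sigma)$ by matching $\operatorname{sgn}_\Sigma$ on the three generating reflections, using that the normal $\nu$ flips across a coordinate plane iff $\Sigma$ is transverse to it, which holds by Lemma~\ref{lem:boundary} since $\Sigma$ is not a flat disc; (4) prove linear independence of the two candidates, invoking that a nontrivial linear relation would force $\Sigma$ to be rotationally symmetric or planar, excluded by the topological hypotheses together with McGrath's uniqueness \cite{McGrath2018}; (5) run the semidefinite-to-definite upgrade. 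The main obstacle I anticipate is step (4) combined with step (3): ensuring that the second Jacobi field is not an equivariant-trivial multiple of the first and genuinely lies in the equivariant function space requires a careful case analysis of how $\nu$ and $x^\perp$ transform under each reflection and rotation of $\pri_2$, and ruling out degenerate configurations may well be where the hypotheses "genus one, two boundary components, embedded" are essential rather than cosmetic.
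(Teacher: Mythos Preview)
Your approach has a genuine gap at step~(3): the rotational Killing field is \emph{not} $\pri_2$-equivariant, so its normal component $w_k=\langle e_k\times x,\nu\rangle$ does not lie in $C^\infty_{\pri_2}(\Sigma)$. Concretely, if $\refl_i$ denotes reflection across $\{x_i=0\}$ with $i\ne k$, then $(\refl_i)_*(e_k\times x)=-\,e_k\times x$, because a rotation about the $x_k$-axis conjugated by a reflection in a plane containing that axis is the inverse rotation. Hence $w_k\circ\refl_i=-\operatorname{sgn}_\Sigma(\refl_i)\,w_k$, which is the wrong sign. In fact there is no nonzero $\pri_2$-equivariant Killing field on $\B^3$ at all: every translation and every infinitesimal rotation flips sign under at least one coordinate reflection. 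So you cannot manufacture a second equivariant test function from ambient isometries; the only surviving candidate from your list is $x^\perp$ itself. There is a second problem even if equivariance held: a rotational Killing field is tangent to $\partial\B^3$, so $w_k$ satisfies the \emph{exact} Robin condition in \eqref{eq:EigenvProblem} and is a genuine kernel element, whence your semidefinite-to-definite upgrade in step~(5) cannot apply to it.

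The paper's argument extracts two negative directions from the single function $x^\perp$. On the fundamental octant $\tilde\Sigma=\Sigma\cap\Omega$, the restriction of $x^\perp$ is an eigenfunction with eigenvalue zero for $\mathcal{L}_{\tilde\Sigma}$ subject to Dirichlet conditions on $\partial\tilde\Sigma\cap\partial\B^3$ and Neumann conditions elsewhere. The topological hypotheses (genus one, two boundary components) are used, via the Gauss--Bonnet computation in the proof of Proposition~\ref{prop:Steklov}, to show that $\tilde\Sigma$ meets one coordinate axis at two distinct points with opposite normals, so $x^\perp$ changes sign and has at least two nodal domains. Courant's nodal domain theorem then forces the \emph{second} equivariant Dirichlet eigenvalue $\lambda_2^D$ to be $\le 0$. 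Finally, since Dirichlet test functions vanish on $\partial\Sigma$, the boundary term in $Q_\Sigma$ drops out and one gets $\lambda_2\le\lambda_2^D\le 0$ for the Robin problem, with strict inequality because a Dirichlet eigenfunction cannot simultaneously satisfy the Robin condition. The moral is that one Jacobi field with a sign change suffices; a second independent one is neither available nor needed.
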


\begin{proof}
Given $\Omega$ as in \eqref{eqn:octant} we recall from the proof of Proposition \ref{prop:Steklov} that $\tilde\Sigma=\Sigma\cap\Omega$ is simply connected with piecewise smooth boundary consisting of exactly five edges which intersect $\partial\Omega\cap\partial\B^3$ in a single connected curve. 
In particular, one of three planar faces of $\partial\Omega$ contains two (nonconsecutive) edges of $\partial\tilde\Sigma$ which in turn implies that one of the coordinate axes 
contains two (consecutive) vertices of $\partial\tilde\Sigma$. 
Without loss of generality we may assume this to be the  $x_1$-axis. 

The $\pri_2$-equivariant index of $\Sigma$ coincides with the index of $\tilde\Sigma$ subject to the Neumann boundary condition on $\partial\tilde\Sigma\setminus\partial\B^3$, and the usual Robin boundary condition as in \eqref{eq:EigenvProblem} on $\partial\tilde\Sigma\cap\partial\B^3$.

Let $\nu$ be a choice of unit normal vector on $\Sigma$ and consider the function $u(x)=x^\perp\vcentcolon=\sk{x,\nu}$. 
Then $\mathcal{L}_{\Sigma}u=0$ in $\Sigma$ and $u=0$ on $\partial\Sigma$ by Lemma~\ref{lem:xperp}. 
Moreover, $u\in C^\infty_{\pri_2}(\Sigma)$ and thus $\tilde{u}\vcentcolon=u|_{\tilde\Sigma}$ satisfies the Neumann boundary condition along $\partial\tilde\Sigma\setminus\partial\B^3$.
Therefore, $\tilde{u}$ is an eigenfunction with eigenvalue zero for the Jacobi operator $\mathcal{L}_{\tilde\Sigma}$ on $\tilde\Sigma$ subject to the Dirichlet boundary condition on $\partial\tilde\Sigma\cap\partial\B^3$ and the Neumann boundary condition on $\partial \tilde\Sigma\setminus\partial\B^3$.
Moreover, $\tilde{u}$ has at least two nodal domains because as shown above, $\tilde\Sigma$ intersects the $x_1$-axis twice with opposite normal vector. 
By Courant's nodal domain theorem (see also \cite{Franz2022}*{Theorem~11.2.1}), $\tilde{u}$ is not the first eigenfunction of $\mathcal{L}_{\tilde\Sigma}$ and therefore the second eigenvalue of $\mathcal{L}_{\tilde\Sigma}$ with these boundary conditions is less or equal than $0$. 

Let $\varphi_1,\varphi_2 \in C^\infty_{\pri_2}(\Sigma)$ be the first two $\pri_2$-equivariant eigenfunctions of $\mathcal{L}_{\Sigma}$ with Dirichlet boundary conditions on $\partial\Sigma$, and let $\lambda_1^D\le\lambda_2^D$ be the associated eigenvalues.
Observe that $\varphi_1|_{\tilde\Sigma}$ and $\varphi_2|_{\tilde\Sigma}$ coincide with the first two eigenfunctions of $\mathcal{L}_{\tilde\Sigma}$ with Neumann boundary condition on $\partial\tilde\Sigma\setminus\partial \B^3$ and with Dirichlet boundary condition on $\partial\tilde\Sigma\cap\partial\B^3$. 
By the previous argument, we have that $\lambda_1^D\le\lambda_2^D\le 0$.
Moreover, any linear combination $u=a\varphi_1+b\varphi_2$ with $(a,b)\in\R^2\setminus\{0\}$ satisfies 
\begin{align*}
Q_\Sigma(u,u) &= -\int_\Sigma u \mathcal{L}_\Sigma u \,d \hsd^2 
+ \int_{\partial \Sigma}\bigl(u\partial_\eta u + \II^{\partial M}(\nu,\nu) u^2 \bigr) \,d \hsd^{1} 
= -\int_\Sigma u \mathcal{L}_\Sigma u \,d \hsd^2 \\
&= \int_\Sigma (a\varphi_1+b\varphi_2) (\lambda_1^Da\varphi_1+\lambda_2^Db\varphi_2) \,d \hsd^2 \\
&= \int_\Sigma \bigl(\lambda_1^D(a\varphi_1)^2+\lambda_2^D(b\varphi_2)^2\bigr) \,d \hsd^2
\le \lambda_2^D\int_\Sigma u^2\, d\hsd^2,
\end{align*}
where we used that $\varphi_1,\varphi_2$ are $L^2(\Sigma)$-orthogonal.

Now we argue that the second $\pri_2$-equivariant eigenvalue $\lambda_2$ of $\mathcal{L}_{\Sigma}$ subject to the Robin boundary condition on $\partial\Sigma$ (the standard ones as in \eqref{eq:EigenvProblem}, given by the operator associated to the second variation of the area) is strictly less than $0$. 
This is classical and follows from the variation characterization of the eigenvalues, cf.~\eqref{eq:VarCharact}. 
Indeed, 
\[
\lambda_2=
\adjustlimits\inf_{S<C^\infty_{\pri_2}(\Sigma),\;\dim S=2~}\sup_{0\not = u\in S}
\frac{Q_\Sigma(u,u)}{\int_\Sigma u^2}
\leq\sup_{0\not = u = a\varphi_1 + b\varphi_2}\frac{Q_\Sigma(u,u)}{\int_\Sigma u^2} 
=\lambda_2^D \leq 0.
\]
We conclude $\lambda_2<0$ because otherwise $\varphi_2$ would also be an eigenfunction subject to the Robin boundary condition, but this would imply $\varphi_2=0$. 
This proves that the $\pri_2$-equivariant index of $\Sigma$ is at least $2$, as claimed.
\end{proof}

\begin{proposition}\label{prop:IsolatedCritCat}
The critical catenoid is isolated in the space of embedded, $\pri_2$-equivariant free boundary minimal surfaces in $\B^3$, with respect to the varifold topology. 
\end{proposition}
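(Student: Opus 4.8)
The plan is to argue by contradiction. Suppose $\K$ were not isolated; then there would be a sequence $\{\Sigma_k\}_{k\in\N}$ of embedded, $\pri_2$-equivariant free boundary minimal surfaces in $\B^3$ with $\Sigma_k\neq\K$ for every $k$ and $\vard(\Sigma_k,\K)\to0$, that is, $\Sigma_k\to\K$ in the sense of varifolds. The limiting varifold is $\K$ counted with multiplicity one, so the mass ratios of $\Sigma_k$ over small balls converge to those of $\K$: to $1$ at interior points of $\K$ and to $\tfrac12$ at the free boundary points, where $\K$ meets $\partial\B^3$ orthogonally. Moreover, each $\Sigma_k$ is a stationary integral varifold with free boundary on $\partial\B^3$.

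The next step is to promote this weak convergence to smooth, multiplicity-one graphical convergence. At an interior point $p\in\K\setminus\partial\B^3$, Allard's regularity theorem applies: for $k$ large, $\Sigma_k$ is, in a fixed ball around $p$, a single $C^{1,\alpha}$ graph over $\K$ converging to zero, and Schauder estimates for the minimal surface equation bootstrap this to convergence with uniform $C^\infty$ bounds. At a free boundary point $p\in\K\cap\partial\B^3$ the same holds by the free boundary Allard-type regularity theorem of \cite{GruterJost1986} together with boundary regularity for the free boundary problem, using that each $\Sigma_k$ meets $\partial\B^3$ orthogonally and that the boundary mass ratios tend to $\tfrac12$. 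Covering the compact surface $\K$ by finitely many such balls, we obtain that for all sufficiently large $k$ the surface $\Sigma_k$ is a normal graph $\{x+u_k(x)\nu(x):x\in\K\}$ with $\nm{u_k}_{C^2(\K)}\to0$, where $\nu$ is a unit normal along $\K$.

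In particular, for large $k$ the surface $\Sigma_k$ is diffeomorphic to $\K$, hence is an embedded, $\pri_2$-equivariant free boundary minimal \emph{annulus} in $\B^3$. Since $\pri_2$ is a prismatic symmetry group, McGrath's uniqueness theorem \cite{McGrath2018}*{Theorem~1} forces $\Sigma_k=\K$, contradicting $\Sigma_k\neq\K$ and proving the proposition.

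I expect the part requiring the most care to be the behaviour near $\partial\B^3$: one must verify that the varifold convergence is multiplicity one up to the boundary and that the free boundary regularity theory yields smooth graphical convergence across $\partial\K$ while respecting the orthogonality condition. The interior estimates and the concluding appeal to McGrath's classification are comparatively routine. One could alternatively try to invoke Proposition~\ref{prop:ConvergenceOfSurfaces}, but that requires an a priori bound on the $\pri_2$-equivariant index of the $\Sigma_k$, which is not available here, so the direct regularity argument is preferable.
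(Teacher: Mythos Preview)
Your argument is correct and coincides with the paper's primary proof: contradiction, Allard regularity in the interior together with the Gr\"uter--Jost boundary version to upgrade varifold convergence to smooth graphical convergence, and then McGrath's uniqueness theorem to force $\Sigma_k=\K$. The paper additionally records an alternative endgame---smooth convergence would produce a $\pri_2$-equivariant Jacobi field on $\K$, which does not exist by \cite{MaximoNunesSmith2017}*{Proposition~6.15}---but your route is exactly the one presented first.
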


\begin{proof}
Assume by contradiction that there is a sequence $\{\Lambda^k\}_{k\in\N}$ of embedded, $\pri_2$-equivariant free boundary minimal surfaces, different than the critical catenoid, converging in the sense of varifolds to the critical catenoid $\K$ (with multiplicity one). 
Then, by Allard's regularity theorem (see \cite{Allard1972} and \cite{GruterJost1986Allard} for the result in the interior and at the boundary, respectively), 
the convergence of $\Lambda^k$ to the critical catenoid is smooth.
Therefore, $\Lambda^k$ is an embedded, $\pri_2$-equivariant free boundary minimal annulus for all sufficiently large $k$, which implies that $\Lambda^k$ coincides with the critical catenoid by \cite{McGrath2018}*{Theorem~1}.

In fact one can also argue as follows, without using the uniqueness theorem for the critical catenoid. 
By \cite{ColdingMinicozzi2000}*{Lemma~A.1}, the smooth convergence $\Lambda^k\to\K$ as $k\to\infty$ implies the existence of a $\pri_2$-equivariant Jacobi field on the critical catenoid. 
However, the critical catenoid does not have any $\pri_2$-equivariant Jacobi field by \cite{MaximoNunesSmith2017}*{Proposition~6.15}. 
This proves that the sequence $\{\Lambda^k\}_{k}$ is eventually constant.
\end{proof}

\section{Construction of two-parameter sweepouts}
\label{sec:sweepout}

The critical catenoid $\K\subset\B^3$ is the intersection of $\B^3$ with the complete catenoid in $\R^3$ which has been rescaled such that it intersects $\partial\B^3$ orthogonally.  
The following lemma is inspired by \cite[Appendix~A]{CarlottoFranzSchulz2022} and contains the construction of an optimal one-parameter $\Ogroup(2)$-sweepout for the critical catenoid.    

\begin{lemma}\label{lem:catenoid}
Given any $a>0$ and $h\in\interval{0,1}$ the surface $K_{a,h}\subset\R^3$ of revolution obtained by rotating the graph of the function $r_a\colon[-h,h]\to\R$ given by 
\begin{align*}
r_a(z)&=\frac{\cosh(a z)}{\cosh(a h)}\sqrt{1-h^2}  
\end{align*}
around the $x_3$-axis has the following properties: 
\begin{enumerate}[label={\normalfont(\roman*)}]
\item\label{lem:catenoid-i}
$K_{a,h}$ is a smooth, $\Ogroup(2)$-equivariant, properly embedded annulus in $\B^3$ with 
boundary $\partial K_{a,h}=\partial\B^3\cap \{\abs{x_3}=h\}$ and area $\hsd^2(K_{a,h})<2\pi$. 
\item\label{lem:catenoid-ii}
There exist $h_*\in\interval{0,1}$ and $a_*>1$ such that $K_{a_*,h_*}$ coincides with the critical catenoid $\K$. 
\item\label{lem:catenoid-iii} 
There exists a smooth function $\alpha\colon[0,1]\to\interval{1,\infty}$ satisfying $\alpha(h_*)=a_*$ such that $\{K_{\alpha(h),h}\}_{h\in\interval{0,1}}$ is an optimal sweepout for the critical catenoid $\K$ in the sense that 
$\hsd^2(K_{\alpha(h),h})\leq\hsd^2(\K)$ for all $h\in\interval{0,1}$ and 
$\hsd^2(K_{\alpha(h),h})\to0$ for $h\searrow0$ and for $h\nearrow1$. 
\item\label{lem:catenoid-iv}
$K_{a,h}$ converges in the sense of varifolds to $\B^3\cap\{\abs{x_3}=h\}$ as $a\to\infty$.  
\end{enumerate}
\end{lemma}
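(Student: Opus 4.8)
The plan is to reduce every claim to the explicit form of $r_a$ together with the classical area formula for surfaces of revolution. Throughout write $t\vcentcolon=ah$ and $b\vcentcolon=\sqrt{1-h^2}/\cosh(ah)$, so that $r_a(z)=b\cosh(az)$, and set $P\vcentcolon=r_a'(h)=ab\sinh(ah)=a\sqrt{1-h^2}\tanh(ah)$. For \ref{lem:catenoid-i}, smoothness and $\Ogroup(2)$-equivariance are clear because $r_a$ is smooth, positive and even, so $K_{a,h}$ is the image of the embedding $(z,\theta)\mapsto(r_a(z)\cos\theta,r_a(z)\sin\theta,z)$ of an annulus, invariant under rotations about and reflections through the $x_3$-axis; the identity $r_a(\pm h)=\sqrt{1-h^2}$ gives $\partial K_{a,h}=\partial\B^3\cap\{\abs{x_3}=h\}$, and proper embeddedness in $\B^3$ follows once one checks that $r_a(z)^2+z^2\le1$ on $[-h,h]$ with equality only at $z=\pm h$, which by evenness reduces to observing that $z\mapsto(1-z^2)-r_a(z)^2$ vanishes at $z=h$ and has strictly negative derivative on $(0,h)$. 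For the area bound, substituting $u=ab\sinh(az)$ in $\hsd^2(K_{a,h})=2\pi\int_{-h}^{h}r_a\sqrt{1+r_a'^2}\,dz$ yields the closed form $\hsd^2(K_{a,h})=\frac{2\pi}{a^2}\bigl(P\sqrt{1+P^2}+\operatorname{arcsinh} P\bigr)$; since a one-line computation gives $a^2=P^2+t^2+P^2/\sinh^2 t$, the bound $\hsd^2(K_{a,h})<2\pi$ is equivalent to $P\sqrt{1+P^2}+\operatorname{arcsinh} P<P^2+t^2+P^2/\sinh^2 t$ for all $P,t>0$, which I would prove by minimising the right-hand side over $t$ (the minimum being $P^2+t_*^2+t_*\tanh t_*$, where $t_*$ solves $t_*\sinh^2 t_*\tanh t_*=P^2$) and then checking $\operatorname{arcsinh} P+P\bigl(\sqrt{1+P^2}-P\bigr)<t_*^2+t_*\tanh t_*$. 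This last inequality, which is tight in the limit $P\searrow0$ (the degenerate cylinder at $h=1/\sqrt2$), is the step I expect to be the main technical obstacle.

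For \ref{lem:catenoid-ii}, note that $K_{a,h}$ is a piece of a complete catenoid exactly when $b=1/a$, i.e.\ $a\sqrt{1-h^2}=\cosh(ah)$, and that such a piece meets $\partial\B^3$ orthogonally exactly when $r_a'(h)=\sqrt{1-h^2}/h$, which combined with $b=1/a$ becomes $ah\tanh(ah)=1$; so I let $t_*$ be the unique positive solution of $t\tanh t=1$ and put $h_*\vcentcolon=t_*/\sqrt{t_*^2+\cosh^2 t_*}\in\interval{0,1}$ and $a_*\vcentcolon=\sqrt{t_*^2+\cosh^2 t_*}>1$, so that $K_{a_*,h_*}$ is a rotationally symmetric free boundary minimal annulus in $\B^3$, hence the critical catenoid. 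For \ref{lem:catenoid-iii} I would simply take $\alpha\equiv a_*$ (smooth, and $>1$). Since $\K$ is free boundary minimal it is critical for area among all surfaces with boundary on $\partial\B^3$, in particular within the family $\{K_{a,h}\}$, so $(a_*,h_*)$ is a critical point of $g(a,h)\vcentcolon=\hsd^2(K_{a,h})$ and in particular $h_*$ is a critical point of $h\mapsto g(a_*,h)$. The closed formula together with $P\to0$ as $h\to0$ or $h\to1$ yields $g(a_*,h)\to0$ at both endpoints, so it remains only to verify that $h\mapsto g(a_*,h)$ has no further critical point in $\interval{0,1}$, which makes $h_*$ its global maximum with value $g(a_*,h_*)=\hsd^2(\K)$ and hence $\hsd^2(K_{a_*,h})\le\hsd^2(\K)$ throughout; this is a one-variable verification via the explicit formula (alternatively, the index of the Hessian of $g$ at $(a_*,h_*)$ is at most the $\Ogroup(2)$-equivariant index of $\K$, which equals one, and is nonzero since $g\to0$ near the parameter boundary, so $(a_*,h_*)$ is a saddle whose ridge one may follow, tapering $\alpha$ to a constant away from $h_*$). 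One also checks that the surfaces $K_{a_*,h}$ are pairwise disjoint because $h\mapsto\sqrt{1-h^2}/\cosh(a_*h)$ is strictly decreasing, so $\{K_{a_*,h}\}_{h\in\interval{0,1}}$ genuinely foliates $\B^3$ minus the $x_3$-axis.

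For \ref{lem:catenoid-iv}, as $a\to\infty$ one has $r_a(z)\to0$ for every $\abs{z}<h$ while $r_a(\pm h)=\sqrt{1-h^2}$ stays fixed; applying the closed formula of \ref{lem:catenoid-i} on each subcylinder $\{\abs{z}\le c\}$ (with $P$ replaced by $ab\sinh(ac)$, which tends to $0$ for $c<h$) shows that the part of $K_{a,h}$ lying over $\{\abs{z}\le c\}$ has area tending to $0$, while the total area tends to $2\pi(1-h^2)$; the remaining area therefore concentrates near the planes $\{x_3=\pm h\}$, along graphical collars whose tangent planes become horizontal, so $K_{a,h}\to\B^3\cap\{\abs{x_3}=h\}$ as varifolds.
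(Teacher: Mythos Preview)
Your argument is largely sound, but it diverges from the paper in two places, one of which contains a real gap.

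\textbf{Part~(i), area bound.} Your closed formula $\hsd^2(K_{a,h})=\tfrac{2\pi}{a^2}\bigl(P\sqrt{1+P^2}+\operatorname{arcsinh}P\bigr)$ and the identity $a^2=P^2+t^2+P^2/\sinh^2 t$ are correct, and the resulting reduction to the inequality $P\sqrt{1+P^2}+\operatorname{arcsinh}P<P^2+t^2+P^2/\sinh^2 t$ is valid. However, you do not prove this inequality, and your own asymptotic remark shows that it is tight to leading order near the cylinder, so the verification is genuinely delicate. The paper avoids this entirely by a first-variation argument: since the mean curvature of $K_{a,h}$ has constant sign equal to $\operatorname{sign}\bigl(a^2(1-h^2)\operatorname{sech}^2(ah)-1\bigr)$ and $\partial_a r_a<0$, the map $a\mapsto\hsd^2(K_{a,h})$ decreases, then possibly increases, then decreases, with local maximum either at the cylinder limit $a\to0$ (area $4\pi h\sqrt{1-h^2}\le2\pi$, strict for $a>0$) or at the larger minimal value $a=a_2(h)$, where the area equals $f(a_2h)$ for $f(s)=2\pi\,(s+\sinh s\cosh s)/(\cosh^2 s+s^2)$; one checks $f'(s)\ge0$ and $\lim_{s\to\infty}f(s)=2\pi$, so $f<2\pi$. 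This bypasses your two-variable inequality.

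\textbf{Part~(iii).} Here your approach is genuinely different and in fact simpler. The paper takes $\alpha(h)=a_*h_*/h$ (so that every $K_{\alpha(h),h}$ meets $\partial\B^3$ orthogonally), then uses the sign of the mean curvature together with monotonicity of $h\mapsto r_{\alpha(h)}(z)$ to deduce that $h\mapsto\hsd^2(K_{\alpha(h),h})$ increases on $(0,h_*)$ and decreases on $(h_*,1)$; since $\alpha$ then blows up at $h=0$, the paper must truncate and smooth. Your choice $\alpha\equiv a_*$ avoids all of this: it is already smooth on $[0,1]$ with values in $(1,\infty)$, your closed formula gives $\hsd^2(K_{a_*,h})\to0$ at both endpoints because $P\to0$, and the free-boundary-minimality argument correctly shows $h_*$ is a critical point of $h\mapsto g(a_*,h)$. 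The ``one-variable verification'' you defer is short: $\partial_h g(a_*,h)$ is a positive multiple of $P'(h)$, and $P'(h)=0$ reduces to $h\sinh(a_*h)\cosh(a_*h)=a_*(1-h^2)$, whose left side minus right side is strictly increasing in $h$ (its derivative is a sum of positive terms), so the critical point is unique and hence the global maximum. What you gain is a cleaner $\alpha$; what the paper's choice buys is the orthogonality of each slice to $\partial\B^3$, which is conceptually pleasant but not needed for the lemma.

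\textbf{Parts~(ii) and~(iv).} Your treatment matches the paper's.
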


\begin{proof}
\begin{enumerate}[wide]
\item[\ref{lem:catenoid-i}]
For any $a>0$ and any $h\in\interval{0,1}$ the surface $K_{a,h}$ is $\Ogroup(2)$-equivariant and properly embedded in $\B^3$ because $r_a(h)=r_a(-h)=\sqrt{1-h^2}$ and $r_a(z)=r_a(-z)<\sqrt{1-z^2}$ for every $z\in\interval{-h,h}$.  
It remains to estimate the area of $K_{a,h}$. 
We fix $h\in\interval{0,1}$ and consider the one-parameter family $\{K_{a,h}\}_{a>0}$. 
The corresponding variation vector field vanishes on $\partial K_{a,h}$ and
for every $z\in\interval{-h,h}$ we have 
\begin{align}\notag
\frac{\partial}{\partial a}r_a(z)
&=\frac{z\sinh(az)\cosh(ah)-h\cosh(az)\sinh(ah)}{\cosh^2(a h)}\sqrt{1-h^2}
\\\label{eqn:variation}
&=\bigl(z\tanh(az)-h\tanh(ah)\bigr)r_a(z)<0.
\end{align}
Being a surface of revolution, the mean curvature of $K_{a,h}$ with respect to its outward unit normal vector is given by 
(cf.~\cite[(16)--(17)]{CarlottoFranzSchulz2022})
\begin{align}\label{eqn:mean_curvature}
H&=\frac{r_a r_a''-(r_a')^2-1}{\bigl((r_a')^2+1\bigr)^{\frac{3}{2}}r_a}
=\frac{a^2(1-h^2)\operatorname{sech}^2(ah)-1}{\bigl((r_a')^2+1\bigr)^{\frac{3}{2}}r_a}.
\end{align}
In particular, the sign of $H$ coincides with the sign of $1-F(a,h)$, where 
\begin{align}\label{eqn:F(a,h)}
F(a,h)\vcentcolon=a^{-2}\cosh^2(ah)+h^2. 
\end{align}
Hence, $K_{a,h}$ is a minimal surface if and only if $F(a,h)=1$ and recalling \eqref{eqn:variation}, the first variation formula implies 
\begin{align}\label{eqn:20240421}
\tfrac{\partial}{\partial a}\hsd^2(K_{a,h})
&<0  ~\Leftrightarrow~ F(a,h)>1,
&
\tfrac{\partial}{\partial a}\hsd^2(K_{a,h})
&>0  ~\Leftrightarrow~ F(a,h)<1. 
\end{align}
It is straightforward to check that $a\mapsto F(a,h)$ is strictly convex with $F(a,h)\to\infty$ for $a\searrow0$ and for $a\to\infty$. 
Hence, the equation $F(a,h)=1$ has at most two solutions for $a$. 
Moreover, if $h\in\interval{0,1}$ is sufficiently small, the minimum of $a\mapsto F(a,h)$ is less than $1$ and the equation $F(a,h)=1$ has exactly two solutions for $a$ which we denote by $a_1(h)$ and $a_2(h)$ in increasing order. 
In this case, \eqref{eqn:20240421} implies that $a\mapsto\hsd^2(K_{a,h})$ decreases for $0<a<a_1(h)$ and for $a>a_2(h)$ while it increases for $a_1(h)<a<a_2(h)$. 
For $a\searrow0$ the surface $K_{a,h}$ converges to a round cylinder with area 
$4\pi h\sqrt{1-h^2}\leq2\pi$.  
Consequently, $\hsd^2(K_{a,h})<2\pi$ for all $0<a<a_1(h)$ if the solution $a_1(h)$ exists, and for all $a>0$ otherwise. 
It remains to prove $\hsd^2(K_{a,h})<2\pi$ for $a=a_2(h)$ to conclude. 
Being a surface of revolution, the area of $K_{a,h}$ is given by 
\begin{align}\label{eqn:20240604}
\hsd^2(K_{a,h})&=2\pi\int^{h}_{-h}r_a\sqrt{(r_a')^2+1}\,dz. 
\end{align} 
If $F(a,h)=1$ then $r_{a}(z)=\frac{1}{a}\cosh(a z)$ and 
$r_a(z)\sqrt{(r_a'(z))^2+1}=\frac{1}{a}\cosh^2(a z)$.
A primitive for $z\mapsto\frac{1}{a}\cosh^2(a z)$ is given by $z\mapsto\frac{1}{2}a^{-2}\bigl(az +\sinh(az)\cosh(az)\bigr)$. 
Hence, 
\begin{align}\notag
F(a,h)=1 ~\Rightarrow~
\hsd^2(K_{a,h})&=\frac{2\pi}{a^2} \Bigl(ah+\sinh(ah)\cosh(ah)\Bigr)
\\\label{eqn:f(ah)}
&=2\pi\frac{ah+\sinh(ah)\cosh(ah)}{\cosh^2(ah)+(ah)^2}=\vcentcolon f(ah). 
\end{align}  
The derivative of the map $s\mapsto f(s)$ defined in \eqref{eqn:f(ah)} simplifies to 
\begin{align}\label{eqn:f'(s)}
f'(s)&=4\pi\biggl(\frac{\cosh(s)-s\sinh(s)}{\cosh^2(s)+s^2}\biggr)^2\geq0. 
\end{align}
Hence, $f$ is nondecreasing and for any $s\geq 1$ it is easy to verify $f(s)<2\pi$. 
This proves \ref{lem:catenoid-i}. 

\item[\ref{lem:catenoid-ii}]
Fixing $h\in\interval{0,1}$ it is a priori unclear whether the equation $F(a,h)=1$ has two, one or no solution for $a$. 
Instead we fix $a>1$ and recall from \eqref{eqn:F(a,h)} that the map $h\mapsto F(a,h)$ is increasing with $F(a,0)<1<F(a,1)$. 
Hence, the equation $F(a,h)=1$ has exactly one solution $h=\varphi(a)\in\interval{0,1}$ for any given $a>1$. 
We compute $\frac{\partial F}{\partial h}=2a^{-1}\sinh(ah)\cosh(ah)+2h$ 
and $\frac{\partial F}{\partial a}=2a^{-3}\cosh(ah)\bigl(ah\sinh(ah)-\cosh(ah)\bigr)$.
Since $\frac{\partial F}{\partial h}\neq0$ for $h>0$, the implicit function theorem implies that 
$\varphi\colon\interval{1,\infty}\to\interval{0,1}$ is differentiable and that $a_*$ is a critical point of $\varphi$ with value $h_*=\varphi(a_*)$ if 
$\frac{\partial F}{\partial a}(a_*,h_*)=0$, or equivalently if 
\begin{align}\label{eqn:cat2}
a_*h_*\tanh(a_*h_*)=1.
\end{align}
Equation \eqref{eqn:cat2} determines $\abs{a_*h_*}$ uniquely 
because $x\mapsto x\tanh(x)$ is increasing from $0$ to $\infty$ for $x\geq 0$. 
The equation $F(a_*,h_*)=1$ then implies that $a_*^2=\cosh^2(a_*h_*)+(a_*h_*)^
2$, thus $a_*$ and $h_*$ are also unique.\footnote{Solving the equations numerically yields $a_*h_*
\approx1.19968$ respectively $a_*\approx2.17162$ and $h_*\approx0.55243$.}
Remarkably, the equation $ah\tanh(ah)=1$ is equivalent to the condition that the vectors $\bigl(r_{a}(h),h\bigr)$ and $\bigl(r_{a}'(h),1\bigr)$ are parallel. 
This means that the minimal annulus $K_{a_*,h_*}$ meets $\partial\B^3$ orthogonally and therefore coincides with the critical catenoid $\K$. 

\begin{figure}\centering
\providecommand{\Kah}[3][]{
\draw[domain={-#3}:{#3},smooth,variable=\z,samples=40,#1]
plot({ sqrt(1-#3*#3)*cosh(#2*\z)/cosh(#2*#3)},{\z});
\draw[domain={-#3}:{#3},smooth,variable=\z,samples=40,#1]
plot({-sqrt(1-#3*#3)*cosh(#2*\z)/cosh(#2*#3)},{\z});
}
\pgfmathsetmacro{\xmax}{1.25}
\pgfmathsetmacro{\ymax}{1.15}
\pgfmathsetmacro{\astar}{2.1716}
\pgfmathsetmacro{\hstar}{0.5524}
\pgfmathsetmacro{\hzero}{0.1}
\begin{tikzpicture}[scale=\unitscale,semithick,line join=round,baseline={(0,0)}]
\pgfmathsetmacro{\wdelta}{10}
\pgfmathsetmacro{\wA}{asin(\hstar)-\wdelta}
\pgfmathsetmacro{\wB}{asin(\hstar)-2*\wdelta}
\pgfmathsetmacro{\wC}{asin(\hzero)}
\foreach \winkel in {\wA,\wB,...,\wC}{
\pgfmathsetmacro{\hpar}{sin(\winkel)}
\Kah{\astar*\hstar/\hpar}{\hpar}
}
\pgfmathsetmacro{\wA}{asin(\hstar)+\wdelta}
\pgfmathsetmacro{\wB}{asin(\hstar)+2*\wdelta}
\foreach \winkel in {\wA,\wB,...,90}{
\pgfmathsetmacro{\hpar}{sin(\winkel)}
\Kah{\astar*\hstar/\hpar}{\hpar}
}
\Kah{\astar*\hstar/\hzero}{\hzero}
\Kah{\astar*\hstar/\hzero}{\hzero*1/2}
\Kah[very thick]{\astar}{\hstar}
\draw(0,0)circle(1);
\draw[->](0,0)--(\xmax,0)node[pos=-1,inner sep=0](for_centering){}node[below left={1ex and 0},inner sep=0]{$r_a$};
\draw[->](0,0)--(0,\ymax)node[below right,inner sep=0]{~$z$};
\draw plot[plus](0,0);
\draw[dotted,line cap=round] (0,\hstar)--++(-1.075,0)node[left]{$h_*$};
\draw plot[hdash](0,\hstar);
\draw[dotted,line cap=round] (0,\hzero)--++(-1.075,0)node[left]{$h_0$};
\draw plot[hdash](0,\hzero);
\end{tikzpicture}
\hfill
\pgfmathsetmacro{\hpar}{0.4}
\begin{tikzpicture}[scale=\unitscale,semithick,line cap=round,line join=round,baseline={(0,0)}]
\foreach \apar in {1.01,...,12} 
{
\draw[domain=-\hpar:\hpar,smooth,variable=\z,samples=20]
plot({ sqrt(1-\hpar*\hpar)*cosh(\apar*\z)/cosh(\apar*\hpar)},{\z});
\draw[domain=-\hpar:\hpar,smooth,variable=\z,samples=20]
plot({-sqrt(1-\hpar*\hpar)*cosh(\apar*\z)/cosh(\apar*\hpar)},{\z});
}
\draw(0,0)circle(1);
\draw[->](0,0)--(\xmax,0)node[pos=-1,inner sep=0](for_centering){}
node[below left={1ex and 0},inner sep=0]{$r_a$};
\draw[->](0,0)--(0,\ymax)node[below right,inner sep=0]{~$z$};
\draw plot[plus](0,0);
\draw[dotted] (0,\hpar)--++({sqrt(1-\hpar*\hpar)},0);
\draw plot[hdash](0,\hpar)node[left]{$h$};
\end{tikzpicture}
\caption{Left image: Cross section through the optimal sweepout $\{K_{\alpha(h),h}\}_{h\in\interval{0,1}}$ for the critical catenoid constructed in Lemma~\ref{lem:catenoid}~\ref{lem:catenoid-iii}. \\
Right image: Cross section through the annuli $K_{a,h}$ for fixed $h$ and increasing $a>1$.  
}%
\label{fig:optimal_sweepout}. 
\end{figure}

\item[\ref{lem:catenoid-iii}]
Given $h\in\intervaL{0,1}$ let $\alpha(h)=a_*h_*/h$. 
As explained in the proof of~\ref{lem:catenoid-ii}, the (not necessarily minimal) annulus $K_{\alpha(h),h}$ meets $\partial\B^3$ orthogonally for every $h\in\interval{0,1}$ because $\alpha(h)h\tanh(\alpha(h)h)=a_*h_*\tanh(a_*h_*)=1$. 
We recall from \eqref{eqn:mean_curvature} and \eqref{eqn:F(a,h)} that the sign of the mean curvature of $K_{\alpha(h),h}$ coincides with the sign of $1-F(\alpha(h),h)=1-(h/h_*)^2$. 
Since for every $z\in[-h,h]$ 
\begin{align*}
h\mapsto r_{\alpha(h)}(z)
&=\frac{\cosh\bigl(a_*h_*z/h\bigr)}{\cosh(a_*h_*)}\sqrt{1-h^2} 
\end{align*}
is decreasing in $h\in\interval{0,1}$, the first variation formula implies that $h\mapsto\hsd^2(K_{\alpha(h),h})$ is strictly increasing for $h\in\interval{0,h_*}$ and strictly decreasing for $h\in\interval{h_*,1}$. 
In particular, 
\begin{align}\label{eqn:20240523}
\hsd^2(K_{\alpha(h),h})\leq\hsd^2(K_{a_*,h_*})=\hsd^2(\K)
\end{align}
for every $h\in\interval{0,1}$ with equality if and only if $h=h_*$. 
To prove claim \ref{lem:catenoid-iii}, we choose some $h_0\in\interval{0,h_*}$, replace $\alpha(h)$ with the constant value $a_*h_*/h_0$ for each $h\in\Interval{0,h_0}$ and regularise the resulting Lipschitz function slightly near $h=h_0$ to obtain a smooth function $\alpha\colon[0,1]\to\interval{1,\infty}$. 
Since $\alpha$ is bounded, it is evident from \eqref{eqn:20240604} that $\hsd^2(K_{\alpha(h),h})\to0$ as $h\searrow0$ and if $h_0>0$ is sufficiently small, the area estimate \eqref{eqn:20240523} is preserved. 
For $h\nearrow1$ the profile function $r_{\alpha(h)}$ converges to zero uniformly and $\hsd^2(K_{\alpha(h),h})\to0$ follows again from \eqref{eqn:20240604}. 

\item[\ref{lem:catenoid-iv}]
Let $h\in\interval{0,1}$ be fixed. 
Then $K_{a,h}$ converges in the sense of varifolds to $\B^3\cap\{\abs{x_3}=h\}$ as $a\to\infty$ because 
$\displaystyle\lim_{a\to\infty}r_a(z)=0$ for every $z\in\interval{-h,h}$ 
(see Figure \ref{fig:optimal_sweepout}, right image). \qedhere
\end{enumerate}
\end{proof}

\begin{proposition}\label{prop:sweepout}
There exists a two-parameter $\pri_2$-sweepout $\{\Sigma_{s,t}\}_{(s,t)\in[0,1]^2}$ of $\B^3$ with the following properties:
\begin{enumerate}[label={\normalfont(\roman*)}]
\item\label{prop:sweepout-topo} $\Sigma_{s,t}$ has genus one and two boundary components for all $(s,t)\in\interval{0,1}^2$.
\item\label{prop:sweepout-area} $\hsd^2(\Sigma_{s,0})=\hsd^2(\Sigma_{s,1})=0$ for all $s\in[0,1]$ and $\hsd^2(\Sigma_{s,t}) < 2\pi$ for all $(s,t)\in[0,1]^2$.
\item\label{prop:sweepout-topo0} $\Sigma_{0,t}$ has genus zero and two boundary components, intersects the $x_1$-axis and the $x_2$-axis orthogonally, and is disjoint from the $x_3$-axis for all $t\in\interval{0,1}$.
\item\label{prop:sweepout-topo1} $\Sigma_{1,t}$ has genus zero and two boundary components, intersects the $x_1$-axis and the $x_3$-axis orthogonally, and is disjoint from the $x_2$-axis for all $t\in\interval{0,1}$.
\end{enumerate}
\end{proposition}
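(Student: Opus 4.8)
The plan is to construct the family $\{\Sigma_{s,t}\}$ by hand, with the catenoid family of Lemma~\ref{lem:catenoid} as its backbone. The first point is that the optimal sweepout of Lemma~\ref{lem:catenoid}~\ref{lem:catenoid-iii} in fact sweeps out the whole ball, not merely the critical catenoid: for fixed $z_0$ the value $r_{\alpha(h)}(z_0)$ ranges over all of $\interval{0,\sqrt{1-z_0^2}}$ as $h$ varies over $\interval{\abs{z_0},1}$, so $\bigcup_{h\in\interval{0,1}}K_{\alpha(h),h}=\B^3\setminus\{x_3\text{-axis}\}$, while the family degenerates to zero area as $h\searrow0$ and as $h\nearrow1$. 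After reparametrizing $h=h(t)$ with $h(0)=0$ and $h(1)=1$, the face $\Sigma_{0,t}:=K_{\alpha(h(t)),h(t)}$ is, by parts~\ref{lem:catenoid-i} and~\ref{lem:catenoid-iii} of Lemma~\ref{lem:catenoid}, an $\Ogroup(2)$-equivariant (hence $\pri_2$-equivariant) properly embedded genus-zero annulus with two boundary components, of area $\leq\hsd^2(\K)<2\pi$, meeting the $x_1$- and $x_2$-axes orthogonally (at the points $(\pm r_{\alpha(h)}(0),0,0)$ and $(0,\pm r_{\alpha(h)}(0),0)$, where the surface normal is axial) and avoiding the $x_3$-axis; this is property~\ref{prop:sweepout-topo0}. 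Conjugating by the order-four rotation $(x_1,x_2,x_3)\mapsto(x_1,x_3,-x_2)$, which normalizes $\pri_2$ and sends the $x_3$-axis to the $x_2$-axis, defines the face $\Sigma_{1,t}$ and produces property~\ref{prop:sweepout-topo1}.

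For the interior of the square I keep a (morphing) catenoid-type annulus and attach two thin $\pri_2$-equivariant ribbons. At each $(s,t)$ I use an annulus whose axis is the $x_3$-axis when $s=0$, is the $x_2$-axis when $s=1$, and varies continuously in between, and I glue on two bands lying in a collar of $\partial\B^3$, each joining the two boundary circles of the annulus and placed symmetrically with respect to the three coordinate reflections, with band width given by a smooth function $w(s,t)$ that is positive on $\interval{0,1}^2$ and vanishes on $\partial(\interval{0,1}^2)$. The Euler-characteristic and boundary-component count for two-band surgery on an annulus shows that the result has genus one and two boundary components; letting $w$ vanish on $\{s=0\}\cup\{s=1\}$ reduces the surface to the annuli of the previous step, and letting $w$ together with the annulus degenerate on $\{t=0\}\cup\{t=1\}$ makes the area vanish there. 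As $s$ runs over $[0,1]$ the annulus must rotate from the $x_3$-axis to the $x_2$-axis; since no $\pri_2$-equivariant round catenoid interpolates these two positions, for $s$ in an interior range the bands are kept wide enough to perform this interpolation through $\pri_2$-equivariant genus-one surfaces -- the ``rotate the critical catenoid by $\pi/2$ while interchanging its sides'' move of \cite{Ketover2022}, carried out equivariantly and with free boundary -- and the schedules for $w$ and for the rotation are chosen so that the total area stays below $2\pi$ everywhere. This is possible because $\hsd^2(K_{a,h})<2\pi$ uniformly by Lemma~\ref{lem:catenoid}~\ref{lem:catenoid-i}, the relevant areas vary continuously along any path realizing the rotation and the morphing, and the band contribution can be made arbitrarily small.

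Granting this family, the requirements of Definition~\ref{defn:G-sweepout} and properties~\ref{prop:sweepout-topo}--\ref{prop:sweepout-topo1} follow directly. Equivariance, continuity in the varifold sense, and smoothness on the interior of each face hold because every ingredient is a smooth equivariant isotopy or a smooth band-width function; genus one and two boundary components on $\interval{0,1}^2$ come from the two-band surgery with nondegenerate bands; on the four faces the degenerations built above give properties~\ref{prop:sweepout-topo0} and~\ref{prop:sweepout-topo1} and the vanishing of area asserted in~\ref{prop:sweepout-area}; and the strict bound $\hsd^2(\Sigma_{s,t})<2\pi$ throughout is exactly what the schedules of the previous paragraph secure.

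I expect the interpolation in the interior range of $s$ to be the main obstacle: one needs a genuinely $\pri_2$-equivariant homotopy of genus-one sweepouts of $\B^3$ joining the two axial catenoid-with-ribbons families which (a) never drops the genus, so that the ribbons and the prismatic symmetry must take over the role played in the nonequivariant setting by the rotation of round catenoids, and (b) keeps every leaf a properly embedded surface of area strictly below $2\pi$, even though each $t$-family must still sweep a full neighborhood of $\partial\B^3$. As in the catenoid sweepout of Lemma~\ref{lem:catenoid}, (b) has to be arranged so that $\partial\B^3$ is swept out by the motion of the free boundary curves rather than by the surfaces accumulating area near $\partial\B^3$. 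Once this equivariant, area-controlled homotopy of sweepouts is in place, the remaining verifications are routine.
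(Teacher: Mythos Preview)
Your outline captures the catenoid backbone and the ribbon-attachment idea, but the mechanism you propose for interpolating between the $x_3$-axis annulus and the $x_2$-axis annulus is the wrong one, and the obstacle you flag at the end is real and not overcome by your sketch. You try to \emph{rotate} the axis of the underlying annulus from $x_3$ to $x_2$ while relying on wide bands to maintain $\pri_2$-equivariance in the middle range of $s$; but you never say what the underlying surface actually is during that range (a non-equivariant annulus with bands attached is not automatically equivariant), and your area control (``areas vary continuously along any path'') gives no reason for the supremum along such a path to stay below $2\pi$.

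The paper's construction avoids rotation entirely. The catenoid axis stays fixed at $x_3$ for all $s$, but the catenoid parameter is pushed to infinity: one chooses $\beta(s,t)$ with $\beta(0,t)=\alpha(t)$ and $\beta(s,t)\to\infty$ as $s\to1$, and uses $K_{\beta(s,t),t}$ as the annular core. By Lemma~\ref{lem:catenoid}\,\ref{lem:catenoid-iv} this core degenerates, as $s\to1$, to the pair of discs $\B^3\cap\{\abs{x_3}=t\}$. The ribbons are placed once and for all along the meridians through $(\pm1,0,0)$, with width $\varepsilon(s,t)$ that vanishes as $s\to0$ (recovering the bare catenoid for~\ref{prop:sweepout-topo0}) but stays positive at $s=1$. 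Thus $\Sigma_{1,t}$ is not a rotated catenoid at all: it is two horizontal discs joined by two ribbons along the $x_1$-meridians, which is a $\pri_2$-equivariant annulus meeting the $x_1$- and $x_3$-axes and avoiding the $x_2$-axis, as required in~\ref{prop:sweepout-topo1}. The ``rotation'' of the annulus is achieved by a neck-pinch, not a rigid motion, and every slice is manifestly $\pri_2$-equivariant.

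A second point you do not address: since $K_{\beta(s,t),t}$ accumulates area approaching $2\pi(1-t^2)$ as $s\to1$, the family as constructed only covers $t\in\Interval{t_0,1}$ with a uniform gap below $2\pi$. For $t\in\interval{0,t_0}$ the paper completes the sweepout separately, invoking the catenoid estimate of \cite{KetoverMarquesNeves2020} (as in \cite{FranzSchulz2023}) to widen the ribbons near the double-disc stage without exceeding $2\pi$. Your proposal omits this step.
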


\begin{figure}%
\providecommand{\vdist}{\vspace*{1.75ex}}
\pgfmathsetmacro{\sweepoutscale}{\picscale}%
\providecommand{\backsphere}{\path[tdplot_screen_coords,inner color=white,outer color=cyan!5!black!20](0,0)circle(1);\tdplotdrawarc[black!30,thin]{(0,0,0)}{1}{0}{360}{}{}}%
\providecommand{\frontsphere}{\tdplotdrawarc[black,thin]{(0,0,0)}{1}{\phiO-180}{\phiO}{}{}\draw[tdplot_screen_coords,black,thin] (0,0) circle (1);
\pgfresetboundingbox
\useasboundingbox[tdplot_screen_coords](0,0)circle(1);
}
\pgfmathsetmacro{\thetaO}{72}
\pgfmathsetmacro{\phiO}{180-45/2} 
\tdplotsetmaincoords{\thetaO}{\phiO}
~\quad\begin{tikzpicture}[tdplot_main_coords,line cap=round,line join=round,scale={\sweepoutscale*\textwidth/2cm},baseline={(0,0,0)}]
\backsphere  
\FBMS[\sweepoutscale]{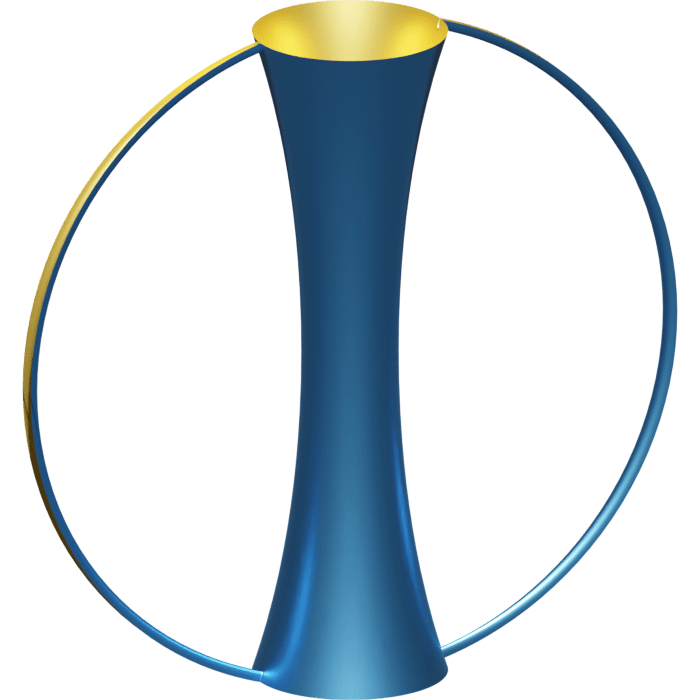}
\frontsphere
\end{tikzpicture}%
\hfill
\begin{tikzpicture}[tdplot_main_coords,line cap=round,line join=round,scale={\sweepoutscale*\textwidth/2cm},baseline={(0,0,0)}]
\backsphere  
\FBMS[\sweepoutscale]{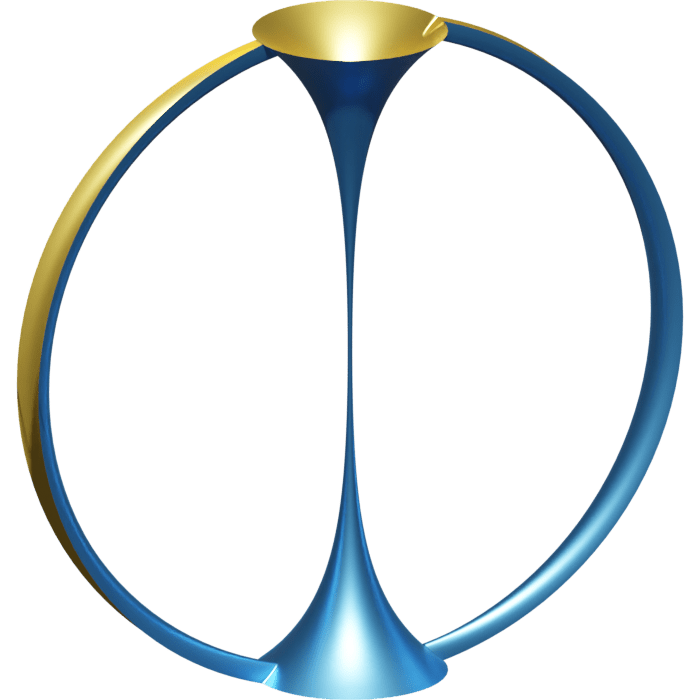} 
\frontsphere
\end{tikzpicture}\quad~%

\vdist

~\quad\begin{tikzpicture}[tdplot_main_coords,line cap=round,line join=round,scale={\sweepoutscale*\textwidth/2cm},baseline={(0,0,0)}]
\backsphere  
\FBMS[\sweepoutscale]{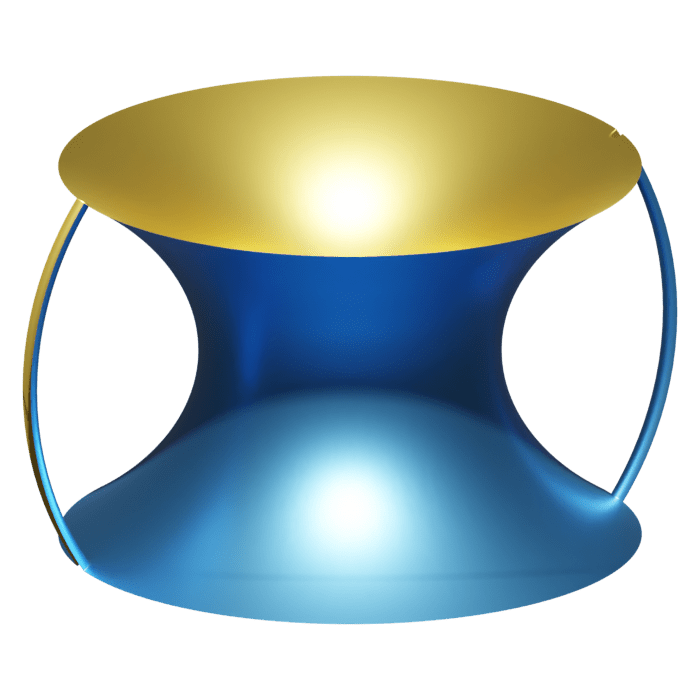}
\draw[densely dotted]
(0.43,0,0)--(0.965,0,0)
(0,0.43,0)--(0,1,0)
(0,0,0.285)--(0,0,1)
;
\draw[->](0.99,0,0)--(1.2,0,0)node[below]{$x_1$};
\draw[->](0,1,0)--(0,1.2,0)node[below]{$x_2$};
\draw[->](0,0,1)--(0,0,1.01)node[below right,inner sep=0]{~$x_3$};
\frontsphere
\end{tikzpicture}%
\hfill
\begin{tikzpicture}[tdplot_main_coords,line cap=round,line join=round,scale={\sweepoutscale*\textwidth/2cm},baseline={(0,0,0)}]
\backsphere  
\FBMS[\sweepoutscale]{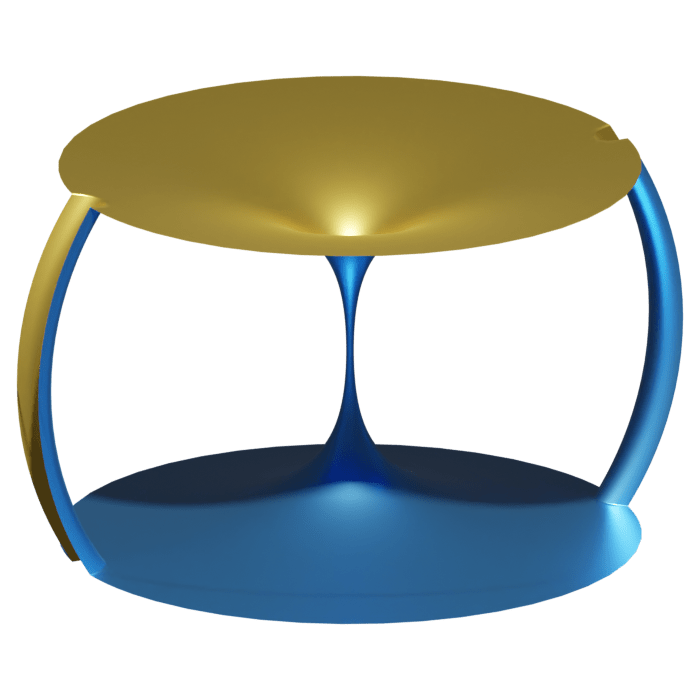}
\frontsphere
\end{tikzpicture}\quad~%

\vdist

~\quad\begin{tikzpicture}[tdplot_main_coords,line cap=round,line join=round,scale={\sweepoutscale*\textwidth/2cm},baseline={(0,0,0)}]
\backsphere  
\FBMS[\sweepoutscale]{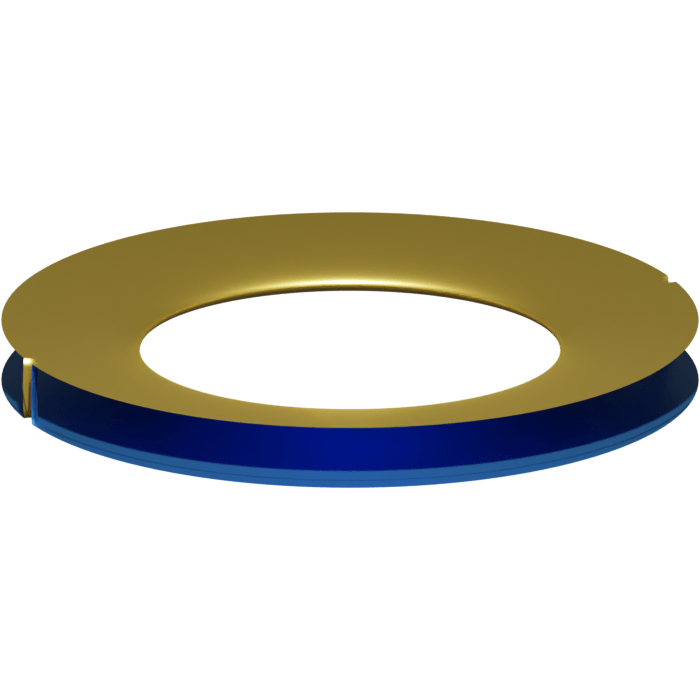}
\frontsphere
\end{tikzpicture}%
\hfill 
\begin{tikzpicture}[tdplot_main_coords,line cap=round,line join=round,scale={\sweepoutscale*\textwidth/2cm},baseline={(0,0,0)}]
\backsphere  
\FBMS[\sweepoutscale]{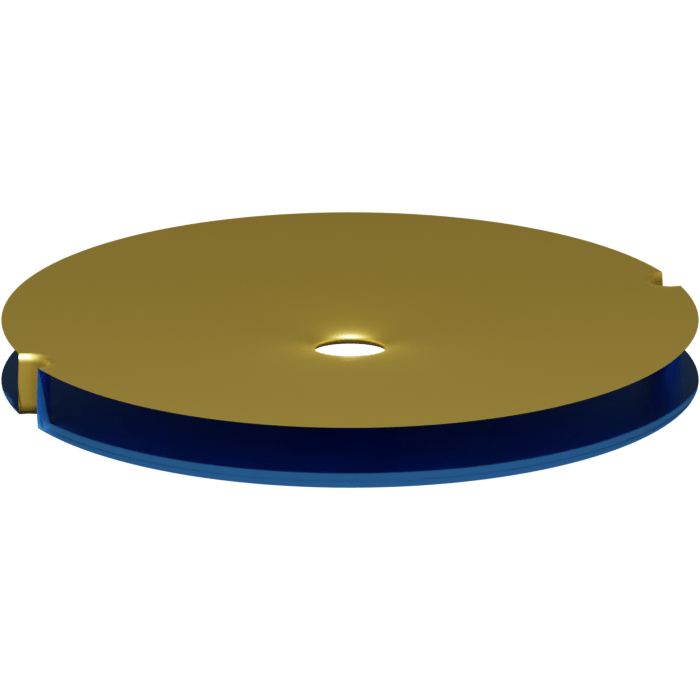}
\frontsphere
\end{tikzpicture}\quad~%
\caption{The surfaces $\Sigma_{s,t}$ defined in \eqref{eqn:Sigma_s,t} for $s$ close to $0$ (left column) respectively $s$ close to $1$ (right column) and three choices of $t\in\Interval{t_0,1}$. 
($s$ increases horizontally, $t$ vertically.) \\
For $s\to0$ the lateral ribbons disappear and an annulus around the $x_3$-axis remains.
For $s\to1$ the central neck disappears and an annulus around the $x_2$-axis remains.
} 
\label{fig:sweepout} 
\end{figure}

\begin{proof}
Given any $a>0$ and $h\in\interval{0,1}$, let $K_{a,h}\subset\R^3$ be as in Lemma~\ref{lem:catenoid} and let 
$\alpha\colon[0,1]\to\interval{1,\infty}$ be as in statement \ref{lem:catenoid-iii} therein. 
For $t\in\interval{0,1}$ we set $\Sigma_{0,t}\vcentcolon=K_{\alpha(t),t}$. 
Then $\Sigma_{0,t}$ has genus zero and two boundary components, intersects the $x_1$-axis and the $x_2$-axis orthogonally, and is disjoint from the $x_3$-axis. 
Let $\beta\colon\Interval{0,1}\times\interval{0,1}\to\interval{1,\infty}$ be a smooth function such that $\beta(0,t)=\alpha(t)$ for every $t\in\interval{0,1}$ and such that $s\mapsto\beta(s,t)$ is increasing for every $t\in\interval{0,1}$ and diverges to $\infty$ as $s\to1$.  
By Lemma \ref{lem:catenoid} 
the area of the surface $K_{\beta(s,t),t}$ satisfies 
\begin{itemize}[nosep]
\item $\hsd^2(K_{\beta(s,t),t})<2\pi$ for every $s,t\in\interval{0,1}$, 
\item $\hsd^2(K_{\beta(s,t),t})\to2\pi(1-t^2)$ as $s\nearrow1$ and 
$\hsd^2(K_{\beta(s,t),t})\to0$ as $t\nearrow1$,
\item $\hsd^2(K_{\beta(0,t),t})\leq\hsd^2(\K)<2\pi$. 
\end{itemize}
Hence, given $t_0>0$ to be chosen there exists $\delta(t_0)>0$ such that for every $(s,t)\in\interval{0,1}\times\Interval{t_0,1}$ 
\begin{align}\label{eqn:area(Kst)}
\hsd^2(K_{\beta(s,t),t})<2\pi-\delta(t_0).
\end{align}
The idea is to attach lateral ribbons to the surface $K_{\beta(s,t),t}$ using a similar construction as in \cite[§\,5]{FranzSchulz2023} (for $n=2$ and $t\in[t_0,1]$). 
Let $B_\varepsilon(p_\pm)\subset\R^3$ denote the ball of radius $\varepsilon>0$ around the point 
$p_\pm=(\pm1,0,0)$. 
Given $\tau\in\interval{-1,1}$ we consider the sets 
\begin{align*}
Q_{\varepsilon}&\vcentcolon=\bigl(B_\varepsilon(p_+)\cup B_\varepsilon(p_-)\bigr)\cap\B^3\cap\{x_3=0\}, & 
\Omega_{\varepsilon}&\vcentcolon=\bigcup_{\tau\in\interval{-1,1}}\Bigl(\bigl(\sqrt{1-\tau^2}Q_{\varepsilon}\bigr)+(0,0,\tau)\Bigr).
\end{align*}
Then $\Omega_{\varepsilon}$ is contained in an $\varepsilon$-neighbourhood of the meridian through $p_\pm$. 
With $0<\varepsilon_0\leq t_0\ll1$ to be chosen, let 
$\varepsilon\colon\intervaL{0,1}\times\Interval{t_0,1}\to\intervaL{0,\varepsilon_0}$ be a smooth function such that $\varepsilon(s,t)\to0$ whenever $s\to0$ or $t\to1$. 
Let $\Upsilon_{s,t}$ be the connected component of $\B^3\setminus K_{\beta(s,t),t}$ containing the poles. 
With $0<t_0\ll 1$ to be chosen, we define 
\begin{align}\label{eqn:Sigma_s,t}
\Sigma_{s,t}&\vcentcolon=\overline{\partial\bigl(\Upsilon_{s,t}\cup\Omega_{\varepsilon(s,t)}\bigr)\setminus\partial\B^3}
\end{align}
for every $(s,t)\in\interval{0,1}\times\Interval{t_0,1}$. 
Here, $\partial X$ denotes the topological boundary of a set $X\subset\R^3$ and $\overline{X}$ its closure. 
If $\varepsilon_0>0$ is sufficiently small $\Sigma_{s,t}$ resembles the union of the surface $K_{\beta(s,t),t}$ with two longitudinal ribbons intersecting the $x_1$-axis orthogonally (see Figure~\ref{fig:sweepout}). 
In particular, $\Sigma_{s,t}$ has the desired topology as stated in \ref{prop:sweepout-topo}. 
The total area contribution of the two ribbons is bounded from above by $c\varepsilon_0$, 
where $c>0$ is an explicit numerical constant (see \cite[Lemma~2.4]{CarlottoFranzSchulz2022} for a similar estimate). 
Choosing $\varepsilon_0\leq \delta(t_0)/(2c)$ and recalling \eqref{eqn:area(Kst)} we obtain  
\begin{align}
\hsd^2(\Sigma_{s,t})\leq 2\pi -\tfrac{1}{2}\delta(t_0)
\end{align}
for all $(s,t)\in\interval{0,1}\times\Interval{t_0,1}$. 
To complete the sweepout in the range $t\in\interval{0,t_0}$ we fix 
$s\in\interval{0,1}$ and consider the following dichotomy. 

If $\hsd^2(\Sigma_{s,t_0})\leq\pi$ (e.\,g.~for small $s$ as in Figure~\ref{fig:sweepout}, bottom left image) it is straightforward 
to $\pri_2$-equivariantly deform the surface $\Sigma_{s,t}$ as $t$ decreases from $t_0$ to $0$ such that its area converges to $0$ as $t\to0$ while staying strictly below $2\pi$ and such that the deformation depends smoothly on~$s$.

If however $\hsd^2(\Sigma_{s,t_0})>\pi$ (e.\,g.~for $s$ near $1$ as in Figure~\ref{fig:sweepout}, bottom right image) the surface $\Sigma_{s,t_0}$ is close to the ``double-disc'' $\B^3\cap\{\abs{x_3}=t_0\}$  near the equator by Lemma~\ref{lem:catenoid}. 
In this case the catenoid estimate (cf.~\cite{KetoverMarquesNeves2020}*{Proposition~2.1 and Theorem~2.4}) can be used to increase the (horizontal) diameter of the ribbons without violating the strict $2\pi$ upper area bound as detailed in the proof of \cite[Theorem~5.1]{FranzSchulz2023}, provided that $t_0>0$ is chosen sufficiently small. 
After this procedure, it is again straightforward to proceed as described in the previous paragraph. 
This completes the proof of claim \ref{prop:sweepout-area}.

For claim \ref{prop:sweepout-topo1} we recall Lemma~\ref{lem:catenoid}~\ref{lem:catenoid-iv} which implies that the surface $K_{\beta(s,t),t}$ converges to the union 
$\B^3\cap\{\abs{x_3}=t\}$ of two discs as $s\to1$. 
Since the ``ribbon radius'' $\varepsilon(s,t)$ is defined to be positive for $s=1$ the surface $\Sigma_{s,t}$ defined in \eqref{eqn:Sigma_s,t} converges as $s\to1$ in the sense of varifolds to a limit with the desired properties. 
\end{proof}

\section{Width estimates and topological control}
\label{sec:topology}

In this final section, we first show width estimates related to the sweepout constructed in Proposition~\ref{prop:sweepout} and then prove Theorem~\ref{thm:ExistenceGenusOneCatenoid}.

\begin{lemma} 
\label{lem:width-1}
Given $\{\Sigma_{s,t}\}_{(s,t)\in[0,1]^2}$ as in Proposition~\ref{prop:sweepout} 
and any curve $\gamma\colon[0,1]\to[0,1]^2$ with $\gamma(0)\in[0,1]\times\{0\}$ and $\gamma(1)\in[0,1]\times\{1\}$, let $\Pi_\gamma$ be the $\pri_2$-saturation of the one-parameter sweepout $\{\Sigma_{\gamma(r)}\}_{r\in[0,1]}$ of $\B^3$. 
Then its width satisfies 
\[\pi<W_{\Pi_\gamma}<2\pi.\]
\end{lemma}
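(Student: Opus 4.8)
The plan is to prove the two bounds by rather different arguments: the upper one is a direct consequence of the construction in Proposition~\ref{prop:sweepout}, while the lower one combines the equivariant min-max theorem with a rigidity/parity observation that is special to groups generated by reflections.

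\emph{Upper bound.} Since $\{\Sigma_{\gamma(r)}\}_{r\in[0,1]}\in\Pi_\gamma$, one has $W_{\Pi_\gamma}\le\sup_{r\in[0,1]}\hsd^2(\Sigma_{\gamma(r)})\le\sup_{(s,t)\in[0,1]^2}\hsd^2(\Sigma_{s,t})$, and by Proposition~\ref{prop:sweepout}~\ref{prop:sweepout-area} — indeed by the uniform estimate $\hsd^2(\Sigma_{s,t})\le 2\pi-c$ (for some $c>0$) that its proof provides on all of $[0,1]^2$ (for $t\ge t_0$ one has $\hsd^2(\Sigma_{s,t})\le 2\pi-\tfrac12\delta(t_0)$, while for $t\le t_0$ the area is at most $\pi$ when $s$ is small and is kept below $2\pi$ by the catenoid estimate when $s$ is large) — this supremum is strictly less than $2\pi$.

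\emph{Lower bound $W_{\Pi_\gamma}\ge\pi$.} First I would note that $W_{\Pi_\gamma}>0$: since $\gamma$ joins the faces $[0,1]\times\{0\}$ and $[0,1]\times\{1\}$ and, for each fixed $s$, the family $\{\Sigma_{s,t}\}_{t\in[0,1]}$ is a nontrivial sweepout of $\B^3$ (evident from the construction in Proposition~\ref{prop:sweepout}), the volume enclosed by $\Sigma_{\gamma(r)}$ on the side which is empty at $r=0$ depends continuously on $r$ and increases from $0$ to $\hsd^3(\B^3)$; this feature is preserved by the $\pri_2$-equivariant ambient diffeomorphisms defining $\Pi_\gamma$, so every sweepout in $\Pi_\gamma$ contains a slice bounding exactly half the volume of $\B^3$, whose area is bounded below by the positive relative isoperimetric constant of $\B^3$. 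Now Theorem~\ref{thm:EquivMinMax} applies and, together with Remark~\ref{rem:B3}, produces a connected, embedded free boundary minimal surface $\Gamma_1\subset\B^3$ and an integer $m_1\ge 1$ with $W_{\Pi_\gamma}=m_1\hsd^2(\Gamma_1)$ and $\hsd^2(\Gamma_1)\ge\pi$; hence $W_{\Pi_\gamma}\ge\pi$.

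\emph{Strict lower bound.} Suppose towards a contradiction that $W_{\Pi_\gamma}=\pi$. Then $m_1\hsd^2(\Gamma_1)=\pi$ together with $\hsd^2(\Gamma_1)\ge\pi$ forces $m_1=1$ and $\hsd^2(\Gamma_1)=\pi$, so by the equality case of \cite[Theorem~5.4]{FraserSchoen2011} the surface $\Gamma_1$ is a flat equatorial disc, and since the support of $\Gamma$ is $\pri_2$-equivariant it must be one of the three coordinate discs, say $D\vcentcolon=\B^3\cap\{x_3=0\}$. Let $\{\Sigma^j\}_{j\in\N}$ be a min-max sequence converging as varifolds to $D$ with multiplicity one; each $\Sigma^j$ is $\pri_2$-equivariant and diffeomorphic to a surface of the sweepout, so it is not contained in $\{x_3=0\}$ and satisfies $\hsd^2(\Sigma^j\cap\{x_3=0\})=0$ (both properties hold for the sweepout surfaces by construction and are preserved by $\pri_2$-equivariant diffeomorphisms). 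Pick $p\in D$ lying outside the singular lines of $\pri_2$ and outside the finite set of points where the convergence fails to be smooth. Near $p$ the convergence is smooth with multiplicity one, so for large $j$ the surface $\Sigma^j$ is a single graph $\{x_3=g^j\}$ over a neighbourhood of $p$ in $D$; but the reflection $\refl\in\pri_2$ across $\{x_3=0\}$ fixes $p$, preserves small balls about $p$, and maps this graph to $\{x_3=-g^j\}$, so invariance of $\Sigma^j$ forces $g^j\equiv 0$ near $p$. Then $\Sigma^j$ contains an open piece of $D$, contradicting $\hsd^2(\Sigma^j\cap\{x_3=0\})=0$. Therefore $W_{\Pi_\gamma}>\pi$. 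I expect the main work to lie in this last paragraph — in particular, invoking the sharp Fraser--Schoen rigidity and making precise the smoothness and multiplicity of the min-max convergence near a generic point of the limiting disc (using the regularity theory underlying Theorem~\ref{thm:EquivMinMax}); the upper bound and the inequality $W_{\Pi_\gamma}\ge\pi$ are comparatively routine once one knows the sweepout is nontrivial.
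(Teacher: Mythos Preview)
Your upper bound is fine and matches the paper. Your argument for $W_{\Pi_\gamma}\ge\pi$ is also correct, though more circuitous than necessary: the volume argument you give for $W_{\Pi_\gamma}>0$ already yields $W_{\Pi_\gamma}\ge\pi$ directly, since the relative isoperimetric profile of $\B^3$ at half–volume equals $\pi$ (realised by the flat equatorial disc). This is precisely the paper's route; invoking Theorem~\ref{thm:EquivMinMax} and the Fraser--Schoen lower bound is redundant here.

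The gap is in your strict inequality. You assert that the min-max sequence $\{\Sigma^j\}$ converges \emph{smoothly, as a single graph}, to the disc $D$ away from finitely many points. This would be true if the $\Sigma^j$ were themselves minimal with bounded index (via curvature estimates), but the min-max sequence produced by Theorem~\ref{thm:EquivMinMax} consists of arbitrary $\pri_2$-equivariant surfaces from the saturation, and its convergence to $\Gamma$ is only in the sense of varifolds. Varifold convergence with multiplicity one of smooth (non-minimal) embedded surfaces does \emph{not} force local $C^1$ graphical convergence away from a finite set: one can, for instance, build sequences with ridges or folds concentrating along a curve in $D$ whose area contribution vanishes in the limit. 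Your reflection argument needs the single-graph description to conclude $g^j\equiv 0$, so it does not go through as stated. The regularity theory underlying Theorem~\ref{thm:EquivMinMax} produces the smooth limit via \emph{replacements}, not by upgrading the convergence of the original sequence.

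The paper avoids this issue entirely by not passing to a min-max limit: it works directly with the sweepout slices, using the \emph{stability} of the relative isoperimetric inequality in $\B^3$ (as in \cite{FranzSchulz2023}*{Theorem~5.1}, part~(II)). Since every element of $\Pi_\gamma$ contains a slice enclosing exactly half the volume, equality $W_{\Pi_\gamma}=\pi$ would force a sequence of such slices with area tending to $\pi$, hence $L^1$-close to a half-ball by stability; one then derives a contradiction at the level of the sweepout itself. If you want to salvage your min-max approach, you would need the surgery/lifting machinery (e.g.\ \cite{FranzSchulz2023}*{Theorem~4.11}) to upgrade the convergence, which is considerably heavier than the isoperimetric-stability route.
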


\begin{proof}
The upper bound follows directly from Proposition~\ref{prop:sweepout}~\ref{prop:sweepout-area}. 
For the lower bound, recall that 
for each $(s,t)\in[0,1]\times\interval{0,1}$, the set $\Sigma_{s,t}$ is a (nonempty) smooth surface which does not contain the north pole $(0,0,1)$ and divides $\B^3$ into two connected components. 
Let $F_{s,t}^\Sigma$ denote the connected component of $\B^3\setminus\Sigma_{s,t}$ which does not contain the north pole. 
Setting $F_{s,0}^\Sigma=\emptyset$ and $F_{s,1}^\Sigma=\B^3$, we obtain a continuous two-parameter family $\{F_{s,t}^\Sigma\}_{(s,t)\in [0,1]^2}$ of finite perimeter subsets of $\B^3$. 

Let $\{\Lambda_{t}\}_{t\in[0,1]}\in\Pi_\gamma$ be arbitrary. 
By Definition~\ref{def:SaturationWidth} there exists 
a smooth map $\Phi\colon[0,1]\times\B^3\to\B^3$, where $\Phi(t,\cdot)$ is a diffeomorphism commuting with the $\pri_2$-action for all $t\in[0,1]$, such that $\Lambda_{t}=\Phi(t,\Sigma_{\gamma(t)})$. 
Let $F_{t}=\Phi(t,F_{\gamma(t)}^\Sigma)$. 
Then, the function $f\colon[0,1]\to\R$ given by 
$f(t)=\hsd^3(F_{t})$ is continuous satisfying $f(0)=0$
and $f(1)=\hsd^3(\B^3)$. 
Hence, there exists $t_\gamma\in\interval{0,1}$ such that 
$f(t_\gamma)=\frac{1}{2}\hsd^3(\B^3)$. 
Since $\Lambda_{t_\gamma}$ is the relative boundary of $F_{t_\gamma}$ in $\B^3$, the isoperimetric inequality 
(cf. \cite[Satz~1]{BokowskiSperner1979}, \cite[Theorem~5]{Ros2005}) implies $\hsd^2(\Lambda_{t_\gamma})\geq\pi$ and hence $W_{\Pi_\gamma}\geq\pi$. 
The strict inequality $W_{\Pi_\gamma}>\pi$ now follows exactly as in the proof of \cite{FranzSchulz2023}*{Theorem~5.1}, part (II) from the stability of the isoperimetric inequality in $\B^3$. 
\end{proof} 

The second lemma in this section describes the behavior of the one-parameter sweepouts associated to the vertical sides $\{0\}\times [0,1]$ and $\{1\}\times [0,1]$ of the two-parameter sweepout $\{\Sigma_{s,t}\}_{(s,t)\in[0,1]^2}$.
Let us recall that a min-max sequence $\{\Sigma^j\}_{j\in\N}$ is called \emph{$\pri_2$-almost minimizing in annuli} if there exists a $\pri_2$-equivariant function $r\colon \B^3\to\R^+$ such that a subsequence of $\{\Sigma^j\}_{j\in\N}$ is $\pri_2$-almost minimizing in $\pri_2$-equivariant annuli centered in any $x\in\B^3$ of radii at most $r(x)$. 
We refer to Definitions~13.2.1, 13.2.2 and 13.2.3 and Proposition~13.5.3 in \cite{Franz2022} for more details. 
Most importantly, it suffices that a min-max sequence is almost minimizing in annuli to prove that its limit is a smooth free boundary minimal surface with topological control (see e.\,g. \cite{Franz2022}*{Proposition~13.5.3}, \cite{FranzSchulz2023}*{Proposition~4.4, Theorem~4.11}). 
 
\begin{lemma}\label{lem:CritCatVerticalSides}
Given $\{\Sigma_{s,t}\}_{(s,t)\in[0,1]^2}$ as in Proposition~\ref{prop:sweepout}, let $\Pi_0$ and $\Pi_1$ be the $\pri_2$-saturations of the one-parameter sweepouts $\{\Sigma_{0,t}\}_{t\in[0,1]}$ and $\{\Sigma_{1,t}\}_{t\in[0,1]}$, respectively. 
Let $\{\Sigma^j\}_{j\in\N}$ be any min-max sequence for $\Pi_0$ (respectively $\Pi_1$) which is $\pri_2$-almost minimizing in annuli. 
Then a subsequence of $\{\Sigma^j\}_{j\in\N}$ converges in the sense of varifolds to the critical catenoid $\K_{x_3}$ around the $x_3$-axis (respectively $\K_{x_2}$ around the $x_2$-axis). 
In particular, $W_{\Pi_0}=W_{\Pi_1}=\hsd^2(\K)$. 
\end{lemma}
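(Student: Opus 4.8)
The plan is to run the equivariant min-max machinery (Theorem~\ref{thm:EquivMinMax}) on the one-parameter saturation $\Pi_0$ (the argument for $\Pi_1$ being identical after permuting coordinates) and then identify the resulting free boundary minimal surface using the topological and symmetry constraints available to us. First I would establish the width bounds: by Proposition~\ref{prop:sweepout}~\ref{prop:sweepout-area} every surface in the sweepout $\{\Sigma_{0,t}\}$ has area strictly below $2\pi$, and since $\{\Sigma_{0,t}\}_{t\in[0,1]}$ is a genuine sweepout of $\B^3$ in the sense that the enclosed volume runs continuously from $0$ to $\hsd^3(\B^3)$, the isoperimetric inequality in $\B^3$ gives $W_{\Pi_0}\geq\pi>0$ exactly as in the proof of Lemma~\ref{lem:width-1}. (One could also simply quote Lemma~\ref{lem:width-1} applied to the curve $\gamma(r)=(0,r)$.) Thus $\pi\leq W_{\Pi_0}<2\pi$, and in particular $W_{\Pi_0}>0$, so Theorem~\ref{thm:EquivMinMax} applies and produces a min-max sequence converging to $\Gamma=m_1\Gamma_1$ with $\Gamma_1$ a connected, embedded, $\pri_2$-equivariant free boundary minimal surface in $\B^3$ (using Remark~\ref{rem:B3}), of $\pri_2$-equivariant index at most $1$, and with $\hsd^2(\Gamma_1)\geq\pi$.

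Next I would pin down the topology of $\Gamma_1$. The surfaces $\Sigma_{0,t}$ have genus zero and two boundary components for $t\in\interval{0,1}$, so $\beta_1(\Sigma_{0,t})=1$ and $\gsum(\Sigma_{0,t})=0$, while $\bsum(\Sigma_{0,t})+\gsum(\Sigma_{0,t})=1$. Note that the saturation does not change these topological invariants (each $\Lambda_t=\Phi(t,\Sigma_{0,t})$ is diffeomorphic to $\Sigma_{0,t}$), so the lower semicontinuity statements \eqref{eq:LscTopI} and \eqref{eq:LscTopII} in Theorem~\ref{thm:EquivMinMax} give $\beta_1(\Gamma)\leq 1$, $\gsum(\Gamma)=0$, and $\bsum(\Gamma)+\gsum(\Gamma)\leq 1$. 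Combined with $\Gamma$ being connected, orientable and having nonempty boundary of area $\geq\pi$, the only possibilities are that $\Gamma_1$ is a disc (with $m_1$ arbitrary) or an annulus (with $m_1=1$, since $\bsum=1$ already saturates the bound and multiplicity would double the boundary complexity count). I would rule out the disc case by a width comparison: the only embedded free boundary minimal disc in $\B^3$ is the equatorial disc of area $\pi$, and a multiplicity-$m_1$ equatorial disc has area $m_1\pi$; since $W_{\Pi_0}<2\pi$ this forces $m_1=1$, i.e.\ $\Gamma$ would be a single equatorial disc. But an equatorial disc cannot be the min-max limit here: one shows the width is strictly larger than $\pi$ (again via stability of the isoperimetric inequality, as in the cited part (II) of the proof of \cite{FranzSchulz2023}*{Theorem~5.1}), so $W_{\Pi_0}>\pi$ and the disc of area $\pi$ is excluded. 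Hence $\Gamma_1$ is an embedded, $\pri_2$-equivariant free boundary minimal annulus with multiplicity one.

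By McGrath's uniqueness theorem \cite{McGrath2018}*{Theorem~1}, the only embedded free boundary minimal annulus in $\B^3$ with prismatic symmetry is the critical catenoid. Now I need to identify which of the three coordinate-axis realizations $\K_{x_1},\K_{x_2},\K_{x_3}$ arises as the limit, and here I would use the orthogonality constraints built into the sweepout. By Proposition~\ref{prop:sweepout}~\ref{prop:sweepout-topo0}, every $\Sigma_{0,t}$ is disjoint from the $x_3$-axis and intersects the $x_1$- and $x_2$-axes orthogonally; this is a closed condition preserved under the $\pri_2$-equivariant diffeomorphisms $\Phi(t,\cdot)$ (which fix each coordinate axis setwise) and hence passes to the varifold limit along a min-max sequence: the limit varifold has no mass meeting the $x_3$-axis in the "wrong" (non-orthogonal) way. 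Of the three critical catenoids, only $\K_{x_3}$, the one whose rotational axis is the $x_3$-axis, is disjoint from the $x_3$-axis and meets the $x_1$- and $x_2$-axes orthogonally. Therefore the subsequential limit is $\K_{x_3}$, and consequently $W_{\Pi_0}=\hsd^2(\K_{x_3})=\hsd^2(\K)$; the same argument with the roles of the $x_2$- and $x_3$-axes interchanged (using Proposition~\ref{prop:sweepout}~\ref{prop:sweepout-topo1}) gives the limit $\K_{x_2}$ and $W_{\Pi_1}=\hsd^2(\K)$ for $\Pi_1$.

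The main obstacle I anticipate is the last step — rigorously arguing that the axis-disjointness and orthogonal-intersection properties of the sweepout surfaces survive the saturation and the varifold limit, so as to distinguish among the three congruent copies of the critical catenoid. One must check that the diffeomorphisms $\Phi(t,\cdot)$ appearing in the saturation, being required to commute with the $\pri_2$-action, necessarily preserve the singular locus stratum by stratum (in particular each coordinate axis is mapped to itself), and then that "disjoint from the $x_3$-axis" is a closed condition in the varifold topology that is stable under the min-max passage to the limit; the orthogonality at the $x_1$- and $x_2$-axes follows automatically from $\pri_2$-equivariance of the limit together with the fact that it is a smooth free boundary minimal annulus meeting those axes. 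The remaining ingredients — the width bounds, the topological lower semicontinuity, and McGrath's uniqueness — are either quoted directly or established by the same arguments already used for Lemma~\ref{lem:width-1}.
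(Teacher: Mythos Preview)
Your argument is essentially correct and parallels the paper's up to the point where you identify the limit $\Gamma$ as \emph{some} critical catenoid (width bounds via Lemma~\ref{lem:width-1}, multiplicity one via Remark~\ref{rem:B3}, topological lower semicontinuity, Nitsche to exclude the disc, then McGrath's uniqueness). One small technical remark: the lemma asks about \emph{any} min-max sequence that is almost minimizing in annuli, so rather than invoking Theorem~\ref{thm:EquivMinMax} (which \emph{produces} such a sequence) you should cite the regularity statement directly, as the paper does; this is cosmetic.

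The genuine gap is in the last step, where you want to single out $\K_{x_3}$ among the three congruent catenoids. Your proposed mechanism --- that ``disjoint from the $x_3$-axis'' is a closed condition under varifold convergence --- does not hold: the $x_3$-axis has zero $\hsd^2$-measure, so varifold convergence is completely insensitive to whether the approximating surfaces meet it. You correctly observe that the $\pri_2$-equivariant diffeomorphisms in the saturation preserve each coordinate axis setwise, so every $\Sigma^j$ is indeed disjoint from the $x_3$-axis; but the surfaces $\Sigma^j$ are not minimal, so no Allard-type argument upgrades the varifold convergence to smooth convergence, and there is nothing preventing $\Sigma^j\to\K_{x_1}$ in the varifold sense while each $\Sigma^j$ avoids the $x_3$-axis.

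The paper fills this gap by an entirely different, topological argument. Assuming for contradiction that $\Gamma=\K_{x_1}$ (say), one takes a thin tubular neighbourhood $U$ of $\Gamma$ whose intersection with $\partial\B^3$ has two annular components $N_1,N_2$, and applies the equivariant surgery/lifting lemma \cite{FranzSchulz2023}*{Theorem~4.11} to push $\Sigma^j$ into $U$. Because $\Sigma^j$ is already an annulus, the complexity bookkeeping forces every surgery to be disconnecting, so the annular core $\hat\Sigma^j\subset U$ inherits the same reflection behaviour on its boundary components as the original $\Sigma^j$. But the two boundary circles of $\Sigma_{0,t}$ (and hence of $\Sigma^j$) are each individually invariant under the reflection that swaps $N_1$ and $N_2$, whereas $\hat\gamma_1\subset N_1$ and $\hat\gamma_2\subset N_2$ are interchanged by that reflection --- a contradiction. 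This reflection-on-boundary-components argument is the missing idea; without it (or some equally robust topological invariant that survives both the saturation and the passage to the varifold limit), the identification of the axis cannot be completed.
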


\begin{proof}
We prove the claim for $\Pi_0$ while the proof for $\Pi_1$ is analogous. 
Lemma~\ref{lem:width-1} states that $\pi<W_{\Pi_0}<2\pi$. 
By \cite{Franz2022}*{Proposition~13.5.3} (see also \cite{FranzSchulz2023}*{Proposition~4.4}) a subsequence of $\{\Sigma^j\}_{j\in\N}$ (which we do not rename) converges in the sense of varifolds with multiplicity one to an embedded, $\pri_2$-equivariant free boundary minimal surface $\Gamma$ in $\B^3$ which has area $\hsd^2(\Gamma)=W_{\Pi_0}$.   
Here we used Remark~\ref{rem:B3} to rule out higher multiplicity given the upper bound on $W_{\Pi_0}$.  
Since $\Gamma$ has greater area than the equatorial disc, $\Gamma$ is not a topological disc by \cite{Nitsche1985}. 
The surface $\Sigma^j$ has genus zero and two boundary components for all $j\in\N$, being isotopic to one of the surfaces $\Sigma_{0,t}$ from Proposition~\ref{prop:sweepout}~\ref{prop:sweepout-topo0}. 
Hence, the topological lower semicontinuity results \cite{FranzSchulz2023}*{Theorems~1.8--9} imply that $\Gamma$ (being not a disc) has the topology of an annulus. 
The uniqueness result \cite{McGrath2018}*{Theorem~1} then implies that $\Gamma$ coincides with one of the three $\pri_2$-equivariant critical catenoids. 
In particular, $W_{\Pi_0}=\hsd^2(\K)$. 

It remains to prove that $\Gamma$ is disjoint from the $x_3$-axis. 
Assume by contradiction that $\Gamma$ is the critical catenoid around the $x_1$-axis (the case around the $x_2$-axis being analogous).
Let $U\subset\B^3$ be a sufficiently thin tubular neighborhood around $\Gamma$ such that $U\cap\partial\B^3$ has two connected components $N_1$ and $N_2$  which are both topological annuli. 
By \cite[Theorem~4.11]{FranzSchulz2023} we can apply a topological surgery procedure to all surfaces $\Sigma^j$ in the min-max subsequence with sufficienlty large $j$, resulting in $\pri_2$-equivariant surfaces $\tilde\Sigma^j\subset U$
such that the sequence $\{\tilde\Sigma^j\}_{j}$ still converges to $\Gamma$ in the sense of varifolds. 

Since $\Sigma^j$ is a topological annulus, \cite{FranzSchulz2023}*{Lemmata~3.4 and~3.6} imply that every connected component of $\tilde\Sigma^j$ has either the topology of an annulus, a disc or a sphere and that at most one of the connected components is a topological annulus. 
(Indeed, $\gsum(\tilde\Sigma^j)=\gsum(\Sigma^j)=0$ and $\bsum(\tilde\Sigma^j)\leq\bsum(\Sigma^j)=1$.) 
Since the limit $\Gamma$ is also a topological annulus, \cite[Theorem~4.11]{FranzSchulz2023} then implies that exactly one connected component $\hat\Sigma^j$ of $\tilde\Sigma^j$ has annular topology. 
(Indeed, $1=\bsum(\Gamma)\leq\bsum(\tilde\Sigma^j)\leq1$.)

We recall that $\hat\Sigma^j$ is obtained from $\Sigma^j$ through surgery in the sense of \cite[Definition~3.1~(c)]{FranzSchulz2023}.
Since both surfaces are topological annuli, \cite{FranzSchulz2023}*{equation (4)} implies that all of the performed surgery operations have to disconnect the surface -- otherwise we would lose the annulus and only discs and spheres would remain. 
As a result, all the individual surgery operations have to act separately on the two boundary components $\gamma_1$ and $\gamma_2$ of $\Sigma^j$ and we may not cut away the half-neck between the two boundary components. 
In particular, the surgery procedure (resulting in $\pri_2$-equivariant surfaces) has to preserve the property that $\gamma_1$ is invariant under reflection $\refl$ with respect to the plane $\{x_3=0\}$. 
However, the two boundary components $\hat\gamma_1,\hat\gamma_2\subset N_1\cup N_2$ of $\hat\Sigma^j$ satisfy $\refl\hat\gamma_1=\hat\gamma_2$ because 
$\refl N_1=N_2$ and we obtain a contradiction. 
\end{proof}

\begin{lemma}
\label{lem:width-2}
Let $\{\Sigma_{s,t}\}_{(s,t)\in[0,1]^2}$ be the $\pri_2$-sweepout of $\B^3$ constructed in Proposition~\ref{prop:sweepout} and let 
$W_\Pi$ be the width of its $\pri_2$-saturation $\Pi$. 
Then \[\hsd^2(\K)<W_\Pi<2\pi.\] 
\end{lemma}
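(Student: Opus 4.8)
The plan is to obtain the upper bound by direct comparison and the strict lower bound by contradiction, via a Lusternik--Schnirelmann argument.

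First I would dispose of the upper bound. Since the family $\{\Sigma_{s,t}\}_{(s,t)\in[0,1]^2}$ is itself an element of its $\pri_2$-saturation $\Pi$, one has $W_\Pi\le\sup_{(s,t)\in[0,1]^2}\hsd^2(\Sigma_{s,t})$, and the construction in Proposition~\ref{prop:sweepout} (see property~\ref{prop:sweepout-area} therein) keeps this supremum strictly below $2\pi$; hence $W_\Pi<2\pi$. Next I would establish the non-strict lower bound $W_\Pi\ge\hsd^2(\K)$: for any $\{\Lambda_{s,t}\}=\{\Phi(s,t,\Sigma_{s,t})\}\in\Pi$, the one-parameter family $\{\Lambda_{0,t}\}_{t\in[0,1]}=\{\Phi(0,t,\Sigma_{0,t})\}_{t\in[0,1]}$ belongs to the $\pri_2$-saturation $\Pi_0$ of $\{\Sigma_{0,t}\}_{t\in[0,1]}$, since each $\Phi(0,t,\cdot)$ is a $\pri_2$-equivariant diffeomorphism depending smoothly on $t$; therefore $\sup_{(s,t)}\hsd^2(\Lambda_{s,t})\ge\sup_{t}\hsd^2(\Lambda_{0,t})\ge W_{\Pi_0}=\hsd^2(\K)$ by Lemma~\ref{lem:CritCatVerticalSides}, and passing to the infimum over $\Pi$ gives $W_\Pi\ge\hsd^2(\K)$.

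The heart of the matter is to exclude the equality $W_\Pi=\hsd^2(\K)$. Assuming it, I would first observe that the one-parameter family obtained by restricting $\{\Sigma_{s,t}\}$ to $\partial([0,1]^2)$ has a $\pri_2$-saturation of width exactly $\hsd^2(\K)$: the top and bottom sides carry surfaces of zero area by Proposition~\ref{prop:sweepout}~\ref{prop:sweepout-area}; the restriction to the left-hand side lies in the class $\Pi_0$, so the width is at least $W_{\Pi_0}=\hsd^2(\K)$ by Lemma~\ref{lem:CritCatVerticalSides} (the left side being in fact the optimal sweepout of Lemma~\ref{lem:catenoid}~\ref{lem:catenoid-iii}); and the width cannot exceed $W_\Pi=\hsd^2(\K)$. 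Thus the two-parameter width equals the width of the boundary restriction, and the Lusternik--Schnirelmann argument for equivariant min-max (as in \cite{Ketover2022}, refining \cite{MarquesNeves2017}, within the framework of Section~\ref{sec:minmax}) applies. It yields a curve $\gamma\colon[0,1]\to[0,1]^2$ running from the bottom side to the top side with $\gamma((0,1))\subset(0,1)^2$, together with a min-max sequence for the $\pri_2$-saturation $\Pi_\gamma$ of $\{\Sigma_{\gamma(r)}\}_{r\in[0,1]}$ that is $\pri_2$-almost minimizing in annuli and converges, in the sense of varifolds, to a $\pri_2$-equivariant free boundary minimal surface $\Gamma\subset\B^3$ with $\hsd^2(\Gamma)=\hsd^2(\K)$, $\pri_2$-equivariant index at most $1$, and --- crucially --- \emph{not} equal to a critical catenoid. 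Proposition~\ref{prop:IsolatedCritCat} is what licenses this last point: if the only $\pri_2$-equivariant free boundary minimal surfaces of area $\hsd^2(\K)$ were the three mutually separated catenoids $\K_{x_1},\K_{x_2},\K_{x_3}$, then $W_\Pi=\hsd^2(\K)$ would be incompatible with the fact (Lemma~\ref{lem:CritCatVerticalSides}) that the two vertical sides of the two-parameter family produce the \emph{distinct} limits $\K_{x_3}$ and $\K_{x_2}$.

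Finally I would reach a contradiction from the properties of $\Gamma$. By Remark~\ref{rem:B3} the surface $\Gamma$ is connected and orientable, and since $\hsd^2(\Gamma)=\hsd^2(\K)>\pi$ it is not a flat disc. The min-max sequence consists of surfaces isotopic to $\Sigma_{\gamma(r)}$ with $\gamma(r)\in(0,1)^2$ (the relevant parameters stay away from the endpoints of $\gamma$, where the areas vanish while $\hsd^2(\Sigma^j)\to\hsd^2(\K)>0$), hence of genus one with two boundary components; the topological lower semicontinuity \eqref{eq:LscTopII} then gives $\gsum(\Gamma)+\bsum(\Gamma)\le 2$, i.e.\ the genus of $\Gamma$ plus the number of its boundary components is at most $3$. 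By Lemma~\ref{lem:boundary} this number is even, hence equal to $2$, so the genus is $0$ or $1$; since $\Gamma$ is not a critical catenoid, \cite{McGrath2018}*{Theorem~1} rules out the annulus, and therefore $\Gamma$ has genus one and two boundary components. But then Proposition~\ref{prop:EquivIndexMoreThan2} forces the $\pri_2$-equivariant index of $\Gamma$ to be at least $2$, contradicting the bound $\le 1$ above; hence $W_\Pi>\hsd^2(\K)$. The step I expect to be the main obstacle is the Lusternik--Schnirelmann argument itself --- turning the equality of the two-parameter width with the width of the boundary restriction into a one-parameter min-max limit at level $\hsd^2(\K)$ genuinely distinct from the isolated critical catenoids, so that the topological control of Theorem~\ref{thm:EquivMinMax} and Proposition~\ref{prop:EquivIndexMoreThan2} can be brought to bear.
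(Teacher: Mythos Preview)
Your overall strategy matches the paper's: the upper bound and the non-strict lower bound are handled the same way, and the contradiction (assuming $W_\Pi=\hsd^2(\K)$) is obtained by extracting a one-parameter family along a curve through $[0,1]^2$, running min-max to get a limit of equivariant index at most $1$, and ruling out every possible topology via Lemma~\ref{lem:boundary}, \cite{McGrath2018}*{Theorem~1}, Nitsche's theorem, and Proposition~\ref{prop:EquivIndexMoreThan2}. That topological endgame is precisely the paper's Case~1.

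The gap is the Lusternik--Schnirelmann step, which you correctly flag but do not resolve. You assert that the LS argument yields a curve $\gamma$ and a min-max limit $\Gamma$ with $\hsd^2(\Gamma)=\hsd^2(\K)$ that is not a critical catenoid; neither claim is justified. Nothing forces $W_{\Pi_\gamma}=\hsd^2(\K)$ --- a priori it lies only in $(\pi,\hsd^2(\K)]$ by Lemma~\ref{lem:width-1} --- and the paper makes this dichotomy explicit. It does not fix a single curve: for a pull-tight minimizing sequence $\{\Sigma^j_{s,t}\}$ and each $j$, it constructs a curve $\gamma^j$ from the bottom side to the top side avoiding the three parameter sets $\mathcal{C}_1^j,\mathcal{C}_2^j,\mathcal{C}_3^j$ where $\Sigma^j_{s,t}$ is $\varepsilon$-close (in $\vard$) to $\mathcal{X}\setminus\{\K_{x_2},\K_{x_3}\}$, to $\K_{x_2}$, or to $\K_{x_3}$ respectively, with $\mathcal{X}$ the set of $\pri_2$-equivariant free boundary minimal surfaces of area $\hsd^2(\K)$ (Proposition~\ref{prop:IsolatedCritCat} keeps these sets well-separated). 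If $W_{\Pi_{\gamma^j}}<\hsd^2(\K)$ (Case~1), the limit is trivially not a catenoid and your topological endgame applies. If $W_{\Pi_{\gamma^j}}=\hsd^2(\K)$ (Case~2), the paper does \emph{not} invoke the topological argument: the slices along $\gamma^j$ already form a minimizing sequence, from which one extracts an almost-minimizing min-max sequence converging to some $\Gamma_3\in\mathcal{X}$; the design of the sets (with $\mathcal{C}_2^j$ defined only for $s>0$ and $\mathcal{C}_3^j$ only for $s<1$) forces the relevant parameters onto a vertical side, and Lemma~\ref{lem:CritCatVerticalSides} then pins the limit to $\K_{x_3}$ (respectively $\K_{x_2}$), landing those parameters back in $\mathcal{C}_3^j$ (respectively $\mathcal{C}_2^j$) --- contradicting the construction of $\gamma^j$. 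Your heuristic that the two vertical sides ``produce distinct limits $\K_{x_3}$ and $\K_{x_2}$'' points the right way, but it is Lemma~\ref{lem:CritCatVerticalSides} combined with the $j$-dependent avoidance construction --- not a black-box LS principle --- that turns this into a proof.
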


\begin{proof}
The upper bound follows again directly from Proposition~\ref{prop:sweepout}~\ref{prop:sweepout-area}. 
Lemma~\ref{lem:CritCatVerticalSides} implies the lower bound $W_\Pi\geq W_{\Pi_0} = W_{\Pi_1}=\hsd^2(\K)$. 
It remains to prove that the inequality is strict.  

Towards a contradiction we assume that $W_\Pi=\hsd^2(\K)$ and follow the methodology from \cite{Ketover2022}*{pp.~16--17}.
Let $\bigl\{\{\Sigma^j_{s,t}\}_{(s,t)\in[0,1]^2}\bigr\}_{j\in\N}$ be a minimizing sequence in $\Pi$. 
By \cite{Ketover2016Equivariant}*{Proposition~3.9} (see also \cite{Franz2022}*{Proposition~5.2}), 
we can assume that the sequence is pull-tight, namely that every min-max sequence $\{\Sigma^j_{s_j,t_j}\}_{j\in\N}$ (i.\,e.~a sequence such that $\hsd^2(\Sigma^j_{s_j,t_j})\to W_\Pi$ as $j\to\infty$) converges (up to a subsequence) in the sense of varifolds to a stationary varifold.

Let $\mathcal{X}$ be the set of embedded, $\pri_2$-equivariant  free boundary minimal surfaces in $\B^3$ with area equal to $\hsd^2(\K)$ and let $\vard$ denote a distance metrizing  varifold convergence (cf.~Definition~\ref{defn:varifolds}).
By Proposition~\ref{prop:IsolatedCritCat}, there exists $\varepsilon>0$ such that the $\vard$-distance between the critical catenoid and any other $\pri_2$-equivariant free boundary minimal surface is at least $3\varepsilon$.  
Given such $\varepsilon>0$ and any $j\in\N$, we consider the following three pairwise disjoint subsets of $[0,1]^2$ (see  Figure~\ref{fig:square}).
\begin{align}\notag
\mathcal{C}_1^j&=\{(s,t)\in\interval{0,1}\times[0,1]\st\vard(\Sigma_{s,t}^j,\mathcal{X}\setminus\{\K_{x_2},\K_{x_3}\})\leq\varepsilon\},
\\\notag
\mathcal{C}_2^j&=\{(s,t)\in\intervaL{0,1}\times[0,1]\st\vard(\Sigma_{s,t}^j,\K_{x_2})\leq\varepsilon\},
\\\label{eqn:C3j}
\mathcal{C}_3^j&=\{(s,t)\in\Interval{0,1}\times[0,1]\st\vard(\Sigma_{s,t}^j,\K_{x_3})\leq\varepsilon\}.
\end{align}
By choice of $\varepsilon$, these sets have positive distance between each other.
Note also the different ranges for $s$ in the definitions. 
In particular, we have that
\begin{itemize} 
\item $\mathcal{C}_1^j,\mathcal{C}_2^j,\mathcal{C}_3^j$ are all disjoint from the horizontal sides $[0,1]\times\{0\}$ and $[0,1]\times\{1\}$;
\item $\mathcal{C}_1^j,\mathcal{C}_2^j$ are disjoint from the vertical side $\{0\}\times[0,1]$;
\item $\mathcal{C}_1^j,\mathcal{C}_3^j$ are disjoint from the vertical side $\{1\}\times[0,1]$. 
\end{itemize}

\begin{figure}
\centering 
\begin{tikzpicture}[baseline={(0,0)},scale=5,line cap=round,line join=round,semithick]
\fill[black!10](0,0)rectangle(1,1);
\begin{scope}[black!40]
\fill[smooth]
plot[tension=1] coordinates{
(0,0.6)
(0.2,0.7)
(0.35,0.57)
(0,0.5)
};
\fill[smooth]
plot[tension=1] coordinates{
(1,0.8)
(0.85,0.75)
(0.6,0.67)
(1,0.57)
};
\fill[smooth cycle]
plot[tension=1] coordinates{
(0.95,0.36) 
(0.75,0.46)
(0.85,0.56)
};
\fill[smooth cycle]
plot[tension=1] coordinates{
(0.45,0.2)
(0.6,0.2)
(0.5,0.4)
(0.3,0.25)
};
\end{scope}
\draw(0.45,0.2)node[above]{$\mathcal{C}^j_1$};
\draw(0.2,0.7)node[below]{$\mathcal{C}^j_3$};
\draw(0.85,0.75)node[below]{$\mathcal{C}^j_2$};
\draw plot[bullet](0.3,0)coordinate(g0)node[below]{$\gamma^j(0)$};
\draw plot[bullet](0.4,1)coordinate(g1)node[above]{$\gamma^j(1)$};
\draw[smooth,very thick]
plot[tension=1] coordinates{
(g0)
(0.25,0.33)
(0.5,0.66)
(g1)
};
\draw(0,0)rectangle(1,1);
\draw[->](0,0)--(1.12,0)node[above=1ex,inner sep=0]{$s$~};
\draw[->](0,0)--(0,1.12)node[below right,inner sep=0]{~$t$};
\draw plot[plus] (0,0)node[below]{$0$};
\draw plot[vdash](1,0)node[below]{$1$};
\draw plot[hdash](0,1)node[left]{$1$};
\end{tikzpicture}
\caption{Sets of parameters $(s,t)\in[0,1]^2$ where slices $\Sigma^j_{s,t}$ of the minimizing sequence are close to a surface in $\mathcal{X}$. }%
\label{fig:square}% 
\end{figure}
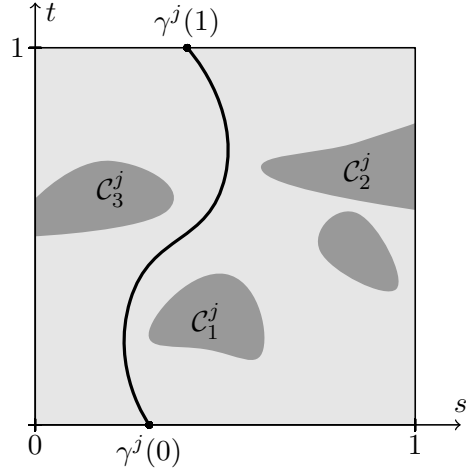

As a result, there exists a smooth curve $\gamma^j\colon[0,1]\to[0,1]^2$
$\setminus(\mathcal{C}_1^j\cup\mathcal{C}_2^j\cup\mathcal{C}_3^j)$
such that $\gamma^j(0)\in[0,1]\times\{0\}$ and $\gamma^j(1) \in[0,1]\times\{1\}$. 
In particular, given $r\in[0,1]$, we have the implication 
\begin{align}\label{eqn:20240903}
\vard(\Sigma_{\gamma^j(r)}^j,\mathcal{X})\leq\varepsilon
~\Rightarrow~\gamma^j(r)\in\{0,1\}\times[0,1].
\end{align}
Let $\Pi_2=\Pi_{\gamma^j}$ be the $\pri_2$-saturation of the one-parameter sweepout $\{\Sigma^j_{\gamma^j(r)}\}_{r\in[0,1]}$ and let $W_{\Pi_2}$ be its width. 
Lemma~\ref{lem:width-1} yields 
$\pi<W_{\Pi_2}$.  
Moreover, $W_{\Pi_2} \leq W_\Pi$ by definition. 
Recalling our assumption $W_\Pi=\hsd^2(\K)$, we distinguish two cases, both leading to contradictions. 
\begin{enumerate}[wide, label={\itshape Case \arabic*:},ref=\arabic*]
\item\label{proof-case1} $W_{\Pi_2}<\hsd^2(\K)$. 
Since $W_{\Pi_2}>0$ we may apply Theorem~\ref{thm:EquivMinMax} to obtain a $\pri_2$-equivariant free boundary minimal surface $\Gamma_2\subset\B^3$ with $\pri_2$-equivariant index at most $1$, satisfying 
$\gsum(\Gamma_2)\leq1$ and 
$\bsum(\Gamma_2)+\gsum(\Gamma_2)\leq2$, 
where we recall Remark~\ref{rem:complexities}. 
This leaves the following possibilities for the topology of $\Gamma_2$. 
\begin{itemize}
\item $(\gsum,\bsum)(\Gamma_2)=(1,1)$. 
Then $\Gamma_2$ has genus one, two boundary components and $\pri_2$-equivariant index equal to $1$ in contradiction with  Proposition~\ref{prop:EquivIndexMoreThan2}.
\item $(\gsum,\bsum)(\Gamma_2)=(1,0)$. 
Then $\Gamma_2$ has genus one with connected boundary, contradicting Lemma~\ref{lem:boundary}.
\item $(\gsum,\bsum)(\Gamma_2)=(0,2)$. 
Then $\Gamma_2$ has three boundary components contradicting Lemma~\ref{lem:boundary}.
\item $(\gsum,\bsum)(\Gamma_2)=(0,0)$. 
Then $\Gamma_2$ is a free boundary minimal disc with area $\hsd^2(\Gamma_2)=W_{\Pi_2}>\pi$ contradicting Nitsche's \cite{Nitsche1985} uniqueness result.  
\item $(\gsum,\bsum)(\Gamma_2)=(0,1)$. 
Then, $\Gamma_2$ is a $\pri_2$-equivariant free boundary minimal annulus with area $\hsd^2(\Gamma_2)<\hsd^2(\K)$ contradicting McGrath's \cite{McGrath2018}*{Theorem~1} uniqueness result.  
\end{itemize}

\item\label{proof-case2} $W_{\Pi_2} = \hsd^2(\K)$. 
Then $\{\{\Sigma^j_{\gamma^j(r)}\}_{r\in[0,1]}\}_{j\in\N}$ is a minimizing sequence in $\Pi_2$. Indeed, recalling that $\bigl\{\{\Sigma^j_{s,t}\}_{(s,t)\in[0,1]^2}\bigr\}_{j\in\N}$ is a minimizing sequence in $\Pi$, we have
\[
\hsd^2(\K)=W_\Pi=\lim_{j\to\infty}\sup_{(s,t)\in[0,1]^2} \hsd^2(\Sigma^j_{s,t})
\geq\lim_{j\to\infty} \sup_{r\in[0,1]} \hsd^2(\Sigma^j_{\gamma^j(r)})
\geq W_{\Pi_2}=\hsd^2(\K)
\]
and therefore all inequalities are equalities in this case.

By \cite{ColdingGabaiKetover2018}*{Lemma~A.1} (see also \cite{Franz2022}*{Lemma~5.6}), $\{\{\Sigma^j_{\gamma^j(r)}\}_{r\in[0,1]}\}_{j\in\N}$ contains a min-max sequence $\{\Sigma^j\}_{j\in\N}$, which is $\pri_2$-almost minimizing in annuli, converging in the sense of varifolds to a free boundary minimal surface $\Gamma_3$ with area equal to $W_{\Pi_2} = \hsd^2(\K)$. 
In particular, $\Gamma_3\in\mathcal{X}$ and hence $\vard(\Sigma^j,\mathcal{X}) \leq\vard(\Sigma^j,\Gamma_3)\le \varepsilon$ 
provided that $j$ is sufficiently large. 
By \eqref{eqn:20240903} we have $\Sigma^j=\Sigma^j_{(0,t_j)}$ or $\Sigma^j=\Sigma^j_{(1,t_j)}$ for some $t_j\in[0,1]$. 
Up to extracting a subsequence we may assume (without loss of generality) that the first case holds for all $j$. 
Then, since $\{\Sigma^j_{(0,t_j)}\}_{j}$ is a min-max sequence that is $\pri_2$-almost minimizing in annuli, Lemma~\ref{lem:CritCatVerticalSides} implies that it converges to $\K_{x_3}$. 
However, recalling \eqref{eqn:C3j}, this contradicts the fact that $(0,t_j)\notin\mathcal{C}_3^j$ for all $j$. 
\qedhere
\end{enumerate}
\end{proof}

\begin{proof}[Proof of Theorem~\ref{thm:ExistenceGenusOneCatenoid}] 
Let $\{\Sigma_{s,t}\}_{(s,t)\in[0,1]^2}$ be the $\pri_2$-sweepout of $\B^3$ from Proposition~\ref{prop:sweepout} and let $\Pi$ be its $\pri_2$-saturation as defined in Definition~\ref{def:SaturationWidth}.
By Lemma~\ref{lem:width-2}, $\hsd^2(\K)<W_\Pi<2\pi$. 
Thus, recalling Remark~\ref{rem:B3}, we may apply Theorem~\ref{thm:EquivMinMax} to obtain the existence of a min-max sequence $\{\Sigma^j\}_{j\in\N}$ converging with multiplicity one to an embedded, $\pri_2$-equivariant free boundary minimal surface $\Gamma$ in $\B^3$ with area 
$\hsd^2(\Gamma)=W_\Pi$ and $\pri_2$-equivariant index at most~$2$.

Every surface in the min-max sequence $\{\Sigma^j\}_{j\in\N}$ has two boundary components and at most genus one, 
i.\,e.~$\bsum(\Sigma^j)=1$ and $\gsum(\Sigma^j)\leq1$, recalling Remark~\ref{rem:complexities}. 
By the topological lower semicontinuity result stated in Theorem~\ref{thm:EquivMinMax}~\eqref{eq:LscTopI}--\eqref{eq:LscTopII}, 
we obtain $\gsum(\Gamma)\leq1$ and $\bsum(\Gamma)+\gsum(\Gamma)\leq2$. 
With the same arguments as in the the proof of Lemma~\ref{lem:width-2}, we can rule out the cases $(\gsum,\bsum)(\Gamma)\in\{(1,0),(0,2),(0,0)\}$. 
Similarly, if $(\gsum,\bsum)(\Gamma)=(0,1)$, then $\Gamma$ is a $\pri_2$-equivariant free boundary minimal annulus with area $\hsd^2(\Gamma)=W_\Pi>\hsd^2(\K)$, contradicting \cite{McGrath2018}*{Theorem~1}.
The only remaining possibility is $(\gsum,\bsum)(\Gamma)=(1,1)$. 
Therefore, $\FKS\vcentcolon=\Gamma$ has genus one and two boundary components as claimed.

Finally, by Proposition~\ref{prop:Steklov}, the first nonzero Steklov eigenvalue of $\FKS$ is equal to $1$ and Proposition~\ref{prop:EquivIndexMoreThan2} implies that the $\pri_2$-equivariant index of $\FKS$ is at least $2$ and hence equal to $2$.  
\end{proof}

%===== BIBLIOGRAPHY ============================================================
 
\bibliography{fbms-bibtex} 
\printaddress

\end{document}